\documentclass[12pt]{amsart}
\usepackage{setspace}
\usepackage{amssymb,color}
\usepackage{graphicx,hyperref}
\usepackage{enumerate}
\usepackage{multirow}
\usepackage[
 margin=2 cm,
 includefoot,
 footskip=30pt,
]{geometry}

\newtheorem{thm}{Theorem}[section]
\newtheorem{prop}[thm]{Proposition}
\newtheorem{lem}[thm]{Lemma}

\newtheorem{defn}[thm]{Definition}

\theoremstyle{definition}

\newtheorem{example}[thm]{Example}

\theoremstyle{remark}


\newcommand\less{<_{\mathrm{alt}}}
\newcommand\lesseq{\le_{\mathrm{alt}}}
\newcommand\gess{>_{\mathrm{alt}}}
\newcommand\gesseq{\ge_{\mathrm{alt}}}
\renewcommand{\S}{\mathcal{S}}
\newcommand{\C}{\mathcal{C}}
\DeclareMathOperator{\Al}{Allow}
\newcommand{\B}{\overline{B}}
\newcommand{\N}{\overline{N}}
\renewcommand{\b}{\bar{b}}
\renewcommand{\P}{\overline{P}}
\newcommand{\Tb}{T_{-\beta}}
\newcommand{\Sb}{\Sigma_{-\beta}}
\newcommand{\Wb}{\mathcal{W}_{-\beta}} 
\newcommand{\WN}{\mathcal{W}_{N}}
\DeclareMathOperator{\asc}{asc}
\DeclareMathOperator{\Pat}{Pat}
\newcommand\wmax{\Omega_\beta}
\newcommand\wmin{\omega_\beta}

\newcommand \wend{\nu_{\beta}} 

\newcommand\wmaxn{((N{-}1)0)^\infty}    
\newcommand\wminn{(0(N{-}1))^\infty}   
\newcommand \betaone{ \mathfrak{a}_{\beta}}

\title{Patterns of negative shifts and beta-shifts}
\author{Sergi Elizalde}%
\email{sergi.elizade@dartmouth.edu}%
\author{Katherine Moore}%
\email{katherine.e.moore.gr@dartmouth.edu}%
\address{Department of Mathematics, Dartmouth College, Hanover, NH 03755, USA}%

\begin{document}

\maketitle

\begin{abstract}

The $\beta$-shift is the transformation from the unit interval to itself that maps $x$ to the fractional part of $\beta x$.
Permutations realized by the relative order of the elements in the orbits of these maps have been studied in~\cite{Elishifts} for positive
integer values of $\beta$ and in~\cite{Elibeta} for real values $\beta>1$. In both cases, a combinatorial description of the smallest positive value of $\beta$ needed to realize a permutation is provided. 
In this paper we extend these results to the case of negative $\beta$, both in the integer and in the real case. Negative $\beta$-shifts are related to digital expansions with negative real bases, studied by Ito and Sadahiro~\cite{ItoS}, and Liao and Steiner~\cite{LS}. 
\end{abstract}

\section{Introduction}

The study of the permutations realized by the one-dimensional dynamical systems provides a important tool to distinguish random from deterministic time series, as well as a combinatorial method to compute the topological entropy of the dynamical system. 

If $X$ is a linearly ordered set, $f:X\to X$ a map, and $x\in X$, we can consider the finite sequence $x,f(x),f(f(x)),\dots,f^{n-1}(x)$. If these $n$ values are different, then their relative order determines a permutation $\pi\in\S_n$, obtained by replacing the smallest value by a 1, the second smallest by a 2, and so on. 
We write $\Pat(x,f,n)=\pi$, and we say that $\pi$ is an {\em allowed pattern} of $f$, or that $\pi$ is {\em realized} by $f$, and also that $x$ induces $\pi$. If there are repeated values in the first $n$ iterations of $f$ starting with $x$, then $\Pat(x,f,n)$ is not defined. The set of allowed patterns of $f$ is
$$\Al(f)=\bigcup_{n\ge0}\{\Pat(x,f,n):x\in X\}.$$

It was shown in~\cite{Bandt} that if $X$ is an interval of the real line and $f$ is a piecewise monotone map, then there are some permutations that are not realized by $f$, called the forbidden patterns of $f$. Additionally, the growth rate of the sequence that counts allowed patterns by length gives the topological entropy of $f$, which is a measure of the complexity of the associated dynamical system.

Determining the set of allowed patterns for particular families of maps is a difficult problem in general, and an active area of research. In recent years it has been solved for shift maps~\cite{AEK,Elishifts} and for $\beta$-shifts~\cite{Elibeta}, and there has been some progress for signed shifts~\cite{Amigosigned,ArcEli,KArc} and logistic maps~\cite{Eliu}.

Shift maps can be described as maps of the form $f:[0,1]\to[0,1]$, $f(x)=\{Nx\}$, where $N$ is a positive integer and $\{y\}=y-\lfloor y \rfloor$ denotes the fractional part of $y$. They can also be interpreted as shifts of infinite words on an $N$-letter alphabet, where the linear order on the set is the lexicographic order. In~\cite{Elishifts}, a simple formula is given to determine, for a given permutation $\pi$, the smallest positive integer $N$ such that $\pi$ is realized by the shift on $N$ letters. This formula is then used to count the number of permutation of a given length realized by such a shift.

A natural generalization of shifts are $\beta$-shifts, which are the maps obtained when we replace $N$ by a an arbitrary real number $\beta>1$. They have their origin in the study of expansions of real numbers in an arbitrary real base $\beta>1$, introduced by R\'enyi~\cite{Ren} (see also~\cite{Par}). 
In~\cite{Elibeta}, a method is given to compute, for a given permutation $\pi$, the smallest positive real number $B(\pi)$ such that $\pi$ is realized by the $\beta$-shift for all $\beta>B(\pi)$. This number is called the {\em shift-complexity} of $\pi$ in~\cite{Elibeta}.

Signed shifts are a different generalization of shift maps, where some of the slopes in the graph of $f$ are allowed to be negative. The tent map is a particular case of a signed shift, but no formula is known for the number of its allowed patterns of a given length.
The only case of a signed shift (other than the one with all positive slopes) for which the number of allowed patterns is known 
is when all the slopes are negative. With the above definition of fractional part, these negative shifts can be defined as $f(x)=\{-Nx\}$ for an integer $N\ge2$. The enumeration of allowed patterns is solved in~\cite{KArc} for $N=2$ and in~\cite{EMupcoming} for the general case.

In this paper we focus on a variation of $\beta$-shifts, called {\em negative $\beta$-shifts}. For $\beta\in\mathbb{R}_{>1}$, the $-\beta$-transformation is defined as
\begin{equation}\label{eq:Tb}
\Tb : (0, 1] \rightarrow (0, 1],\quad x \mapsto -\beta x+\lfloor\beta x \rfloor+1=1-\{\beta x\}.
\end{equation}
The graph of $T_{-(1+\sqrt{2})}$ is shown in Figure~\ref{fig:Tb}. The map $\Tb$ map agrees in all but a finite set of points with the transformation $x\mapsto \{-\beta x\}$ from $[0,1)$ to itself, which has been studied in~\cite{Gora}.
\begin{figure}[h]
\centering
\includegraphics[totalheight=55mm]{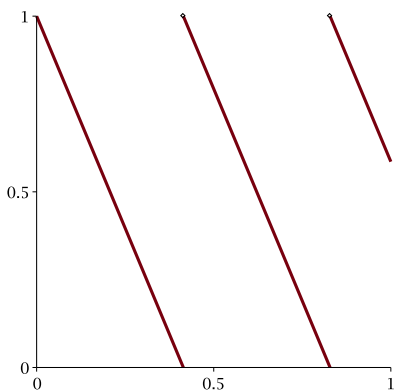}
\caption{The graph of $\Tb$ for $\beta=1+\sqrt{2}$.}
\label{fig:Tb}
\end{figure}

We will see that, as we increase $\beta$, the set of allowed patterns of $\Tb$ grows (in the sense of containment), analogously to the situation for the regular $\beta$-shift.
Given a permutation $\pi$, our goal is to find the smallest value  $\B(\pi)$ such that $\pi\in\Al(T_{-\beta})$ for all $\beta>\B(\pi)$. Our approach is similar to the one used in~\cite{Elibeta} for the positive $\beta$-shift, but there are some intricacies that appear only in the negative case. In particular, one has to consider several different cases depending on the shape of $\pi$.

Negative $\beta$-shifts are closely related to digital expansions with negative real bases, which were introduced by Ito and Sadahiro~\cite{ItoS}. Liao and Steiner studied dynamical properties of the transformation $\Tb$ in~\cite{LS}. More recently, Steiner~\cite{Steiner} characterized the sequences that occur as the digital expansions of $1$  with base $-\beta$ for some $\beta>1$, which is important when determining what sequences are admissible as $-\beta$-expansions (in analogy with Parry's work for the positive case~\cite{Par}).

An important special case of negative $\beta$-shifts, which is also a particular case of signed shifts, occurs when $\beta$ is an integer, $\beta=N\ge2$. In Section~\ref{sec:-N} we study this map, and we determine, for a given permutation $\pi$,  the smallest value of $N\ge2$ such that $\pi$ is realized by the corresponding negative shift.
In Section~\ref{sec:beta} we move to the case of real $\beta$, and we consider the sequences that can be obtained as representations of real numbers in base $-\beta$, in order to interpret negative $\beta$-shifts as shifts on infinite words in a certain set $\Wb$. In Section~\ref{sec:words} we give a construction that, for a given permutation $\pi$, provides a word in $\Wb$ that induces $\pi$  and represents a number in base $-\beta$ for the smallest possible $\beta$. Finally, in Section~\ref{sec:B} we provide a formula for the number $\B(\pi)$ described above as the largest root of a certain polynomial.

In the rest of the paper, $\pi$ denotes a permutation in the symmetric group $\S_n$.

We remark that after an earlier version of this paper was posted on \texttt{arxiv.org}, we were informed by Charlier and Steiner that they have independently obtained similar results~\cite{CSpreprint}.

\section{The reverse shift}\label{sec:-N}

When $\beta$ is an integer, which we denote by $\beta=N\ge2$, we give a slightly different definition of the negative shift. Let 
$$ M_{-N} : [0, 1] \rightarrow [0, 1],\quad x \mapsto \left\{
\begin{array}{ll}
     1-\{Nx\} & \text{ if } x \in [0, 1), \\
         0 & \text{ if } x = 1, \\
\end{array} 
\right. $$
and call this map the {\em reverse shift}. 
Note that $M_{-N}(x)=T_{-N}(x)$ for all $x\in(0,1)$, and so $\Al(M_{-N})=\Al(T_{-N})$. We choose to use the map $M_{-N}$ for consistency with the definition of signed shifts used in \cite{Amigosigned,ArcEli}, and also to avoid the isolated point $T_{-N}(1)=1$.

For an integer $N\ge2$, let $\WN$ be the set of infinite words on the alphabet $\{0,1,\dots,N{-}1\}$, equipped with
the {\em alternating lexicographic order}, which is defined by $v_1v_2\dots \less w_1w_2\dots$ if there exists some $i$ such that $v_j = w_j$ for all $j < i$ and $(-1)^i(v_i -w_i)>0$.
Let $\Sigma_{-N}$ be the shift map on $(\WN,\less)$, defined as $\Sigma_{-N}(w_1 w_2 w_3 \dots ) = w_2 w_3 \dots$ for $w\in\WN$. 

Throughout this paper, we write $w=w_1w_2\dots$ and use the notation $w_{[k,h]}=w_kw_{k+1}\dots w_{h}$ and $w_{[k, \infty)} = w_k w_{k+1} \dots$.  If $d$ is a finite word, then $d^m$ denotes concatenation of $d$ with itself $m$ times, and $d^{\infty}$ denotes the corresponding infinite periodic word.  We say that a finite word $d$ is primitive if it cannot be written as a power of any proper subword, i.e., it is not of the form $d = a^m$ for any $m > 1$.  Equivalently, it is well known that a word $d$ is primitive if it is not equal to any of its non-trivial cyclic shifts.

Let
\begin{equation}\label{eq:WN0}
\WN^0 =  \WN \setminus \{w: w= w_1 w_2 \dots w_{k} (0(N{-}1))^\infty  \text{ and } w_k \neq N{-}1, \text{ for some }k\ge 1\},
\end{equation}
which is closed under shifts. The map $\Sigma_{-N}$ restricted to $(\WN^0,\less)$ is order-isomorphic to the map $M_{-N}$ on $([0,1],<)$, via the order-isomorphism $\psi: \WN^0\mapsto [0, 1]$ defined by $\psi(w_1 w_2 \dots ) = - \sum_{j = 1}^\infty \frac{w_j + 1}{(-N)^j}$.    

To see that $\psi$ is an order-isomorphism, let us first show that $M_{-N} \circ \psi = \psi \circ \Sigma_{-N}$ on $\WN^0$.
If $\psi(w_1 w_2 w_3 \dots ) \neq 1$,
\begin{align*}
M_{-N} \circ \psi(w_1 w_2 w_3 \dots) & =  M_{-N}\left( - \sum_{j = 1}^\infty \frac{w_j + 1}{(-N)^j} \right) 
=  1 - \left\{ N \left(-\sum_{j = 1}^\infty \frac{w_j + 1}{(-N)^j}\right) \right\} \\
& =  1 - \left\{  w_1 + 1 + \sum_{j = 1}^\infty \frac{w_{j+1} + 1}{(-N)^j} \right\}
= 1 - \left( 1 + \sum_{j = 1}^\infty \frac{w_{j+1} + 1}{(-N)^j} \right) \\
& =  -\sum_{j =1}^\infty \frac{w_{j+1} + 1}{(-N)^j}   
= \psi(w_2 w_3 w_4\dots)
= \psi \circ \Sigma_{-N}(w_1 w_2 w_3 \dots).
\end{align*}
If $\psi(w_1 w_2 w_3 \dots) = 1$, then $w_1 w_2 w_3 \dots = ((N{-}1) 0)^\infty$, and in this case
$$M_{-N} \circ \psi(w_1 w_2 w_3 \dots)  = M_{-N}(1) = 0$$
and 
$$\psi \circ \Sigma_{-N} (w_1 w_2 w_3 \dots ) = \psi(w_2 w_3 w_4 \dots) = \psi((0 (N{-}1))^{\infty}) = 0.$$

Next we show that $\psi$ is order-preserving.

\begin{lem} Let $v,w\in \WN^0$. If $v \less w$ , then $\psi(v) < \psi(w).$ \end{lem}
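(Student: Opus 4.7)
The plan is to expand $\psi(w) - \psi(v)$ as a single convergent series starting at the first position where $v$ and $w$ differ, and then show that a positive leading term beats the geometric-series bound on the remaining tail.

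To set up, I would let $i$ be the smallest index with $v_i \ne w_i$; by definition of $\less$, $(-1)^i(v_i - w_i) > 0$. Since the first $i-1$ terms of $\psi(v)$ and $\psi(w)$ cancel and $(-N)^{-j} = (-1)^j/N^j$, this gives
\[
\psi(w) - \psi(v) = \sum_{j = i}^{\infty} \frac{(-1)^j(v_j - w_j)}{N^j}.
\]
The $j = i$ term equals $|v_i - w_i|/N^i \ge 1/N^i$, since $(-1)^i(v_i - w_i) > 0$. The tail over $j > i$ has absolute value at most $(N-1)\sum_{j > i} N^{-j} = 1/N^i$. Hence $\psi(w) - \psi(v) \ge 0$, with equality only when $|v_i - w_i| = 1$ and the tail cancels the leading term exactly.

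The main obstacle will be ruling out this equality case. Saturating the geometric bound forces $|v_{i+k} - w_{i+k}| = N - 1$ for every $k \ge 1$, and the sign needed for the tail to exactly cancel the leading term then determines each $v_{i+k}$ and $w_{i+k}$ individually. A short case analysis by the parity of $i$ would show that either $v_{[i, \infty)} = v_i (0(N{-}1))^{\infty}$ with $v_i \ne N - 1$ (when $i$ is odd), or $w_{[i, \infty)} = w_i (0(N{-}1))^{\infty}$ with $w_i \ne N - 1$ (when $i$ is even). Each of these is precisely the form excluded from $\WN^0$ by~\eqref{eq:WN0}, contradicting $v, w \in \WN^0$. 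Therefore $\psi(v) < \psi(w)$.
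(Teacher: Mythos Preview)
Your proof is correct and follows essentially the same approach as the paper's: isolate the first index $i$ where the words differ, observe that the $i$th term contributes at least $1/N^i$ while the tail is bounded in absolute value by $1/N^i$, and then analyze the equality case to show it forces one of the words to end in $(0(N{-}1))^\infty$ with the preceding letter $\neq N{-}1$, contradicting membership in $\WN^0$. The only cosmetic difference is that the paper packages the tail as $(-1)^i\bigl(\psi(v_{[i+1,\infty)})-\psi(w_{[i+1,\infty)})\bigr)$ and bounds it by $1$ via the range of $\psi$, whereas you bound the tail directly as a geometric series; the two are equivalent.
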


\begin{proof} Let $i$ be the index such that $v_j = w_j$ for all $j < i$ and $(-1)^i(v_i - w_i) > 0$.  Then 
\begin{align*}
\psi(w) - \psi(v) &= - \sum_{j = 1}^\infty \frac{w_j + 1}{(-N)^j} + \sum_{j = 1}^\infty \frac{v_j + 1}{(-N)^j} 
= - \frac{(w_i - v_i)}{(-N)^i} - \frac{1}{(-N)^i} \left( \psi(w_{[i+1,\infty)}) - \psi(v_{[i+1,\infty)} ) \right) \\
&= \frac{1}{N^i} \left( (-1)^i(v_i - w_i) + (-1)^i(\psi(v_{[i+1,\infty)}) - \psi(w_{[i+1,\infty)}) )\right) \geq  0,
\end{align*}
where the last inequality follows from the fact that $(-1)^i(v_i - w_i) \geq 1$ and \\ $|\psi(v_{[i+1,\infty)}) - \psi(w_{[i+1,\infty)}) | \leq 1$.  Moreover, if $i$ is even we have equality if and only if $v_{[i,\infty)} = v_i ((N{-}1) 0)^\infty$ and $w_{[i,\infty)} = (v_i  - 1) (0(N{-}1))^\infty$, in which case $w \notin \WN^0$.  If $i$ is odd, we have equality if and only if $w_{[i,\infty)} = w_i ((N{-}1)0)^\infty$ and $v_{[i,\infty)} = (w_i-1) (0(N{-}1))^\infty$, in which case $v \notin \WN^0$.  Therefore, the inequality is always strict.  
\end{proof} 

Even though we defined $\Sigma_{-N}$ on the larger set $\WN$, we will now show that the words $w\in\WN\setminus\WN^0$ do not induce any additional patterns, and so the above order-isomorphism implies that $\Al(M_{-N})=\Al(\Sigma_{-N})$. Recall that such words can be written as $w=w_1\dots w_k(0 (N{-}1))^\infty$ with $w_k \neq N{-}1$ and $k\ge 1$. If $k<n-2$, then $w$ does not induce any pattern of length $n$, because $w_{[n, \infty)} = w_{[n-2, \infty)}$. If $k \geq n-2$, then the word $w' = w_1 w_2 \dots w_{k} (0(N{-}1))^n 0^\infty \in \WN^0$ satisfies $\Pat(w', \Sigma_{-N}, n) = \Pat(w, \Sigma_{-N}, n)$.
For $k\ge n$, one could alternatively take $v=w_1\dots w_{k-1}(w_k + 1) ((N{-}1)0)^\infty$, which satisfies
$\Pat(v, \Sigma_{-N}, n) = \Pat(w, \Sigma_{-N}, n)$, and also $\psi(v) = \psi(w)$, when extending the above definition of $\psi$ to $\WN$.

The following lemma describes a straightforward property of the sets of allowed patterns of negative shifts.

\begin{lem}\label{lem:monotoneN} $\Al( \Sigma_{-N}) \subseteq \Al(\Sigma_{-(N+1)})$. \end{lem}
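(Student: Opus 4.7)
The plan is to exhibit, for any $\pi \in \Al(\Sigma_{-N})$, a word in $\mathcal{W}_{N+1}$ that realizes $\pi$ under $\Sigma_{-(N+1)}$. The key observation that makes this almost automatic is that the alternating lexicographic order, as defined in the paper, depends only on the componentwise sign of differences between matching positions, and therefore does not see the size of the alphabet. Consequently, the natural inclusion $\iota:\mathcal{W}_N \hookrightarrow \mathcal{W}_{N+1}$ induced by $\{0,1,\dots,N{-}1\}\subseteq\{0,1,\dots,N\}$ should both preserve the alternating order and commute with the shift.

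First, I would assume $\pi\in\Al(\Sigma_{-N})$, so that there exist $w\in\mathcal{W}_N$ and $n\ge1$ with $\Pat(w,\Sigma_{-N},n)=\pi$. Next, I would regard $w$ as an element $\iota(w)\in\mathcal{W}_{N+1}$ (every entry $w_j\in\{0,\dots,N{-}1\}$ is still a letter of $\{0,\dots,N\}$). Because the shift just deletes the first letter regardless of the alphabet, we have $\Sigma_{-(N+1)}^k(\iota(w))=\iota(\Sigma_{-N}^k(w))$ for every $k\ge 0$.

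Then I would verify the order claim: for any two words $v,w\in\mathcal{W}_N$, the relation $v\less w$ in $\mathcal{W}_N$ holds if and only if $\iota(v)\less \iota(w)$ in $\mathcal{W}_{N+1}$. This is immediate from the definition, since the first index $i$ where the words differ and the sign of $(-1)^i(v_i-w_i)$ are intrinsic to the sequences and unaffected by whether we allow the extra letter $N$. Applying this to the $n$ pairs among $w,\Sigma_{-N}(w),\dots,\Sigma_{-N}^{n-1}(w)$ (all of which lie in $\mathcal{W}_N$) shows that the relative $\less$-order of the orbit of $\iota(w)$ under $\Sigma_{-(N+1)}$ matches the relative order of the orbit of $w$ under $\Sigma_{-N}$. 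Hence $\Pat(\iota(w),\Sigma_{-(N+1)},n)=\Pat(w,\Sigma_{-N},n)=\pi$, giving $\pi\in\Al(\Sigma_{-(N+1)})$.

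There is no genuine obstacle here; the only thing to be careful about is not to confuse this statement with any claim about the map $\psi$ or about real-number representations, which would require matching a point $x\in[0,1]$ simultaneously to words over two different alphabets. The simple inclusion at the level of words sidesteps that issue entirely, and the coincidence between the shift on $\mathcal{W}_N$ and the shift on $\mathcal{W}_{N+1}$ restricted to $\iota(\mathcal{W}_N)$ is what does all the work.
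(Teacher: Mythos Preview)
Your proof is correct and follows exactly the same approach as the paper: use the inclusion $\mathcal{W}_N\subseteq\mathcal{W}_{N+1}$, note that both the shift and the alternating lexicographic order are intrinsic to the word and independent of the ambient alphabet, and conclude that the same $w$ realizes $\pi$ under $\Sigma_{-(N+1)}$. The paper's version is terser, simply observing that the two shift maps agree on $\mathcal{W}_N$, while you spell out the order-preservation explicitly.
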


 \begin{proof} Let $\pi \in \Al(\Sigma_{-N})$.  Then there exists a word $w \in \WN\subseteq \mathcal{W}_{N+1}$ such that \\ $\Pat(w, \Sigma_{-N}, n) = \pi$. Since $\Sigma_{-(N+1)}$ and $\Sigma_{-N}$ are shift maps, they agree on the alphabet $\WN$.  Therefore,  $\Pat(w, \Sigma_{-(N+1)}, n) = \Pat(w, \Sigma_{-N}, n) = \pi$, and so $\pi \in \Al(\Sigma_{-(N+1)})$. \end{proof}

For a given permutation $\pi$, let 
$$ \N(\pi) = \min\{N : \pi \in \Al( \Sigma_{-N} ) \},$$
that is, the smallest positive integer $N$ such that $\pi$ is realized by $\Sigma_{-N}$. Our goal in this section is to give a formula for $\N(\pi)$.

For this purpose, we will use a bijection that was introduced in~\cite{Elishifts}.
Let $\C_{n}^{\star}$ be the set of cyclic permutations of $[n]$ with a distinguished entry. We use the symbol ${\star}$ to denote the distinguished entry, since its value can be recovered from the other entries, and we will use both one-line notation and cycle notation. For example,
the cycle $(2,1,3)=312$, with the entry 2 marked, becomes $({\star},1,3)=31{\star}\in\C_n^{\star}$.
Define a bijection $\S_n \rightarrow \C_n^{\star}$ by $\pi \mapsto \hat{\pi}$ where, if $\pi = \pi_1 \pi_2 \dots \pi_n$ in one-line notation, then $\hat{\pi} = ({\star}, \pi_2, \dots, \pi_n)$ in cycle notation. Note that $\hat\pi$ satisfies $\hat{\pi}_{\pi_i} = \pi_{i +1}$ for $1 \leq i \le n-1$, and $\hat{\pi}_{\pi_n}=\pi_1$, which is the entry marked with a ${\star}$. This section builds on the techniques used by Archer in~\cite{KArc}.

For $1\le j\le n-1$, we say that $j$ is an {\em ascent} of $\hat\pi$ if either $\hat{\pi}_{j} < \hat{\pi}_{j + 1}$, or $\hat{\pi}_{j + 1}  = {\star}$ and $\hat{\pi}_{j}  < \hat{\pi}_{j +2}$. In the latter case (which requires $j\le n-2$), we say that $j$ is an {\em ascent over the $\star$}. Denote by $\asc(\hat\pi)$ the number of ascents of $\hat\pi$. Similarly to how we define ascents of $\hat\pi$ by skipping the $\star$, we say that a sequence $\hat\pi_i\hat\pi_{i+1}\dots\hat\pi_j$ is decreasing if so is the sequence obtained after deleting the $\star$, if applicable.

\begin{defn} \label{segmentation} A $-N$-segmentation of $\hat{\pi}$  is a set of indices $0 = e_0 \leq e_1 \leq \dots \leq e_N = n$ such that
\begin{enumerate}[(a)]
\item the sequence $\hat{\pi}_{e_k +1}\hat{\pi}_{e_k +2} \dots \hat{\pi}_{e_{k+1}}$ is decreasing for all $0\le k<N$;
\item if $\hat{\pi}_1 = n$ and $\hat{\pi}_{n-1}\hat{\pi}_{n} = 1{\star}$, then either $e_1 =0$ or $e_{N{-}1} \geq  n-1$;
\item if $\hat{\pi}_n = 1$ and $\hat{\pi}_1 \hat{\pi}_2 = {\star}n$, then either $e_{N{-}1} = n$ or $e_1 \leq 1$.
\end{enumerate} 
To each $-N$-segmentation of $\hat{\pi}$ we associate a finite word $\zeta = z_1 z_2 \dots z_{n-1}$ defined by $z_{i} = k$ whenever $e_k < \pi_i \leq e_{k+1}$, for $1\le i\le n-1$.
\end{defn}

Notice that condition (a) forces a $-N$-segmentation to have an index for each ascent of $\hat{\pi}$. More precisely, if $j$ is an ascent of $\hat\pi$, then $e_i=j$ for some $i$, unless $j$ is an ascent over the $\star$, in which case $e_i\in\{j,j+1\}$ for some $i$.
It follows that in order for $\hat\pi$ to have an $-N$-segmentation, we must have $N\ge 1+\asc(\hat\pi)$.  

If conditions (b) and (c) do not hold, a $-N$-segmentation with $N = \asc(\hat{\pi}) + 1$ is called a {\em minimal segmentation} of $\hat\pi$. The minimal segmentation of $\hat\pi$ is unique unless $\hat\pi$ has an ascent $j$ over the $\star$, in which case there are two minimal segmentations, corresponding to the choice $e_i\in\{j,j+1\}$  described above. In this case we have $\hat\pi_{\pi_n}=\star=\hat\pi_{j+1}$, which implies that $\pi_n=j+1$, and so both minimal segmentations produce the same prefix $\zeta$. Thus, the prefix $\zeta$ produced by minimal segmentations is unique.

When we do not need to specify $N$, a $-N$-segmentation will simply be called a segmentation.

\begin{example}  
Let $\pi = 1572364$.  Then $\hat{\pi} = 536{\star}742$, whose ascents are $2$ and $3$, the latter being an ascent over the $\star$.  Therefore, $\hat{\pi}$ has two $-3$-segmentations (i.e., minimal segmentations) given by  $(e_0, e_1, e_2, e_3) = (0, 2, 3, 7)$, and by $(e_0, e_1, e_2, e_3) = (0, 2, 4, 7)$,
respectively. Both produce the prefix $\zeta = 022012$.  
\end{example}

We will show that, under certain circumstances, it is possible to complete the prefix $\zeta$ into a word in $w= \zeta w_{[n, \infty)}\in\WN$ such that $\Pat(w, \Sigma_{-N}, n) = \pi$.  

Given a $-N$-segmentation of $\hat{\pi}$ and its associated finite word $\zeta = z_{[1, n-1]}$, we define the following indices and subwords of $\zeta$.  If $\pi_n\neq n$, let $x$ be the index such that $\pi_x = \pi_n + 1$, and let $p = z_{[x,n-1]}$.  Similarly, if $\pi_n\neq 1$, let $y$ be such that $\pi_y = \pi_n - 1$, and let $q = z_{[y,n-1]}$. 

\begin{defn} A segmentation of $\hat{\pi}$ is {\em invalid} if the associated prefix $\zeta$ satisfies that both $p$ and $q$ are defined and either $p = q^2$ or $q = p^2$.  Otherwise the segmentation is {\em valid}. \end{defn}

Note that if one minimal segmentation is invalid, then so is the other (if there is more than one), since it produces the same prefix $\zeta$.
It will be convenient to classify permutations into three types as follows. 

\begin{defn} We say that $\pi$ is 
\begin{itemize}
\item {\em cornered} if either $\hat{\pi}_1 = n$ and $\hat{\pi}_{n-1}\hat{\pi}_n = 1 {\star}$, or $\hat{\pi}_n = 1$ and $\hat{\pi}_1\hat{\pi}_2 = {\star} n$ (equivalently, if either $\pi_{n-2} \pi_{n-1} \pi_n = (n{-}1) 1 n$ or $\pi_{n-2} \pi_{n-1} \pi_{n} = 2n1$, respectively);
\item {\em collapsed} if the minimal segmentations of $\hat{\pi}$ are invalid; 
\item {\em regular} if $\pi$ is neither cornered nor collapsed.  
\end{itemize} 
\end{defn}

Note that the conditions on $\hat{\pi}$ for $\pi$ to be cornered are the same as in cases (b) and (c) in Definition~\ref{segmentation}. We point out that a permutation cannot be simultaneously cornered and collapsed. Indeed, a collapsed permutation requires the words $p$ and $q$ to be defined, which only happens if $\pi_n \notin\{1, n\}$. On the other hand, cornered permutations require $\pi_n = 1$ or $\pi_n = n$.  In particular, a minimal segmentation of $\hat{\pi}$ is defined for both collapsed and regular permutations.  
We can now state the main result of this section.

\begin{thm} \label{negativeshift} We have
$$\N(\pi) = 1 + \asc(\hat{\pi}) + \epsilon(\hat{\pi})$$
where 
$$\epsilon(\hat{\pi}) = \begin{cases} 0 & \text{if $\pi$ is regular,}\\
1 & \text{if $\pi$ is cornered or collapsed.}\end{cases}$$
\end{thm}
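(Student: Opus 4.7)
The plan is to establish matching inequalities $\N(\pi)\le 1+\asc(\hat\pi)+\epsilon(\hat\pi)$ and $\N(\pi)\ge 1+\asc(\hat\pi)+\epsilon(\hat\pi)$, with the lower bound coming from an analysis of what any realizing word must look like and the upper bound from an explicit construction. Throughout, I would work with $\Sigma_{-N}$ on $\WN$ rather than with $M_{-N}$ on $[0,1]$, and use that $\Al(\Sigma_{-N})=\Al(M_{-N})$.

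For the upper bound, fix a valid $-N$-segmentation of $\hat\pi$ with $N=1+\asc(\hat\pi)+\epsilon(\hat\pi)$ and let $\zeta=z_1\ldots z_{n-1}$ be the associated prefix. I would define $w\in\WN$ by setting $w_{[1,n-1]}=\zeta$ and choosing the tail $w_{[n,\infty)}$ to be one of a handful of periodic candidates (e.g.\ $p^\infty$, $q^\infty$, $((N{-}1)0)^\infty$, or $(0(N{-}1))^\infty$), selected so that $\Sigma_{-N}^{n-1}(w)$ slots into the correct position in the alternating-lex order between $\Sigma_{-N}^{y-1}(w)$ and $\Sigma_{-N}^{x-1}(w)$ (or at the appropriate end, when $\pi_n\in\{1,n\}$). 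The correctness of the construction reduces to verifying, for each pair $i<j$, that $\pi_i<\pi_j$ iff $\Sigma_{-N}^{i-1}(w)\less \Sigma_{-N}^{j-1}(w)$: if $z_i\ne z_j$ this is immediate from the first letters, and if $z_i=z_j$ then $\pi_i,\pi_j$ lie in a common decreasing block, so condition~(a) of the segmentation forces $\pi_{i+1}>\pi_{j+1}$, and the order reverses when we move from position $1$ to position $2$ of the shifted words, as required by $\less$. An inductive argument on the common prefix length closes this. The validity hypothesis ($p\ne q^2$ and $q\ne p^2$) is exactly what one needs so that the tail choice does not collapse two orbit positions together, and conditions (b) and (c) guarantee that when $\pi_n\in\{1,n\}$ the extreme tails $((N{-}1)0)^\infty$ or $(0(N{-}1))^\infty$ remain available.

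For the lower bound, suppose $w\in\WN$ realizes $\pi$. Because $w_1\le w_2\le\cdots\le w_{N-1}\le w_N$ can take only $N$ distinct values, the first letters of the shifts $\Sigma_{-N}^{i-1}(w)$, read in order of increasing $\pi_i$, partition $[n]$ into at most $N$ weakly increasing runs of the map $\pi_i\mapsto w_i$; alternating-lex order then forces each block to be decreasing in $\hat\pi$, yielding $N\ge 1+\asc(\hat\pi)$ and giving the regular case. When $\pi$ is cornered, I would show that the additional constraints (b)/(c) are imposed by the fact that the shift $\Sigma_{-N}^{n-1}(w)$ must be extremal in the appropriate direction but cannot be equal to $((N{-}1)0)^\infty$ or $(0(N{-}1))^\infty$ on the nose (since $w\in\WN^0$ or by the allowed-pattern condition), so an extra letter is needed. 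When $\pi$ is collapsed, the relations $p=q^2$ or $q=p^2$ would force $\Sigma_{-N}^{n-1}(w)$ to coincide with $\Sigma_{-N}^{x-1}(w)$ or $\Sigma_{-N}^{y-1}(w)$ beyond the prefix, yielding a repeated orbit point, which is forbidden; adding one to $N$ provides the extra letter needed to break this coincidence.

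The main obstacle is the case analysis surrounding the collapsed and cornered permutations: proving that the conditions $p=q^2$ or $q=p^2$ are not only obstructions but the \emph{only} obstructions beyond the ascent count, and showing that they genuinely cannot be avoided for any realizing word when $N=1+\asc(\hat\pi)$. A related subtlety is confirming that the constructed $w$ lies in $\WN^0$, or, if it happens to fall in $\WN\setminus\WN^0$, that the substitution described just before Lemma~\ref{lem:monotoneN} produces a word in $\WN^0$ inducing the same pattern. I also expect some care will be required in the sub-case $\pi_n\in\{1,n\}$, where either $p$ or $q$ is undefined and the tail must be chosen from among the extremal periodic words directly.
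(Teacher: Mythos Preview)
Your overall architecture matches the paper's: a lower bound obtained by showing that any realizing word $w$ determines a valid $-N$-segmentation of $\hat\pi$ (so $N\ge 1+\asc(\hat\pi)+\epsilon(\hat\pi)$), and an upper bound by exhibiting an explicit word in $\WN$ with $N=1+\asc(\hat\pi)+\epsilon(\hat\pi)$ that induces $\pi$. The reduction of the pairwise comparisons to a single ``slot'' condition on $w_{[n,\infty)}$ is also what the paper does in Lemma~\ref{pattern}.

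There is, however, a genuine gap in your upper-bound construction. Your candidate tails $p^\infty$ and $q^\infty$ cannot work: if $w=\zeta p^\infty$ then $w_{[x,\infty)}=p\cdot p^\infty=p^\infty=w_{[n,\infty)}$, so the $x$-th and $n$-th shifts coincide and $\Pat(w,\Sigma_{-N},n)$ is undefined. This failure has nothing to do with validity; it happens for every permutation with $\pi_n\ne n$. Likewise, the purely extremal tails $(0(N{-}1))^\infty$ and $((N{-}1)0)^\infty$ fail in general when $\pi_n\notin\{1,n\}$, since then you need \emph{both} $w_{[y,\infty)}\less w_{[n,\infty)}$ and $w_{[n,\infty)}\less w_{[x,\infty)}$, and an extremal $w_{[n,\infty)}$ kills one of these strict inequalities. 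The paper's remedy is to cap a \emph{finite} power of $p$ with an extremal word: $s=\zeta\, p^{\,n-2}(0(N{-}1))^\infty$ or $s=\zeta\, p^{\,n-2}((N{-}1)0)^\infty$, the choice depending on the parities of $n$ and $|p|$. Verifying that this $s$ induces $\pi$ is where most of the technical content lies: one needs (i) a structural result that $p$ is either primitive or the square of a primitive word of odd length (Lemma~\ref{d2}), (ii) that $\Pat(s,\Sigma_{-N},n)$ is defined (Lemma~\ref{twopointnine}), and (iii) that no shift $s_{[c,\infty)}$ with $1\le c\le n$ lands strictly between $s_{[n,\infty)}$ and $s_{[x,\infty)}$ (Lemma~\ref{nothingbetween}). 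Step (iii) is precisely where validity is invoked, via Lemma~\ref{onecollapsed}; your proposal misattributes validity to preventing orbit collisions, when in fact it is what prevents an unwanted $s_{[c,\infty)}$ from squeezing into the gap.

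Your lower-bound sketch for the collapsed case is also slightly off: when $p=q^2$, the obstruction is not that two orbit points coincide, but that the chain $q\,w_{[n,\infty)}\less w_{[n,\infty)}\less q^2\,w_{[n,\infty)}$ forces $w_{[n,\infty)}$ to begin with arbitrarily long powers of $q$, hence $w_{[n,\infty)}=q^\infty$, which then fails the strict inequalities (this is Lemma~\ref{onea}).
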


The rest of this section is dedicated to proving Theorem~\ref{negativeshift}.  Lemmas \ref{onea} through \ref{geq} are used to prove that $\N(\pi) \geq 1 + \asc(\hat{\pi}) + \epsilon(\hat{\pi})$.  Lemma \ref{numberseg} also gives information about the number of distinct prefixes $\zeta$ associated to valid $-N$-segmentations of $\hat{\pi}$ when $N=1 + \asc(\hat{\pi}) + \epsilon(\hat{\pi})$, which will be important in Section~\ref{sec:words} when we calculate $\B(\pi)$, the analog of $\N(\pi)$ for the map $\Tb$.  In the remaining lemmas, we show that 
certain words $s,t,\in\mathcal{W}_{1 + \asc(\hat{\pi}) + \epsilon(\hat{\pi})}$ induce the pattern $\pi$.  This will allow us to conclude that $\N(\pi) = 1 + \asc(\hat{\pi}) + \epsilon(\hat{\pi})$.  

\begin{example}
Let $\pi = 345261$. Then $\hat{\pi} = {\star}64521$ and $\pi$ is cornered, so $\epsilon(\hat{\pi}) = 1$. Since $\asc(\hat\pi)=1$, Theorem~\ref{negativeshift} says that $\N(\pi)=3$. A $-3$-segmentation of $\hat\pi$ is given by $(e_0, e_1, e_2, e_3) = (0, 3, 6, 6)$, producing $\zeta = 01101$.  A different $-3$-segmentation is given by $(e_0, e_1, e_2, e_3) = (0, 0, 3, 6)$, producing $\zeta = 12212$. 
\end{example}

\begin{example} Let $\pi = 3651742$. Then $\hat{\pi} = 7{\star}62154$ and $\asc({\hat{\pi}})=1$. The only minimal segmentation, given by $(e_0, e_1, e_2) = (0, 5, 7)$,  produces the word $\zeta = 010010$, which satisfies $p = q^2$, where $q=010$. Thus, $\pi$ is collapsed, and Theorem~\ref{negativeshift} says that $\N(\pi)=3$.
In Lemma \ref{beginszeta}, we show that if $w$ induces $\pi$, then $w = \zeta w_{[n, \infty)}$. To get some intuition behind the theorem, let us see why the binary alphabet is not enough to realize $\pi$.  If $w$ were to induce $\pi$, then $w_{[y, \infty)} \less w_{[n, \infty)} \less w_{[x, \infty)}$ (where $x=1$ and $y=4$), that is,
\begin{equation}\label{eq:s'} 010w_{[n, \infty)} \less w_{[n, \infty)} \less 010010 w_{[n, \infty)},\end{equation}
which implies that $w_{[n, \infty)} = 010 w_{[n + 3, \infty)}$.  By the definition of $\less$, canceling the odd-length prefixes $010$ switches the inequality an odd number of times, and we find that
$$ 010 w_{[n + 3, \infty)} \gess w_{[n + 3, \infty)} \gess 010010w_{[n + 3, \infty)}.$$
But then $w_{[n + 3, \infty)}$ would have to start with $010$ as well. It follows from this argument that the only possibility would be $w_{[n, \infty)}=(010)^\infty$, which doesn't satisfy~\eqref{eq:s'}.  Thus, no word $w \in \mathcal{W}_2$ starting with $\zeta$ will induce the pattern $\pi$, and we must add an additional index to our segmentation in order to make it valid.  There are three valid $-3$-segmentations, giving rise to the words $\zeta^{(1)} = 121021$, $\zeta^{(2)} = 021020$, and $\zeta^{(3)} = 010020$.
\end{example}

The following two lemmas appear in~\cite{KArc} in the more general setting of signed shifts.

\begin{lem}  [\cite{KArc}] \label{onea}
Let $\zeta$ be the prefix corresponding to a segmentation of $\hat{\pi}$. If $\zeta$ can be completed to a word $w=\zeta w_{[n,\infty)}$ with $\Pat(w, \Sigma_{-N}, n) = \pi$, then the segmentation is valid. 
\end{lem}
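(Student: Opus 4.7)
Suppose for contradiction that the segmentation is invalid (so $\pi_n \notin \{1, n\}$ and either $p = q^2$ or $q = p^2$) but $w = \zeta w_{[n,\infty)}$ satisfies $\Pat(w, \Sigma_{-N}, n) = \pi$. Let $u = w_{[n, \infty)}$. Because $w$ starts with $\zeta = z_1 \cdots z_{n-1}$, we have $w_{[x, \infty)} = z_x \cdots z_{n-1}\, u = p u$ and $w_{[y, \infty)} = q u$. Since $w$ realizes $\pi$ and $\pi_y = \pi_n - 1 < \pi_n < \pi_n + 1 = \pi_x$, the relative order of these three shifts under $\less$ forces
\[
q u \less u \less p u.
\]

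The crux of the argument is the following auxiliary fact: for any finite nonempty word $r$ over $\{0, \ldots, N{-}1\}$ and any infinite word $u \neq r^\infty$,
\[
r u \less u \iff r^\infty \less u.
\]
I would prove this via the order-isomorphism $\psi$. A direct expansion gives $\psi(ru) = \tilde\psi(r) + \psi(u)/(-N)^{|r|}$, where $\tilde\psi(r) = -\sum_{j=1}^{|r|} (r_j+1)/(-N)^j$. As a function of $\psi(u)$, this is affine with fixed point $\psi(r^\infty)$ and slope of absolute value $1/N^{|r|} < 1$, so
\[
\psi(ru) - \psi(u) = \bigl(\psi(r^\infty) - \psi(u)\bigr)\bigl(1 - 1/(-N)^{|r|}\bigr),
\]
and the second factor is positive. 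Hence $\psi(ru) < \psi(u)$ iff $\psi(r^\infty) < \psi(u)$. Since $w$ induces a pattern, all $n$ of its shifts are distinct, so the non-strict comparisons of $\psi$-values given by the lemma in the excerpt upgrade to the strict $\less$-inequalities we need.

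Applying the auxiliary fact, if $p = q^2$ then $qu \less u$ yields $q^\infty \less u$, while $u \less q^2 u$ yields $u \less (q^2)^\infty = q^\infty$, a contradiction. Symmetrically, if $q = p^2$, then $u \less pu$ gives $u \less p^\infty$ and $p^2 u = qu \less u$ gives $p^\infty = (p^2)^\infty \less u$, again contradictory. Hence no such completion $w$ can exist, and the segmentation is valid.

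The main obstacle is the auxiliary fact $ru \less u \iff r^\infty \less u$. Alternating lexicographic order behaves delicately under prefixing since the parity of the position of first disagreement controls the direction of the comparison, so either the $\psi$-route above (which makes the parity issue transparent by packaging it into an affine contraction) or an explicit induction on how many copies of $r$ the word $u$ begins with is required. The $\psi$ approach is cleaner but uses the full force of the order-isomorphism proved in the excerpt.
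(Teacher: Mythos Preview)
Your argument is correct and takes a genuinely different route from the paper's. The paper works purely combinatorially: from the sandwich $qu \less u \less qqu$ (with $u=w_{[n,\infty)}$) it splits on the parity of $|q|$. When $|q|$ is even, canceling the prefix $q$ from the two outer terms gives $u \less qu$, contradicting the left inequality; when $|q|$ is odd, the sandwich forces $u$ to begin with $q$, and canceling one copy of $q$ flips both inequalities, so iterating shows $u=q^\infty$, which violates the strict sandwich. Your $\psi$-based argument packages this parity dichotomy into the sign of $(-N)^{|r|}$ inside the affine identity $\psi(ru)-\psi(u)=(\psi(r^\infty)-\psi(u))(1-(-N)^{-|r|})$, so no case split is needed. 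In effect you are giving an analytic proof of the equivalence $ru\less u \iff r^\infty\less u$, which the paper later establishes combinatorially as Lemma~\ref{dinfinity} and uses throughout Section~\ref{sec:words}; it is nice to see it fall out of the contraction picture.

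One technical caveat: the step ``distinct shifts under $\less$ imply distinct $\psi$-values'' is not justified as stated. The strict order-preservation of $\psi$ is proved only on $\WN^0$, and on $\WN$ one has merely $v\less w\Rightarrow \psi(v)\le\psi(w)$; distinct words can share a $\psi$-value. The clean fix is to invoke the remark immediately following the order-preservation lemma: any $w\in\WN$ inducing $\pi$ may be replaced by some $w'\in\WN^0$ inducing the same pattern, and $\WN^0$ is closed under shifts, so $qu,u,q^2u\in\WN^0$ and your strict inequalities on $\psi$-values go through.
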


\begin{proof}
Suppose for contradiction that $\zeta = w_{[1, n-1]}$ is such that $p = q^2$. Since $w$ induces $\pi$, we have $w_{[y, \infty)} \less w_{[n, \infty)} \less w_{[x, \infty)}$, or equivalently 
\begin{equation}\label{eq:q} q w_{[n, \infty)} \less w_{[n, \infty)} \less qq w_{[n, \infty)}.\end{equation}  

If $|q|$ is even, then canceling the even-length prefixes, $q$, gives $w_{[n, \infty)} \less q w_{[n, \infty)} = w_{[y, \infty)}$, which is impossible because $w$ induces $\pi$ and $\pi_{y} = \pi_{n} - 1$.

If $|q|$ is odd, Equation~\eqref{eq:q} implies that $w_{[n, \infty)} = q w_{[2n - y, \infty)}$.  Canceling prefixes of odd length we obtain $q w_{[2n - y, \infty)} \gess w_{[2n - y, \infty)} \gess qq w_{[2n-y, \infty)}$,
which implies that $w_{[2n -y, \infty)} $ must start with $q$ as well. Repeating this argument, it follows that the only possibility would be $w_{[n, \infty)} = q^\infty$, but this choice of $w_{[n, \infty)}$ doesn't satisfy~\eqref{eq:q}. 

An analogous argument shows that assuming $q =p^2$ also gives a contradiction.  Hence, the segmentation that produces $\zeta$ is valid.  
\end{proof}

\begin{lem} [\cite{KArc}] \label{beginszeta} If $w\in\WN$ and $\Pat(w, \Sigma_{-N}, n) = \pi$, then there exists a valid $-N$-segmentation of $\hat{\pi}$ whose associated prefix is $\zeta = w_{[1, n-1]}$. 
\end{lem}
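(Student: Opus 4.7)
The plan is to construct a candidate segmentation directly from $w$, then verify the three defining conditions of Definition~\ref{segmentation} and validity separately. I would set $e_k = |\{i \in [n] : w_i < k\}|$ for $0 \leq k \leq N$, giving a non-decreasing sequence with $e_0 = 0$ and $e_N = n$. To see that the associated prefix equals $w_{[1,n-1]}$, note that in the alternating lex order position~$1$ carries sign $(-1)^1 = -1$, so $w_i < w_j$ implies $w_{[i,\infty)} \less w_{[j,\infty)}$. Consequently, the rank $\pi_i$ lies in the interval $(e_{w_i}, e_{w_i+1}]$, so $z_i = w_i$ for each $i \in [n-1]$, as desired.

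Condition~(a) would follow from the alt lex flip: if $w_i = w_{i'} = k$ and $\pi_i < \pi_{i'}$, then stripping the common first letter from $w_{[i,\infty)} \less w_{[i',\infty)}$ gives $w_{[i+1,\infty)} \gess w_{[i'+1,\infty)}$, hence $\hat{\pi}_{\pi_i} > \hat{\pi}_{\pi_{i'}}$ whenever neither of $i, i'$ equals $n$. Since the ranks $\pi_i$ for $i$ with $w_i = k$ exactly fill the interval $(e_k, e_{k+1}]$, the block $\hat{\pi}_{e_k+1}\dots\hat{\pi}_{e_{k+1}}$ is decreasing after skipping the $\star$, which occupies position $\pi_n$.

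The main obstacle is conditions~(b) and~(c), which require a case analysis tied to the structure of cornered permutations. Consider~(b): $\pi_{n-2}\pi_{n-1}\pi_n = (n{-}1)1n$ forces $w_{n-1} = \min_j w_j$ and $w_n = \max_j w_j$, because $w_{[n-1,\infty)}$ and $w_{[n,\infty)}$ carry the extremal ranks. If $w_{n-1} \geq 1$ then $e_1 = 0$. Otherwise $w_{n-1} = 0$, and I would assume for contradiction that $w_n = N{-}1$ while some $j < n$ has $w_j = N{-}1$. Comparing $w_{[j,\infty)}$ against $w_{[n-2,\infty)}$ (rank $n-1$) forces $w_{n-2} = N{-}1$, and then iteratively applying the alt lex flip to $w_{[n-2,\infty)} \less w_{[n,\infty)}$, together with $w_{n-1} = 0$, propagates through the subsequent positions to force both tails to equal $((N{-}1)0)^\infty$, contradicting the distinctness of $\pi_{n-2}$ and $\pi_n$. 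This gives $e_{N-1} \geq n-1$, establishing~(b). A symmetric argument, exchanging the roles of $0$ and $N{-}1$ and of the first and last segments, handles~(c).

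Finally, validity of the segmentation is immediate from Lemma~\ref{onea}: since $w$ extends the prefix $\zeta = w_{[1,n-1]}$ to a word inducing $\pi$, the associated prefix cannot satisfy $p = q^2$ or $q = p^2$.
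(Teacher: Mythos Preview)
Your proposal is correct and follows essentially the same approach as the paper: you define $e_k$ identically, verify condition~(a) via the one-letter alt-lex flip, derive the associated prefix by the rank argument, and invoke Lemma~\ref{onea} for validity. The only cosmetic difference is in condition~(b): the paper obtains the contradiction in one stroke by using that $((N{-}1)0)^\infty$ is the $\less$-maximum of $\WN$ (so $w_{[n,\infty)} \less (N{-}1)0\,w_{[n,\infty)} = w_{[n-2,\infty)}$), whereas you iteratively strip common letters to force $w_{[n,\infty)} = ((N{-}1)0)^\infty$; both routes yield the same contradiction.
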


\begin{proof}  Let $w \in \WN$ be such that $\Pat(w, \Sigma_{-N}, n ) = \pi$. For $0 \leq k \leq N$, let $e_k = |\{ 1 \leq r \leq n : w_{r} < k\}|$.  We claim that the sequence $0=e_0\le e_1\le\ldots\le e_N=n$ is a $-N$-segmentation of $\hat{\pi}$.  

First we show that condition (a) in Definition \ref{segmentation} holds.
By the definition of $e_k$, the prefix $w_{[1, n]}$ has $e_k$ letters less than $k$.  Therefore, among the subwords $w_{[r, \infty)}$ with $1 \leq r \leq n$, there are exactly $e_k$ of them with $w_r < k$, and
exactly $e_{k+1}$ of them with $w_r \le k$. Since $w$ induces $\pi$, it follows that if $e_k<\pi_i\le e_{k+1}$, then $w_{[i,\infty)}$ must be one of the subwords with $w_i \le k$ but not $w_i < k$, and so $w_i=k$. 

To show that the sequence $\hat{\pi}_{e_k +1}\hat{\pi}_{e_k +2}\dots \hat{\pi}_{e_{k+1}}$ is decreasing for all $0 \leq k < N$, 
suppose that $e_k < \pi_i < \pi_j \leq e_{k+1}$.  We will show that $\hat\pi_{\pi_i}>\hat\pi_{\pi_j}$ assuming that $i,j<n$, since the entry $\hat\pi_{\pi_n}=\star$ does not disrupt the property of $\hat{\pi}_{e_k +1}\hat{\pi}_{e_k +2}\dots \hat{\pi}_{e_{k+1}}$ being decreasing. 
By the previous paragraph, $w_i = w_j = k$, and $w_{[i, \infty)} \less w_{[j, \infty)}$ because $w$ induces $\pi$.
Therefore, $w_{[i + 1, \infty)} \gess w_{[j + 1, \infty)}$, and so $\pi_{i + 1} > \pi_{j + 1}$, or equivalently $\hat{\pi}_{\pi_i} = \pi_{i+1} > \pi_{j+1} = \hat{\pi}_{\pi_j}$.

To show that condition (b) holds, assume now that $\hat{\pi}_1 = n$ and $\hat{\pi}_{n-1} \hat{\pi}_{n} = 1 \star$, which is equivalent to $\pi_{n-2} \pi_{n-1} \pi_{n} = (n{-}1)1n$.  Suppose for contradiction that $e_1 > 0$ and $e_{N-1} < n-1$. Then, by the definition of 
the sequence $0 = e_0 \leq e_1 \leq \ldots \leq e_{N} = n$, $w_{[1, n]}$ has at least one $0$ and at least two $N{-}1$. 
Since $w$ induces $\pi$, we have that $w_{[n-1, \infty)}$ is the smallest and $w_{[n-2, \infty)}$ is the second largest among the subwords $w_{[r, \infty)}$ with $1 \leq r \leq n$. It follows that $w_{n-1} = 0$ and $w_{n-2} = N{-}1$. We cannot have $w_{[n, \infty)} = ((N{-}1)0)^\infty$, since then $w_{[n-2, \infty)} = w_{[n, \infty)}$ and $\Pat(w, \Sigma_{-N}, n)$ would be undefined.  Therefore, $w_{[n, \infty)} \less ((N{-}1)0)^\infty$, because $((N{-}1)0)^\infty$ is the largest word in $\WN$ with respect to $\less$.  It follows that $$w_{[n, \infty)} \less (N{-}1)0 w_{[n, \infty)} =w_{[n-2, \infty)},$$  contradicting that $w$ induces $\pi$ and $\pi_{n-2} < \pi_{n}$. 
Hence, condition (b) in Definition \ref{segmentation} holds.  

Verifying condition (c) follows a similar argument.  We conclude that $0 = e_0 \leq e_1 \leq \ldots  \leq e_{N} = n$ as defined above is a $-N$-segmentation of $\hat{\pi}$.  Its associated prefix is  $\zeta = w_{[1, n-1]}$ because we have seen that $w_i = k$ whenever $e_{k} < \pi_i \leq e_{k+1}$, which agrees with the construction of $\zeta$ in Definition \ref{segmentation}. Finally, since the prefix $\zeta$ can be completed to a word $w$ inducing $\pi$, it follows from Lemma~\ref{onea} that this $-N$-segmentation is valid.
\end{proof}

\begin{lem} \label{flipping} Let $\zeta$ be the prefix defined by some segmentation of $\hat\pi$, and let $i, j < n$. If $\pi_i < \pi_j$, then either $z_i < z_j$, or otherwise $z_i = z_j$ and $\pi_{i + 1} > \pi_{j+1}$.  \end{lem}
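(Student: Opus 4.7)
The plan is to split into two cases according to whether $z_i = z_j$ or not, which corresponds to whether $\pi_i$ and $\pi_j$ fall into the same block $(e_k, e_{k+1}]$ of the segmentation.

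In the first case, $z_i \neq z_j$, I would use the monotonicity of the sequence $0 = e_0 \le e_1 \le \cdots \le e_N = n$. Writing $z_i = k$ and $z_j = \ell$, the definitions give $e_k < \pi_i \le e_{k+1}$ and $e_\ell < \pi_j \le e_{\ell+1}$. Were $\ell < k$, we would have $\pi_j \le e_{\ell+1} \le e_k < \pi_i$, contradicting $\pi_i < \pi_j$. Hence $k < \ell$, that is, $z_i < z_j$, which is the first alternative in the conclusion.

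In the second case, $z_i = z_j = k$, both $\pi_i$ and $\pi_j$ lie in $(e_k, e_{k+1}]$ with $\pi_i < \pi_j$, so positions $\pi_i$ and $\pi_j$ in $\hat\pi$ both sit inside the block $[e_k+1, e_{k+1}]$. Condition (a) of Definition~\ref{segmentation} says that the sequence $\hat\pi_{e_k+1}\hat\pi_{e_k+2} \cdots \hat\pi_{e_{k+1}}$ is decreasing, where the convention is that the entry $\star$ at position $\pi_n$ is first deleted from the sequence if it appears there. The hypothesis $i, j < n$ ensures $\pi_i \neq \pi_n \neq \pi_j$, so the entries at positions $\pi_i$ and $\pi_j$ are $\hat\pi_{\pi_i} = \pi_{i+1}$ and $\hat\pi_{\pi_j} = \pi_{j+1}$, neither of which is $\star$. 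Whether or not $\star$ sits somewhere between them inside this block, deleting it preserves the relative order of the entries at positions $\pi_i$ and $\pi_j$; strictness of the decrease then forces $\pi_{i+1} > \pi_{j+1}$, giving the second alternative.

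I expect the only mild obstacle to be the bookkeeping around a possible occurrence of $\star$ inside the common block in the second case, but this is harmless once one uses the hypothesis $i, j < n$ to rule out that either of the compared entries is $\star$ itself. Everything else amounts to unpacking the definition of $z_i$ in terms of the segmentation indices and invoking monotonicity of the block structure.
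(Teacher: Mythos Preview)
Your proof is correct and follows essentially the same approach as the paper's. The paper organizes the argument as a contrapositive (showing that $\pi_{i+1} < \pi_{j+1}$ forces $z_i < z_j$ via an ascent of $\hat\pi$ between positions $\pi_i$ and $\pi_j$), whereas you split directly on whether $z_i = z_j$; in both cases the substance is the monotonicity of the $e_k$ together with the decreasing condition (a) on each block, and your handling of the $\star$ via the hypothesis $i,j<n$ is exactly what is needed.
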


\begin{proof}  Suppose that $\pi_i < \pi_j$. Then the construction of $\zeta$ yields $z_i \leq z_j$.  We will prove that if $\pi_{i+1} < \pi_{j+1}$, then $z_i < z_j$.  By the definition of $\hat{\pi}$, we have $\hat{\pi}_{i} = \pi_{i + 1}$ and $\hat{\pi}_{j} = \pi_{j+1}$, and so $\hat{\pi}_{i} < \hat{\pi}_{j}$.  By Definition~\ref{segmentation}, the segmentation must contain an index $e_k$ such that $\hat{\pi}_i \leq e_k < \hat{\pi}_j$.  But then the construction of $\zeta$ then yields $z_i < z_j$. 
\end{proof}

\begin{lem} \label{geq}\label{numberseg}   A valid $-N$-segmentation of $\hat{\pi}$ exists if and only if $N \geq 1 + \asc(\hat{\pi}) + \epsilon(\hat{\pi})$. 
 Additionally, for $N=1 + \asc(\hat{\pi})+\epsilon(\hat{\pi})$, the number of distinct prefixes $\zeta$ arising from valid $-N$-segmentations of $\hat{\pi}$ is
\begin{itemize}
\item 1 if $\pi$ is regular;
\item 2 if $\pi$ is cornered;
\item $\min \{ |p|, |q| \}$ if $\pi$ is collapsed.
\end{itemize}
 \end{lem}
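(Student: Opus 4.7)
The plan is to prove both halves of the lemma by a parallel case analysis on whether $\pi$ is regular, cornered, or collapsed. For each type I will first establish the lower bound on $N$, then show that $N = 1 + \asc(\hat\pi) + \epsilon(\hat\pi)$ admits the claimed number of valid segmentations and that these account for all valid prefixes.

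For the lower bound, the inequality $N \geq 1 + \asc(\hat\pi)$ follows from the observation made immediately after Definition~\ref{segmentation}: each ascent of $\hat\pi$ forces at least one index into the segmentation. When $\pi$ is cornered I will argue that no $-N$-segmentation with $N = 1 + \asc(\hat\pi)$ can satisfy condition~(b) or~(c). In case~(b), the minimal segmentation has $e_1 \geq 1$, so $e_1 = 0$ fails; and since $\hat\pi_n = \star$ prevents $n-1$ from being an ascent, the last ascent is at most $n-2$, so $e_{N-1} \leq n-2$ and $e_{N-1} \geq n-1$ also fails. Case~(c) is symmetric. When $\pi$ is collapsed, the minimal segmentations yield a single prefix $\zeta$ (as noted before the lemma, the two minimal choices in the ascent-over-$\star$ situation give identical prefixes), and this $\zeta$ is invalid by hypothesis. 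In each case, at least one extra index is forced.

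For existence and counting at $N = 1 + \asc(\hat\pi) + \epsilon(\hat\pi)$: the regular case is immediate since the minimal segmentation satisfies (a)--(c) and is valid, giving exactly one prefix. For cornered $\pi$ I construct valid segmentations by placing the extra index at either end (for case~(b), either $e_1 = 0$ or $e_{N-1} \in \{n-1, n\}$, and symmetrically for~(c)); these two choices yield distinct prefixes because one causes $\zeta$ to avoid the letter $0$ while the other causes $\zeta$ to avoid the letter $N-1$, and any other placement violates condition~(b) or~(c). For collapsed $\pi$ the plan is to observe that inserting an extra index $e$ into the minimal segmentation bumps $z_i$ up by one exactly for those $i$ with $\pi_i > e$, and to show that among the possible values of $e$ exactly $\min(|p|,|q|)$ of them produce distinct prefixes that break the $p = q^2$ or $q = p^2$ relation.

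I expect the main obstacle to be the collapsed count. Assuming without loss of generality that $p = q^2$, I need to verify that there are exactly $|q|$ distinct valid prefixes arising from suitable insertions, and that no other value of $e$ yields either a new valid prefix or breaks the relation in a previously unseen way. The first half will involve using the mismatch between $\pi_{x+i-1}$ and $\pi_{y+i-1}$ for each $i = 1, \ldots, |q|$ to identify an interval of $e$-values whose insertion forces a bump in one suffix but not the other, destroying the equality $p' = (q')^2$. The second half requires a careful accounting to ensure that insertions at values not of this form either reproduce an already-counted prefix or fail to destroy the relation; Lemma~\ref{flipping} should help control how shifting $z$-values interacts with the ascent structure, ensuring the decreasing-run condition~(a) is preserved throughout.
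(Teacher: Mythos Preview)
Your approach matches the paper's: the same three-way case split, the same mechanism for the cornered case (the extra index must go to an end to satisfy (b) or (c), and the two end placements give distinct prefixes because one omits the letter $0$ while the other omits $N-1$), and the same idea for the collapsed case (insert an extra index to break the $p = q^2$ or $q = p^2$ relation).

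The one substantive refinement you will need for the collapsed count is the claim that, for each $j = 1, \ldots, c$ with $c = \min\{|p|,|q|\}$, the values $\pi_{n-j}$ and $\pi_{n-c-j}$ (equivalently $\pi_{y+i-1}$ and $\pi_{x+i-1}$ in your indexing) are \emph{consecutive integers}. The paper proves this by contradiction using Lemma~\ref{flipping}: if some $\pi_k$ lay strictly between them, then applying the lemma along the equal blocks $z_{[n-c-j,\,n-c-1]} = z_{[n-j,\,n-1]}$ forces $\pi_{k+j}$ strictly between $\pi_{n-c}$ and $\pi_n$, contradicting $\pi_{n-c} \in \{\pi_n - 1, \pi_n + 1\}$. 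Once consecutiveness is established, each pair determines a \emph{unique} insertion value; the $c$ values are pairwise distinct (they are entries of $\pi$ at distinct indices), so they yield $c$ distinct prefixes; and any insertion not equal to one of these values leaves every pair unseparated (your bump hits both members or neither), so the resulting prefix still satisfies $p' = (q')^2$ or $q' = (p')^2$ and is invalid. This single structural observation handles both the existence and the exact count simultaneously, replacing your ``interval of $e$-values'' and ``careful accounting'' steps.

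One minor correction to your last sentence: condition~(a) is automatically preserved by any insertion, since splitting a decreasing run yields two decreasing runs. The obstruction in the ``second half'' is validity (the $p=q^2$ relation persisting), not condition~(a).
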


\begin{proof}  
We consider three cases, depending on whether $\pi$ is regular, cornered, or collapsed. 

If $\pi$ is regular, parts (b) and (c) of Definition~\ref{segmentation} do not apply, and so a $-N$-segmentation of $\hat{\pi}$ exists if and only if $N \geq 1 + \text{asc}(\hat{\pi})=1 + \asc(\hat{\pi}) + \epsilon(\hat{\pi})$. For  $N =1 + \asc(\hat{\pi})$, such a segmentation is a minimal segmentation, and thus it is valid (otherwise $\pi$ would be collapsed). For $N > 1 + \asc(\hat{\pi}) + \epsilon(\hat{\pi})$, one can obtain a valid $-N$-segmentation of $\hat{\pi}$ by adding indices $e_i = n$ for $1 + \asc(\hat{\pi}) < i \leq N$ to a minimal segmentation.  This reason it is valid is that the corresponding prefix is the same as the unique prefix $\zeta$ determined by a minimal segmentation of $\hat{\pi}$.  

If $\pi$ is cornered, then either part (b) or (c) of  Definition~\ref{segmentation} apply, requiring an additional index which is not an ascent of $\hat\pi$. Therefore, a $-N$-segmentation of $\hat{\pi}$ exists if and only if $N \geq 2 + \text{asc}(\hat{\pi}) =1 + \asc(\hat{\pi}) + \epsilon(\hat{\pi})$. Since a cornered permutation must have either $\pi_n = 1$ or $\pi_n = n$, one of the words $p$ and $q$ is not defined, and so any segmentation of $\hat\pi$ is valid. Additionally, for $N=2 + \text{asc}(\hat{\pi})$, if part (b) applies, we may choose either $e_1 = 0$ or $e_{N-1} \geq n-1$ as the additional index.  Whether we choose $e_{N-1} = n-1$ or $e_{N-1} = n$ does not change the associated prefix $\zeta$, since $\pi_n=n$ but the letter $z_n$ is not defined as a part of the prefix. A symmetric situation occurs when part (c) applies. In either case, there are two distinct prefixes $\zeta$ arising from a $-N$-segmentation of $\hat{\pi}$ when $N = 2 + \asc(\hat{\pi})$.  

If $\pi$ is collapsed, then the minimal segmentations of $\hat{\pi}$ are not valid. In order to obtain a valid segmentation, we must add an additional index.  Letting $c = \min\{ |p|, |q| \}$, the unique prefix $\zeta$ resulting from a minimal segmentation satisfies $z_{[n-2c, n-c-1]} = z_{[n - c, n-1]}$, and so we have $c$ pairs of equal letters, $z_{n-j} = z_{n-c-j}$ for $1\le j\le c$.  
If we add an index $e_k$ so that $\pi_{n-j} < e_{k} \leq \pi_{n-c-j}$ or $\pi_{n-j} < e_{k} \leq \pi_{n-c-j}$ (depending on the relative order of $\pi_{n-j}$ and $\pi_{n-c-j}$), then the corresponding prefix $\zeta'$ satisfies $z'_{n-j} \neq z'_{n-j-c}$.  This yields a valid $-(2 + \asc(\hat{\pi}))$-segmentation of $\hat{\pi}$, which can easily be extended to
a valid $-N$-segmentation for every $N\ge2+\asc(\hat\pi)$.

Let us show that, when $N=2 + \asc(\hat{\pi})$, there are exactly $c$ choices for the additional index $e_k$ that result into a valid $-N$-segmentation. 
We claim that, for $1\le j\le c$, the values $\pi_{n-j}$ and $\pi_{n-j-c}$ are consecutive.
Without loss of generality, let us assume that $\pi_{n-j}<\pi_{n-j-c}$, and  suppose for contradiction that there is an index $k$ such that $\pi_{n-j} < \pi_k < \pi_{n-j-c}$.  Since $z_{[n-j, n-1]} = z_{[n - j - c, n-c-1]}$, Lemma \ref{flipping} applied $k$ times yields $\pi_{n} < \pi_{k + j} < \pi_{n-c} = \pi_{x}$ or $\pi_{y} =\pi_{n-c} < \pi_{k + j} < \pi_{n}$ (depending on the parity of $j$), a contradiction to $\pi_x = \pi_n + 1$ or $\pi_{y} = \pi_n - 1$, respectively, thus proving the claim.
It follows that, for each $j$ with $1\le j\le c$, there is exactly one choice of $e_k$ satisfying $\pi_{n-j} < e_{k} \leq \pi_{n-c-j}$ or $\pi_{n-j} < e_{k} \leq \pi_{n-c-j}$, which forces $z'_{n-j} \neq z'_{n-j-c}$ in the associated prefix. This gives a total of $c$ choices for $e_k$. \end{proof}

Let us make make a few observations about the two prefixes that may arise from a $-N$-segmentation of $\hat\pi$ when $N = \asc(\hat{\pi}) +2$ when $\pi$ is cornered.  If $\pi_{n-2} \pi_{n-1} \pi_{n} = (n{-}1)1n$, then the $-N$-segmentation satisfies $e_1 = 0$ or $e_{N-1} \geq n-1$.  Choosing $e_{N-1} \geq n-1$ produces a prefix $\zeta \in \{0, 1, \dots, N{-}2 \}^{n-1}$.  Furthermore, since the indices $e_1,\dots,e_{N-2}$ must occur at the ascents of $\hat\pi$, $\zeta$ contains each letter in $\{0, \dots, N{-}2\}$.  Since $\pi_{n-1} = 1$, we get $z_{n-1} = 0$, and since $\pi_{n-2} = n-1$, we get $z_{n-2} = N-2$.  Hence, $q = (N{-}2)0$. On the other hand, choosing $e_1 = 0$ produces a prefix  $\zeta^+ \in \{1, 2, \dots, N{-}1\}^{n-1}$, and by the same logic we get $q = (N{-}1)1$. Similarly, if $\pi$ is cornered of the form $\pi_{n-2} \pi_{n-1} \pi_{n} = 2n1$, the two $-N$-segmentations of $\hat\pi$ have associated prefixes $\zeta \in \{0, 1, ..., N{-}2\}^{n-1}$ with $p = 0(N{-}2)$ and $\zeta^+ \in \{1, 2, \dots, N{-}1 \}^{n-1}$ with $p = 1 (N{-}1)$.

For the next five lemmas (from Lemma~\ref{d2} to Lemma~\ref{pattern}), fix $N = 1 + \asc(\hat{\pi}) + \epsilon(\hat{\pi})$, and let $\zeta$ be a prefix determined by some valid $-N$-segmentation of $\hat{\pi}$, guaranteed to exist by Lemma \ref{geq}. 

We include the proof of the following lemma from~\cite{KArc} to make this section self-contained.

\begin{lem}  [\cite{KArc}] \label{d2} With $\zeta$ as defined above, either $p$ is primitive, or $p = d^2$, where $d$ is primitive and $|d|$ is odd.  The same is true for $q$.
\end{lem}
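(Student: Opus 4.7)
The plan is to argue by contradiction. Suppose $p = d^m$ for some word $d$ with $m \geq 2$ such that either $m \geq 3$ or $|d|$ is even, and derive a contradiction. Set $a_k = \pi_{x+k|d|}$ for $k = 0, 1, \ldots, m$, so that $a_0 = \pi_x = \pi_n + 1$ and $a_m = \pi_n$. The engine of the proof is to propagate Lemma~\ref{flipping} through one full period of $d$: for any $k_1, k_2 \in \{0, \ldots, m-1\}$ and any $0 \leq i < |d|$ we have $z_{x + k_1|d| + i} = z_{x + k_2|d| + i}$, and all positions stay $<n$, so Lemma~\ref{flipping} can be applied $|d|$ successive times. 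This yields
\[
a_{k_1} < a_{k_2} \Leftrightarrow a_{k_1+1} > a_{k_2+1} \qquad \text{if } |d| \text{ is odd,}
\]
and the same equivalence with the right-hand $>$ replaced by $<$ if $|d|$ is even.

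First I would dispose of the case $|d|$ even: the equivalence preserves orientations, so the consecutive signs of $(a_k, a_{k+1})$ all agree and $a_0, a_1, \ldots, a_m$ is strictly monotone; but a strictly monotone sequence of $m+1 \geq 3$ integers connecting $a_m$ and $a_m + 1$ does not exist.

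For $|d|$ odd and $m \geq 3$, iterating the equivalence gives $a_0 < a_2 \Leftrightarrow a_2 < a_4 \Leftrightarrow \cdots$, so the even-indexed subsequence $a_0, a_2, a_4, \ldots$ is strictly monotone and the odd-indexed subsequence $a_1, a_3, \ldots$ is strictly monotone in the opposite direction. If $m$ is even, both $a_0$ and $a_m$ lie on the even subchain, whose $m/2 + 1 \geq 3$ strictly monotone integers force $|a_0 - a_m| \geq 2$, contradicting $|a_0 - a_m| = 1$. If $m$ is odd, assume the even subchain is increasing (the decreasing case is symmetric). Then $a_{m-1} > a_0 > a_m$, and the $(m+1)/2$ strictly decreasing terms of the odd subchain give $a_1 \geq a_m + (m-1)/2 \geq a_0$, which by distinctness forces $s_0 := \mathrm{sign}(a_0, a_1) = +$. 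Since the sign of $(a_k, a_{k+1})$ is $(-1)^{k|d|} s_0$, at $k = m-1$ with $m$ odd and $|d|$ odd this equals $+$, so $a_{m-1} < a_m$; this contradicts $a_{m-1} > a_m$.

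This leaves only $p = d^2$ with $|d|$ odd, and any nontrivial further decomposition $d = e^\ell$ with $\ell \geq 2$ would produce $p = e^{2\ell}$ with $2\ell \geq 4$, already excluded; hence $d$ is primitive. The proof for $q$ is symmetric: use $\pi_y = \pi_n - 1$ in place of $\pi_x = \pi_n + 1$ and swap $<$ with $>$ throughout. The main difficulty is the bookkeeping in the odd-$m$, $|d|$-odd subcase, where the contradiction emerges from a subtle interaction between the two oppositely oriented chain monotonicities, the alternating sign of $(a_{m-1}, a_m)$, and the very tight unit gap $a_0 - a_m = 1$.
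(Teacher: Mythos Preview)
Your proof is correct and follows essentially the same approach as the paper: both arguments set $a_k = \pi_{x+k|d|}$, iterate Lemma~\ref{flipping} through a full period of $d$ to control the relative order of the $a_k$, and derive a contradiction from the constraint $a_0 - a_m = 1$. The only organizational difference is in the odd-$|d|$ case: the paper reduces even $r$ to the even-$|d|$ case via $d' = d^2$ and treats odd $r$ by an explicit nesting of inequalities, whereas you handle both parities of $m$ directly via the two interleaved monotone subchains and a sign-alternation argument for consecutive pairs $(a_k,a_{k+1})$. Both routes are equally elementary and rest on the same lemma.
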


\begin{proof} We can write $p = d^r$, where $d$ primitive, and let $i=|d|$.  Then $n = x+ri$ and $$d=z_{[x, x+i-1]} = z_{[x + i, x + 2i - 1]} = \dots = z_{[x + (r-1)i, n-1]}.$$  

Suppose first that $i$ is even. If $\pi_x < \pi_{x + i}$, then applying Lemma \ref{flipping} $i$ times we obtain $\pi_{x+i} < \pi_{x + 2i}$.  Repeatedly applying this argument yields
$$\pi_x < \pi_{x+i} < \pi_{x + 2i} < \dots < \pi_{x + ri} = \pi_n,$$
which contradicts the fact that $\pi_x = \pi_n + 1$.  On the other hand, if $\pi_x > \pi_{x + i}$, then we get
$$\pi_x > \pi_{x+i} > \pi_{x + 2i} > \dots > \pi_{x + ri} = \pi_n.$$  Since $\pi_x = \pi_n + 1$, we must have $r = 1$, and so $p$ must be primitive in this case. 

Now suppose that $i$ is odd.  If $r$ is even, then we can write $p=(d')^{r/2}$ with $d' = d^2$ and apply the previous argument (which does not require $d'$ to be primitive) to conclude that $r/2=1$ and $p=d^2$, with $|d|=i$ odd. We are left we the case that $r$ is odd.  

If $\pi_x < \pi_{x + i}$, then Lemma \ref{flipping} applied $i$ times implies that $\pi_{x + i} > \pi_{x + 2i}$.  Consider two cases depending on the relative order of $\pi_x$ and $\pi_{x+2i}$.
If $\pi_x < \pi_{x+2i} < \pi_{x + i}$, then applying Lemma~\ref{flipping} $i$ times gives $\pi_{x+i} > \pi_{x + 3i} > \pi_{x + 2i}$.  Applying the same lemma $i$ more times we obtain $\pi_{x + 2i} < \pi_{x + 4i} < \pi_{x + 3i}$. Repeated applications of Lemma~\ref{flipping} give
$$\pi_x < \pi_{x + 2i} < \pi_{x + 4i} < \dots < \pi_{x + (r-1)i} < \pi_{x + ri} < \pi_{x + (r-2)i} < \dots < \pi_{x + 3i} < \pi_{x + i}.$$
Similarly, if $\pi_{x + 2i} < \pi_x < \pi_{x + i}$, repeated applications of Lemma~\ref{flipping} give
$$\pi_{x + (r-1)i} < \dots < \pi_{x + 4i} < \pi_{x + 2i} < \pi_x < \pi_{x + i} < \pi_{x + 3i} < \dots < \pi_{x + ri},$$
In both cases, we get $\pi_x < \pi_{x + ri} = \pi_n$, a contradiction to $\pi_x = \pi_n + 1$.  

If $\pi_x > \pi_{x + i}$, then Lemma \ref{flipping} applied $i$ times implies that $\pi_{x + i} < \pi_{x + 2i}$.  Again, we consider two cases
depending on the relative order of $\pi_x$ and $\pi_{x+2i}$. If $\pi_{x + i} < \pi_x < \pi_{x +2i}$, then repeated applications of Lemma \ref{flipping} give
$$\pi_{x + ri} < \dots < \pi_{x + 3i} < \pi_{x + i} < \pi_x < \pi_{x + 2i} < \pi_{ x + 4i} < \dots < \pi_{x + (r-1)i}.$$
Similarly, if $\pi_{x +i} < \pi_{x + 2i} < \pi_x$, then Lemma \ref{flipping} gives
$$\pi_{x + i} < \pi_{x + 3i} < \dots < \pi_{x + ri} < \pi_{x + (r-1)i} < \dots < \pi_{x + 4i} < \pi_{x + 2i} < \pi_x.$$
In both cases, the fact that $\pi_x = \pi_n + 1= \pi_{x + ri}+1$ implies that $r = 1$, and so $p$ is primitive.

The proof that $q$ is either primitive or the square of a primitive word of odd length follows a parallel argument.  
\end{proof}

It follows from Lemma~\ref{d2} that if $p = q^2$, then $q$ is primitive and $|q|$ is odd.  Likewise, if $q = p^2$, then $p$ is primitive and $|p|$ is odd.  

Note that $\wmaxn$ and $\wminn$ are the largest and the smallest words in $\WN$, respectively, with respect to $\less$. When $\pi_n\neq n$ (so that $x$ and $p$ are defined), let
\begin{equation}\label{eq:defs} s =    \begin{cases}
     \zeta p^{n-2} \wminn & \text{ if $n$ is even or $|p|$ is even,}\\
          \zeta p^{n-2} \wmaxn & \text{ if $n$ is odd and $|p|$ is odd.}
\end{cases} \end{equation}
Similarly, if $\pi_n\neq 1$ (so that $y$ and $q$ are defined), let
\begin{equation} t =    \begin{cases}
     \zeta q^{n-2} \wmaxn & \text{ if $n$ is even or $|q|$ is even,}\\
          \zeta q^{n-2} \wminn & \text{ if $n$ is odd and $|q|$ is odd. }
\end{cases} \end{equation} 
Note that $s, t\in\WN$ by construction. We will show that $s$ and $t$ induce $\pi$.

\begin{lem} \label{onecollapsed} If $\zeta = a qq$ (for some $a$) and $|q|$ is odd, then $p =q^2$ (in particular, $p$ is defined).  Likewise, if $\zeta = a' pp$ (for some $a'$) and $|p|$ is odd, then $q = p^2$.  \end{lem}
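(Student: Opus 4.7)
\emph{The plan.} I prove the first statement; the second follows by the same argument with the roles of $p$ and $q$ interchanged. Write $y = n - |q|$, so the hypothesis $\zeta = aqq$ reads off as the equalities $z_{n-2|q|+k} = z_{n-|q|+k}$ for $0 \le k \le |q|-1$. My first step is to iterate the contrapositive of Lemma~\ref{flipping} starting from $\pi_y < \pi_n$ (which holds since $\pi_y = \pi_n - 1$), using these $z$-equalities to walk backward from the index pair $(y, n)$ to the pair $(y - |q|, y)$. Each of the $|q|$ applications flips the direction of the inequality, and since $|q|$ is odd the final comparison gives $\pi_{n-2|q|} > \pi_y = \pi_n - 1$. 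Injectivity of $\pi$ then forces $\pi_{n-2|q|} \ge \pi_n + 1$; in particular $\pi_n \ne n$, so the index $x$ with $\pi_x = \pi_n + 1$ and the word $p$ are defined.

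The core of the proof is to upgrade this to the equality $\pi_{n-2|q|} = \pi_n + 1$, which is equivalent to $x = n - 2|q|$ and hence immediately yields $p = z_{[n-2|q|,\,n-1]} = qq = q^2$ via the hypothesis. I argue by contradiction: assume $\pi_{n-2|q|} > \pi_n + 1$. Then $\pi_y < \pi_x < \pi_{n-2|q|}$ is a three-term chain whose outer indices share the $z$-value $z_y = z_{n-2|q|}$. Applying Lemma~\ref{flipping} to each adjacent pair pinches $z_x$ to equal this common endpoint value, so at the next step the three $z$-entries agree again and the whole three-term inequality propagates forward with a flipped sign. Iterating this $|q|$ times, the parity bookkeeping yields, at the final step, a chain in which $\pi_n$ is strictly larger than some intermediate value $\pi_m$ which is itself strictly larger than $\pi_y = \pi_n - 1$, forcing an integer strictly between $\pi_n - 1$ and $\pi_n$, which is impossible. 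Hence $\pi_{n-2|q|} = \pi_n + 1$, as required.

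I expect the main obstacle to be the bookkeeping of the three-term chain across $|q|$ iterations: tracking the alternating direction of the inequalities, and ensuring that the indices $y+i$, $x+i$, $n-2|q|+i$ all stay in $[1, n-1]$ so that Lemma~\ref{flipping} remains applicable. A mildly subtle subcase occurs when $x > y$, in which $x + i$ reaches $n$ before $y + i$ does; there the iteration terminates one step earlier and the contradiction must be extracted from the $z$-equalities already established combined with the constraint $\pi_x = \pi_n + 1$, but the overall scheme is unchanged.
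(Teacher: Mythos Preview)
Your overall architecture matches the paper's: first iterate the contrapositive of Lemma~\ref{flipping} backward $|q|$ times to get $\pi_{n-2|q|} > \pi_y$, hence $\pi_{n-2|q|} \ge \pi_n + 1$; then assume for contradiction that $\pi_{n-2|q|} > \pi_n + 1$, insert a middle index into the chain $\pi_y < \cdot < \pi_{n-2|q|}$, and propagate the three-term chain forward using the equality $z_{y+j} = z_{m+j}$. In the case $x < y$ your argument is exactly the paper's (with the specific choice $k = x$), and after $|q|$ steps you land on $\pi_y < \pi_{x+|q|} < \pi_n$, the desired contradiction.

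The gap is the subcase $x > y$. Here the middle index $x+j$ reaches $n$ at step $j = n - x < |q|$, so the iteration cannot run the full $|q|$ steps and you never produce an integer strictly between $\pi_{n}-1$ and $\pi_n$. You assert that ``the contradiction must be extracted from the $z$-equalities already established combined with the constraint $\pi_x = \pi_n + 1$, but the overall scheme is unchanged'' --- but this is not so. What you have after $n-x$ steps is the equality $z_{[x,n-1]} = z_{[y,\,y+n-x-1]} = z_{[m,\,m+n-x-1]}$ together with (say) $\pi_{m+n-x} < \pi_n < \pi_{y+n-x}$; neither of these, even combined with $\pi_x = \pi_n+1$, yields a value strictly between $\pi_n-1$ and $\pi_n$. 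The paper's proof handles this case by a genuinely different mechanism: it replaces the middle term $\pi_n$ by $\pi_y$ (using $\pi_y = \pi_n - 1$), continues iterating for $k-y$ more steps, and concatenates the resulting $z$-equalities to show that $q$ equals a nontrivial cyclic shift of itself. That contradicts the primitivity of $q$, which holds by Lemma~\ref{d2} because $|q|$ is odd. You never invoke primitivity (or Lemma~\ref{d2}), and without it the $x>y$ branch does not close.
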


\begin{proof}  Let $i = |q|$ and $m = n - 2i = y - i$. Then $z_{[m, y-1]} = z_{[y, n-1]} =q$, which is primitive by  Lemma~\ref{d2} because $|q|$ is odd.  By the contrapositive of Lemma \ref{flipping} applied $i$ times, $\pi_y < \pi_n$ implies that $\pi_{m} > \pi_y$.  Since $\pi_y=\pi_n-1$, we have $\pi_y < \pi_n <  \pi_m$.  We will show that $\pi_m = \pi_n +1$, from where it will follow that $m=x$ and $p = q^2$. Suppose for contradiction that there exists some $\pi_k$ such that $\pi_n < \pi_k < \pi_m$.

Consider first the case $k < y$. Since $\pi_y < \pi_k < \pi_m$, Lemma \ref{flipping} and the fact that $z_y=z_m$ forces $z_y=z_k=z_m$ and $\pi_{y+1} > \pi_{k+1} > \pi_{m+1}$. Applying the same argument $i$ times yields $z_{[y, n-1]}=z_{[k, k+i - 1]}= z_{[m, y-1]} =q$ and $\pi_n=\pi_{y+i} > \pi_{k+i} > \pi_{m+i}=\pi_y$. But the fact that  $\pi_n> \pi_{k+i} > \pi_y$ contradicts $\pi_y = \pi_n - 1$.

Consider now the case $k > y$. 
Since $\pi_y < \pi_k < \pi_m$ and $z_{[y, m-1]} = z_{[m, n-1]} = q$, Lemma \ref{flipping} applied $n-k$ times implies that $z_{[y, y + n-k - 1]} = z_{[k, n-1]} = z_{[m, m + n - k - 1]}$. Additionally, if $n -k$ is odd, it yields $\pi_{y + n-k} > \pi_n > \pi_{m + n-k}$, and so $\pi_{y + n - k} > \pi_y > \pi_{m + n -k}$. Applying Lemma \ref{flipping} $k-y$ more times, we get $z_{[y + n - k, n -1]} = z_{[y, k-1]} = z_{[m + n - k, y - 1]}$.  Similarly, if $n - k$ is even, we first get $\pi_{y + n - k} < \pi_n < \pi_{m + n - k}$ and $\pi_{y + n - k} < \pi_y < \pi_{m + n -k}$, and then using Lemma \ref{flipping} again we conclude that $z_{[y + n - k, n -1]} = z_{[y, k-1]} = z_{[m + n - k, y - 1]}$ as well.
Combining the above equalities, we have $z_{[k, n-1]}z_{[y, k-1]}= z_{[y, y + n-k - 1]} z_{[y + n - k, n-1]}=z_{[y, n-1]}=q$, which states that $q$ is equal to one of its non-trivial cyclic shifts, thus contradicting that it is primitive. 

\end{proof}

\begin{lem} \label{twopointnine}  Let $w\in\{s,t\}$ and suppose it is defined. Then $\Pat(w,\Sigma_{-N},n)$ is defined as well. \end{lem}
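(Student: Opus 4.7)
The plan is to prove this by contradiction, focusing on the case $w = s$; the case $w = t$ is entirely analogous, with $q$ in place of $p$ and $\wmaxn$ in place of $\wminn$. I would suppose $s_{[i, \infty)} = s_{[j, \infty)}$ for some $1 \le i < j \le n$, and set $k = j - i$.

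The starting point is that $s$ has period $k$ starting at position $i$. Since the tail of $s$ is $\wend \in \{\wminn, \wmaxn\}$ with minimal period $2$, this forces $k$ to be even. Propagating the tail's period $2$ backward via the period $k$---for any $m \ge i$, picking $\ell$ large enough that $m + \ell k$ lies in the tail and computing $s_m = s_{m + \ell k} = s_{m + \ell k + 2} = s_{m + 2}$---I would conclude that $s_{[i, \infty)}$ is globally $2$-periodic, and thus equals one of $(0(N{-}1))^\infty$ or $((N{-}1)0)^\infty$.

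This forces the middle region $p^{n-1}$ of $s = z_{[1, x-1]}\, p^{n-1}\, \wend$ to be a portion of an alternating word over $\{0, N{-}1\}$, so $p$ itself is $2$-periodic with these two letters. Combining with Lemma~\ref{d2} (which says $p$ is primitive or $p = d^2$ with $d$ primitive and $|d|$ odd), I would conclude $|p| = 2$ and $p \in \{0(N{-}1), (N{-}1)0\}$. A parity check at the junction of the final copy of $p$ with $\wend$ pins down which of the two options $p$ must be, consistently with the parity rule in the definition of $s$.

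It remains to derive a contradiction from this forced form of $p$. In the cornered cases, the explicit forms of $p$ (namely $p \in \{0(N{-}2), 1(N{-}1)\}$ for $\pi$ of form $2n1$, and the symmetric description on $q$ for form $(n{-}1)1n$) recalled just before Lemma~\ref{d2} directly preclude $p \in \{0(N{-}1), (N{-}1)0\}$. In the regular or collapsed cases, $\pi_n \notin \{1, n\}$, so both $p$ and $q$ are defined and the valid segmentation satisfies $p \ne q^2$ and $q \ne p^2$; the $2$-periodicity then forces the entries $z_m$ for $m \in [i, n{-}1]$ to alternate between $0$ and $N{-}1$, and tracking the constraint $\pi_y = \pi_n - 1$ through Lemma~\ref{flipping} forces either $p = q^2$ or $q = p^2$, the sought contradiction. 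The hard part will be this last case analysis: showing that the parity rule in the definition of $\wend$ together with the segmentation encoding of $\pi$ leaves no room for the collision.
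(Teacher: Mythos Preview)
Your periodicity-propagation argument in the first four steps is valid and reaches the same endpoint as the paper, namely that a collision forces $|p|=2$ and then $p = 0(N{-}1)$ (your junction-parity check correctly rules out $p = (N{-}1)0$, since the last letter of $p^{n-2}$ must differ from the first letter $0$ of $\wminn$). The paper gets there by a shorter route: it simply observes that if $p \neq 0(N{-}1)$ (hence, by Lemma~\ref{d2}, $p$ is not a power of $0(N{-}1)$ either), then the first occurrence of the tail $\wminn$ in $s_{[i,\infty)}$ pins down $i$.

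Your final case analysis, however, has a genuine gap. The claim ``in the regular or collapsed cases, $\pi_n \notin \{1, n\}$'' is false for regular permutations: being regular only means $\pi$ is neither cornered nor collapsed, and a permutation with $\pi_n = 1$ but $\pi_{n-2}\pi_{n-1} \neq 2\,n$ is perfectly regular. In that situation $q$ is undefined, validity of the segmentation gives you nothing, and your plan to ``force $p = q^2$ or $q = p^2$'' cannot even be stated. Even when both $p$ and $q$ are defined, the asserted derivation is not substantiated; in fact the paper shows that $p = 0(N{-}1)$ already forces $\pi_n = 1$, so that sub-case is empty. The correct finish (and what the paper does) is: from $p = 0(N{-}1)$, use Lemma~\ref{flipping} repeatedly to show that no $\pi_i$ with $i < n-2$ can lie below $\pi_{n-2}$ (a maximal such $i$ would produce $i+2$ with the same property), hence $\pi_{n-2} = 2$ and $\pi_n = 1$; a symmetric argument gives $\pi_{n-1} = n$. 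Thus $\pi$ is cornered of the form $2\,n\,1$, and condition~(c) of Definition~\ref{segmentation} forces $\zeta$ to omit either the letter $0$ or the letter $N{-}1$, contradicting $p = 0(N{-}1)$.
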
  

\begin{proof} We prove the statement for $w=s$. The proof for $w=t$ is analogous.

Suppose first that $p\neq 0(N{-}1)$. Note that by Lemma~\ref{d2}, we also have $p\neq (0(N{-}1))^r$ for all $r\ge2$.
Thus, for $i, j \leq n$, the equality $w_{[i, \infty)} = w_{[j, \infty)}$ implies that these two words have the first instance of $\wminn$ appearing at the same position, forcing $i=j$. Therefore, $\Pat(w, \Sigma_{-N}, n)$ is defined.  

Suppose now that $p = 0(N{-}1)$. Note that $x=n-2$ and $\pi_{n-2}=\pi_n+1$ in this case. If there is an index $i < n-2$ such that $\pi_i < \pi_{n-2}$, take the maximal one. Since $z_{n-2} = 0$, Lemma \ref{flipping} implies that $z_{i} = z_{n-2} = 0$ and $\pi_{i + 1} > \pi_{n-1}$. Similarly, since $z_{n-1} = N{-}1$, applying Lemma \ref{flipping} again gives  $z_{i+1} = z_{n-1} = N{-}1$ and $\pi_{i + 2} < \pi_{n} < \pi_{n-2}$, contradicting the maximality of $i$. It follows that $\pi_i >\pi_{n-2}$ for all $i < n-2$. Since clearly $\pi_{n-2} < \pi_{n-1}$ because $z_{n-1} = N{-}1$, we conclude that $\pi_{n-2} = 2$ and $\pi_n = 1$.  Now, if there was an index $j$ such that $\pi_j > \pi_{n-1}$, then Lemma \ref{flipping} would give $z_{j} = z_{n-1} = N{-}1$ and $\pi_{j+1} < \pi_{n} + 1=2$, which is impossible. We conclude that $\pi_{n-1} = n$. 

We have shown that in the case $p = 0(N{-}1)$, we must have $\pi_{n-2} \pi_{n-1} \pi_n = 2 n 1$, and so $\pi$ is cornered. By part (c) of  Definition \ref{segmentation}, a $-N$-segmentation of $\hat{\pi}$ has either $e_{N-1}  = n$ or $e_1 \leq 1$.   If $e_{N-1} = n$, then $\zeta$ does not contain the letter $N{-}1$ by construction.  Likewise, if $e_1 \leq 1$, then $\zeta$ does not contain the letter $0$ because the only entry of $\pi$ that can satisfy $\pi_i\le e_1$ is $\pi_n=1$. Thus, $\zeta$ cannot contain both a $0$ and a $N{-}1$, which contradicts that $p = 0(N{-}1)$. 
\end{proof}

\begin{lem}  \label{nothingbetween}  For the word $s$, we have $s_{[n, \infty)} \less s_{[x, \infty)}$ and there is no $1 \leq c \leq n$ such that $s_{[n, \infty)} \less s_{[c, \infty)} \less s_{[x, \infty)}$.   Likewise, $t_{[y, \infty)} \less t_{[n, \infty)}$ and there is no $1 \leq c \leq n$ such that $t_{[y, \infty)}$ $\less t_{[c, \infty)} \less t_{[n, \infty)}$.  \end{lem}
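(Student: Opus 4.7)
The plan is to prove the statements for $s$; the $t$-version follows by a symmetric argument, using $q$ in place of $p$, swapping $\wminn$ with $\wmaxn$, and reversing inequalities in parallel steps. For the strict inequality $s_{[n,\infty)} \less s_{[x,\infty)}$, I would observe that $s_{[x,\infty)} = p\cdot s_{[n,\infty)}$, so the two words agree on their first $|p|(n-2)$ letters (both start with $p^{n-2}$) and then continue as $p\tau$ and $\tau$, where $\tau \in \{\wminn,\wmaxn\}$ is the tail chosen in~\eqref{eq:defs}. Lemma~\ref{twopointnine} guarantees that the two suffixes are distinct, so $p\tau \ne \tau$; the extremality of $\wminn$ (resp.\ $\wmaxn$) then forces $p\tau \gess \tau$ (resp.\ $p\tau \less \tau$). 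Prepending a common prefix of even length preserves the $\less$-relation while one of odd length reverses it, and in each branch of~\eqref{eq:defs} the parity of $|p|(n-2)$ is exactly what is needed to transfer the relation on the tails into the desired $s_{[n,\infty)} \less s_{[x,\infty)}$.

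For the ``nothing in between'' claim I argue by contradiction: assume $c\in\{1,\dots,n\}\setminus\{x,n\}$ satisfies $s_{[n,\infty)} \less s_{[c,\infty)} \less s_{[x,\infty)}$. The first step is to show that $s_{[c,\infty)}$ must agree with $s_{[n,\infty)}$ on its first $|p|(n-2)$ letters. Indeed, $s_{[n,\infty)}$ and $s_{[x,\infty)}$ coincide on that prefix, so if $s_{[c,\infty)}$ disagreed with them at some position in that range, the first such disagreement would place $s_{[c,\infty)}$ strictly on the same side of both $s_{[n,\infty)}$ and $s_{[x,\infty)}$, contradicting betweenness. Hence $s_{[c,\,c+|p|(n-2)-1]}=p^{n-2}$, and matching this substring against the structure $s=\zeta\,p^{n-2}\tau$ forces $c\equiv x\pmod{|p|}$; the only such values of $c$ in $\{1,\dots,n\}\setminus\{x,n\}$ are $c=x-j|p|$ with $j\ge1$, and for each, the last $(j+1)|p|$ letters of $\zeta$ must equal $p^{j+1}$. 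In particular, $\zeta$ ends with $pp$.

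From here the argument splits on the parity of $|p|$. If $|p|$ is odd, Lemma~\ref{onecollapsed} applied to the suffix $pp$ of $\zeta$ yields $q=p^2$, contradicting the validity of the segmentation that produced $\zeta$. If $|p|$ is even, then~\eqref{eq:defs} places us in the branch with $\tau=\wminn$, and $s_{[c,\infty)}=p^j\cdot s_{[x,\infty)}$; these two words agree on their first $|p|(n-1)$ letters (both equal $p^{n-1}$) and then continue as $p^j\tau$ and $\tau$, respectively. The two suffixes are distinct by Lemma~\ref{twopointnine}, so minimality of $\wminn$ gives $p^j\tau \gess \tau$; because $|p|$ is even the agreement length $|p|(n-1)$ is even as well, so the $\gess$ survives prepending the common prefix and we obtain $s_{[c,\infty)} \gess s_{[x,\infty)}$, contradicting $s_{[c,\infty)} \less s_{[x,\infty)}$.

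The step I expect to be the main obstacle is the careful translation of the substring identity $s_{[c,\,c+|p|(n-2)-1]}=p^{n-2}$ into the two clean structural consequences $c\equiv x\pmod{|p|}$ and ``the last $(j+1)|p|$ letters of $\zeta$ equal $p^{j+1}$,'' since both are needed to make Lemma~\ref{onecollapsed} apply in the odd case and the parity comparison against $s_{[x,\infty)}$ close out the even case. Once this structural translation is in hand, the remainder is a mechanical parity check, and the same scheme runs for $t$ with the roles of $\wminn$ and $\wmaxn$ interchanged.
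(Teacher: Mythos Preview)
Your overall strategy---show $s_{[c,\infty)}$ must start with $p^{n-2}$, then use this to force $\zeta$ to end in a power of $p$ and split on the parity of $|p|$---is close to the paper's, and your handling of the first inequality and of the odd-$|p|$ branch via Lemma~\ref{onecollapsed} is essentially the same.  The gap is in the step you flagged yourself.  The assertion that $s_{[c,\,c+|p|(n-2)-1]}=p^{n-2}$ forces $c\equiv x\pmod{|p|}$ is only valid when $p$ is primitive, and Lemma~\ref{d2} explicitly allows the alternative $p=d^2$ with $d$ primitive of odd length.  In that situation the conclusion fails: for instance $c=x+|d|$ lies in $\{1,\dots,n\}$ (since $|d|<|p|=n-x$), and $s_{[x+|d|,\infty)}=d^{2n-3}\tau$ does begin with $p^{n-2}=d^{2n-4}$, yet $c\not\equiv x\pmod{|p|}$.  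More generally, the alignment argument only pins down $c$ modulo $|d|$, so you get neither ``$c=x-j|p|$ with $j\ge 1$'' nor ``$\zeta$ ends in $p^{j+1}$'', and both the odd and even branches of your case split collapse.

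The paper addresses this by treating the primitive and non-primitive cases separately at two points: first when arguing $c<x$ (ruling out $c=x+|d|$ by showing it would force $v=d\,\wminn$, which violates $v\less d^2\wminn$), and then when showing that $v$ begins with $p$ (using that occurrences of $d$, rather than $p$, must align).  You would need an analogous bifurcation; once you know $p$ is primitive the congruence argument is fine (modulo the small edge case $|p|=1$, $c=1$, $x=n-1$, where $|p^{n-2}|=n-2<n-1$ and the overlap with the block at $x$ is too short for Fine--Wilf, so a direct check is required).
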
  

\begin{proof} 
We will prove the statement for $s$. The one for $t$ is analogous. The fact that $s_{[n, \infty)} \less s_{[x, \infty)}$ follows immediately by canceling equal prefixes in the word. Indeed, if $n$ is even or $|p|$ is even, this is equivalent to $p^{n-2} \wminn \less p^{n-1} \wminn$, and to  $\wminn \less p \wminn$, which holds because $\wminn$ is the smallest word in $\WN$ with respect to $\less$.  If both $n$ and $|p|$ are odd,  $s_{[n, \infty)} \less s_{[x, \infty)}$ is equivalent to $\wmaxn \gess p \wmaxn$, which again holds because $\wmaxn$ is the largest word in $\WN$ with respect to $\gess$.  

Next we prove that there is no $1 \leq c \leq n$ such that $s_{[n, \infty)} \less s_{[c, \infty)} \less s_{[x, \infty)}$, that is,
$$p^{n-2} \wminn \less s_{[c, \infty)} \less p^{n-1} \wminn.$$ 
Suppose for contradiction that such a $c$ existed. Then $s_{[c, \infty)} = p^{n-2}v$ for some word $v$ satisfying $\wminn \less v \less p \wminn$ (if $n$ or $|p|$ are even)
or  $\wminn \gess v \gess p \wminn$ (if $n$ and $|p|$ are odd).

We claim that $c<x$. If $p$ is primitive, this is because the first $p$ in $s_{[c, \infty)}$ cannot overlap with both the first and second occurrences of $p$ in $s_{[x, \infty)}$. If $p$ is not primitive, then by Lemma~\ref{d2}, $p = d^2$  where $d$ is primitive and $|d|$ is odd. The only way to have $c> x$ would be if $v = d\wminn$, the largest word beginning with $d$, but this is impossible because $v \less d^2 \wminn$.  

Next we show that $v$ begins with a $p$. Consider first the case when $p$ is primitive.  
Unless $|p| = 1$, $c=1$ and $x = n-1$, one of the initial $n-2$ occurrences of $p$ in $s_{[c, \infty)}$ must coincide with the first occurrence of $p$ in $s_{[x, \infty)}$, since $|p^{n-2}|> n-1$, and so $v$ begins with $p$. If $|p| = 1$, $c=1$ and $x = n-1$, we have $s_{[c, \infty)} = p^{n-2} s_{[x, \infty)}$, and since $s_{[x, \infty)}$ begins with a $p$, we have that $v$ begins with a $p$ as well.  
If $p$ is not primitive, then $p = d^2$ where $d$ is primitive and $|d|$ is odd, by Lemma~\ref{d2}.  Since $|d^{2(n-2)}| > n -1$, one of the initial $2(n-2)$ occurrences of $d$ in $s_{[c, \infty)}$ must coincide with the first occurrence of $d$ in $s_{[x, \infty)}$.  The fact that $s_{[x, \infty)}$ begins with $d^{2(n-1)}$ and $c < x$ implies that $v$ begins with $d^2 = p$.

If $|p|$ is even, the fact that $v$ begins with a $p$ contradicts that $v \less p \wminn$, since $p \wminn$ is the smallest word beginning with $p$.  If $|p|$ is odd, then the above argument causes $\zeta$ to be of the form $\zeta = app$ for some $a$.  By Lemma \ref{onecollapsed}, this implies that $q=p^2$, contradicting the fact that $\zeta$ was obtained from a valid $-N$-segmentation. 

The proof for $t$ follows in a similar fashion.   \end{proof}

\begin{lem}\label{pattern} Let $w = \zeta w_{[n, \infty)} \in \WN$ be such that $\Pat(w, \Sigma_{-N}, n)$ is defined.
 If $w_{[x, \infty)} \gess w_{[n, \infty)}$ and there is no $1 \leq c \leq n$ such that $w_{[n, \infty)} \less w_{[c, \infty)} \less w_{[x, \infty)}$, then $\Pat(w, \Sigma_{-N}, n) = \pi$. Likewise, if $w_{[y, \infty)} \less w_{[n, \infty)}$ and there is no $1 \leq c \leq n$ such that $w_{[y, \infty)} \less w_{[c, \infty)} \less w_{[n, \infty)}$, then $\Pat(w,  \Sigma_{-N}, n) = \pi$.    \end{lem}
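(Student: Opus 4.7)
Let $\sigma := \Pat(w, \Sigma_{-N}, n)$, which is defined by hypothesis. My goal is to show $\sigma = \pi$, and I would split this into two claims: (A) for all $i, j \in [1, n-1]$, $w_{[i,\infty)} \less w_{[j,\infty)}$ if and only if $\pi_i < \pi_j$; and (B) $\sigma_n = \pi_n$. Together these force $\sigma = \pi$, since (A) says that the reductions of $\sigma$ and $\pi$ to $[1, n-1]$ coincide, and (B) pins down the remaining value.

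Claim (B) follows from (A) once the hypothesis is exploited. Since $w_{[n,\infty)}$ is $\less$-immediately below $w_{[x,\infty)}$ among the $n$ subwords and $\Pat$ is defined (so all subwords are distinct), we have $\sigma_x = \sigma_n + 1$, paralleling $\pi_x = \pi_n + 1$. Combined with (A), the rank of $\sigma_x$ among $\sigma_1, \ldots, \sigma_{n-1}$ equals the rank of $\pi_x$ among $\pi_1, \ldots, \pi_{n-1}$; since $\sigma_x$ and $\pi_x$ are respectively one greater than $\sigma_n$ and $\pi_n$, these ranks are $\sigma_n$ and $\pi_n$, forcing $\sigma_n = \pi_n$.

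The bulk of the argument is Claim (A). For a pair $(i, j) \in [1, n-1]^2$ with $i < j$, let $K$ be the smallest index at which $w_{i+K-1} \neq w_{j+K-1}$; this exists by distinctness of subwords. When both $i + K - 1$ and $j + K - 1$ lie in $[1, n-1]$, the equalities $z_{i+k-1} = z_{j+k-1}$ for $k < K$ allow $K-1$ iterated applications of Lemma~\ref{flipping}, translating $\pi_i < \pi_j$ into a parity-dependent inequality between $\pi_{i+K-1}$ and $\pi_{j+K-1}$; this matches the word-comparison condition $(-1)^K(w_{i+K-1} - w_{j+K-1}) > 0$ via the fact that the signs of $z_a - z_b$ and $\pi_a - \pi_b$ agree whenever $a, b \in [1, n-1]$ have $z_a \neq z_b$ (which is immediate from the segmentation construction). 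When the iteration instead reaches index $n$, the problem reduces to comparing $w_{[c,\infty)}$ with $w_{[n,\infty)}$ for some $c \in [1, n-1]$. The hypothesis now enters: since no subword is strictly between $w_{[n,\infty)}$ and $w_{[x,\infty)}$ and all subwords are distinct, one has $w_{[c,\infty)} \less w_{[n,\infty)} \iff w_{[c,\infty)} \less w_{[x,\infty)}$, while $\pi_x = \pi_n + 1$ yields $\pi_c < \pi_n \iff \pi_c < \pi_x$ for $c \neq n$. So this ``compare with $n$'' problem converts into a ``compare with $x$'' problem between two indices in $[1, n-1]$, to which the same iteration argument applies.

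The delicate step I expect to dwell on is the termination of this reduction when the new $(c,x)$-comparison itself reaches the tail, generating another ``compare with $n$'' step. I would handle this by induction on the resolution depth $K$: since the conversion from ``compare with $n$'' to ``compare with $x$'' produces a comparison whose resolution depth is no larger (it uses the same tail letters from position $n$ onward), the process terminates after finitely many reductions. The $t$ case follows by a symmetric argument, invoking the hypothesis that $w_{[y,\infty)}$ is $\less$-immediately below $w_{[n,\infty)}$ and using $\pi_y = \pi_n - 1$ in place of $\pi_x = \pi_n + 1$.
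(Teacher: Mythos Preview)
Your overall structure---reduce Claim~(A) via iterated applications of Lemma~\ref{flipping}, and when the iteration hits position~$n$ convert the comparison with $w_{[n,\infty)}$ into one with $w_{[x,\infty)}$---matches the paper's approach. The gap is in your termination argument.

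You assert that the conversion ``compare with $n$'' $\to$ ``compare with $x$'' produces a new comparison whose resolution depth is no larger, and conclude that the process terminates. But ``no larger'' does not give termination, and in fact the reduction can genuinely cycle. Concretely: if you reach the comparison of $w_{[x,\infty)}$ with $w_{[k,\infty)}$ for some $k>x$ and it does not resolve before position~$n$, you get $w_{[x,\,x+n-k-1]}=w_{[k,\,n-1]}$ and are reduced to comparing $w_{[x,\infty)}$ with $w_{[x+n-k,\infty)}$; applying the same step again returns you to the pair $(x,k)$. So the process loops. The paper handles this by observing that the equalities forced by the cycle make $p=z_{[x,n-1]}$ equal to a nontrivial cyclic shift of itself, hence $p$ is not primitive; then Lemma~\ref{d2} gives $p=d^2$ with $d$ primitive of odd length and $k=x+|d|$. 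At that point the paper verifies $S(x,k)$ and $S(k,x)$ directly: the case $\pi_k<\pi_x$ forces $k=y$ and $p=q^2$, contradicting validity of the segmentation; the case $\pi_x<\pi_k$ is handled using the hypothesis that no $w_{[c,\infty)}$ lies strictly between $w_{[n,\infty)}$ and $w_{[x,\infty)}$, which rules out $w_{[k,\infty)}\less w_{[x,\infty)}$ via an argument with $d^\infty$.

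Your sketch does not invoke Lemma~\ref{d2} or the validity of the segmentation, and without them the cycling case cannot be closed. The rest of your outline (including the derivation of~(B) from~(A)) is fine.
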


\begin{proof} We prove the statement for $w_{[x, \infty)}$. The one involving $w_{[y, \infty)}$ follows similarly. For $1 \leq i, j \leq n$, let $S(i, j)$ be the statement 
$$\pi_i < \pi_j \text{ implies } w_{[i, \infty)} \less w_{[j, \infty)}.$$
To show that  $\Pat(w,  \Sigma_{-N}, n) = \pi$, we will prove $S(i, j)$ for all $1 \leq i, j \leq n$ with $i \neq j$.  We consider three cases.  

\begin{enumerate}
\item Case $i = n$.  Suppose that $\pi_n < \pi_j$.  By assumption, $w_{[n, \infty)} \less w_{[x, \infty)}$.  If $j = x$, we are done.  If $j \neq x$, then $\pi_n < \pi_j$  implies that $\pi_x < \pi_j$ since $\pi_x = \pi_n + 1$.  So, if $S(x, j)$ holds, then $w_{[n, \infty)} \less w_{[x, \infty)} \less w_{[j, \infty)}$, so $S(n, j)$ must hold as well.  We have reduced $S(n, j)$ to $S(x, j)$.  Equivalently, $\neg S(n, j) \rightarrow \neg S(x, j)$, where $\neg$ denotes negation.  

\item Case $j = n$.  Suppose that $\pi_i < \pi_n$.  In particular,  $i \neq n$ and  $\pi_i < \pi_x=\pi_n+1$. By assumption, in order to prove that $w_{[i, \infty)} \less w_{[n, \infty)}$, it is enough to show that $w_{[i, \infty)} \less w_{[x, \infty)}$.   Thus, we have reduced $S(i, n)$ to $S(i, x)$. 

\item Case $i, j < n$.  Suppose that $\pi_i < \pi_j$.  Let $m$ be so that $w_{[i, i + m -1]} = w_{[j, j + m -1]}$ and $w_{i + m} \neq w_{j + m}$.  First assume that $i + m, j + m \leq n -1$.  If $m$ is even, then Lemma \ref{flipping} applied $m$ times to $\pi_{i} < \pi_{j}$ implies that $\pi_{i + m} < \pi_{j + m}$.  By Lemma \ref{flipping}, we must have $w_{i + m} \leq w_{j + m}$, and so we conclude that $w_{i + m} < w_{j + m}$.  Therefore, $w_{[i + m, \infty)} \less w_{[j + m, \infty)}$, and thus $w_{[i, \infty)} \less w_{[j, \infty)}$.
Similarly, if $m$ is odd, Lemma \ref{flipping} applied $m$ times implies that $\pi_{i + m} > \pi_{j + m}$.  Hence, by Lemma \ref{flipping} we must have $w_{i + m} > w_{j + m}$ because $w_{i + m} \neq w_{j + m}$.  Therefore, $w_{[i + m, \infty)} \gess w_{[j + m, \infty)}$, and thus $w_{[i, \infty)} \less w_{[j, \infty)}$ again. This shows that if $i + m, j + m \leq n-1$, then $S(i,j)$ holds.

Suppose now that $i + m \geq n$ or $j+m \geq n$, and let $m'$ be the minimal index such that either $i + m' = n$ or $j + m' = n$.  Suppose first that $i + m' = n$ and $m'$ is even. We claim that $S(i, j)$ reduces to $S(n, j + m')$ in this case. Indeed, suppose that $S(n, j + m')$, and let us show that $S(i,j)$ holds as well. If $\pi_i < \pi_j$, then Lemma \ref{flipping} and the fact that $w_{[i, i + m' -1]} = w_{[j, j + m' - 1]}$ gives $\pi_n = \pi_{i + m'} < \pi_{j + m'}$.  Since $S(n, j +m')$ holds, we have $w_{[n, \infty)} \less w_{[j + m', \infty)}$, which implies that $w_{[i, \infty)} \less w_{[j, \infty)}$, as desired.  Thus, $S(i, j)$ reduces to $S(n, j + m')$.  

Similarly, if $i + m' = n$ and $m'$ is odd, then $\pi_i < \pi_j$ and $w_{[i, i + m' - 1]} = w_{[j, j + m' - 1]}$ implies that $\pi_{n} = \pi_{i + m'} > \pi_{j + m'}$ by Lemma \ref{flipping}. If $S(j +m',n)$ holds, then $w_{[n, \infty)} \gess w_{[j + m', \infty)}$, which implies that $w_{[i, \infty)} \less w_{[j, \infty)}$. Again, $S(i, j)$ reduces to $S(j + m', n)$ in this case.  

Now consider the case when $j + m' = n$ and $m'$ is odd. Then $\pi_i < \pi_j$ and $w_{[i, i + m' -1]} = w_{[j, j + m' - 1]}$ implies that $\pi_{i + m'} > \pi_{j + m'} = \pi_n$ by Lemma \ref{flipping}.  Therefore, $S(i, j)$ reduces to $S(n, i + m')$ in this case.  Finally, if $j + m' = n$ and $m'$ is even, $S(i, j)$ reduces to $S(i + m', n)$ by a similar argument.  
\end{enumerate}

In order to conclude that $S(i, j)$ holds for every $i,j$, we must show that the above process of reductions eventually terminates. Suppose for contradiction that the process goes on indefinitely.  Then at some point we would reach $S(x, k)$ with $k > x$, or $S(k, x)$ with $k > x$.    

\begin{itemize}
\item Suppose that we reach $S(x, k)$ with $k > x$. Since we assumed that the process does not terminate, case (3) above implies that $w_{[x, x+ n-k -1]} = w_{[k, n - 1]}$.
If $n-k$ is odd, then we get $\neg S(x,k) \rightarrow \neg S(n, x + n-k) \rightarrow \neg  S(x, x + n-k)$, using cases (3) and (1). If $n-k$ is even, then $\neg S(x,k) \rightarrow \neg S(x + n-k,n) \rightarrow \neg  S(x + n-k,x)$, using cases (3) and (2). 
\item  Suppose we reach $S(k, x)$ with $k > x$.
Since the process does not terminate, case (3) implies that $w_{[k, n - 1]}=w_{[x, x+ n-k -1]}$.
If $n-k$ is odd, then we get $\neg S(k,x) \rightarrow \neg S(x + n-k,n) \rightarrow \neg  S(x + n-k,x)$. If $n-k$ is even, then $\neg S(k,x) \rightarrow \neg S(n,x + n-k) \rightarrow \neg  S(x,x + n-k)$.
\end{itemize}

In all cases, we conclude that $w_{[x, x+ n-k -1]} = w_{[k, n - 1]}$ and we reach $S(x,x + n-k)$ or $S(x + n-k,x)$. Now we can repeat the argument with $x+n-k$ playing the role of $k$, to deduce that $w_{[x, k -1]} = w_{[x+n-k, n - 1]}$ and obtain a reduction back to $S(x,k)$ or $S(k,x)$.

Combining the above equalities, we obtain $w_{[x, x + n - k - 1]} w_{[x + n -k - 1, n -1]} = w_{[x, k-1]} w_{[k, n-1]}  = p$.   Since $p$ is equal to some of its non-trivial cyclic shifts, it follows that $p$ is not primitive.  Thus, in the case that $p$ is primitive, we have verified the statements $S(x, k)$ and $S(k, x)$.  

Therefore, by Lemma \ref{d2}, we have $p = d^2$, where $d = w_{[x, k - 1]} = w_{[k, n-1]}$ and $|d|$ is odd.  Let $d = w_{[x, k -1]}$, so that we may write $p = d^2$.   It remains to verify the statements $S(x, k)$ and $S(k, x)$ in this case. 
 
To verify $S(k, x)$, suppose that $\pi_k < \pi_x$.  Since $\pi_n = \pi_x - 1$ and $k \neq n$, we have $\pi_{k} < \pi_n < \pi_x$.   We claim that, in this case, $\pi_{k} = \pi_y$.  
Suppose for contradiction that  $\pi_{k} \neq \pi_{n} - 1= \pi_y$.  Let $1 \leq h < n$  be the largest index such that $\pi_{k} < \pi_{h} < \pi_x$.

Consider first the case $h > k$. Since $w_{[k, n-1]} = w_{[x, k-1]}$, Lemma \ref{flipping} applied $n-k$ times to $\pi_k < \pi_x$ implies that \begin{equation}\label{eq:1}w_{[k, k + n - h - 1]} = w_{[h, n-1]} = w_{[x, x + n - h -1]}.\end{equation}   If $n - h$ is odd, then Lemma \ref{flipping} gives $\pi_{k + n - h} > \pi_{n} > \pi_{x + n - h}$, and so $\pi_{k + n -h} > \pi_x > \pi_{x + n - h}$ as well. Applying Lemma \ref{flipping} $h-k$ more times, we conclude that \begin{equation}\label{eq:2}w_{[k + n - h, n-1]} = w_{[x, x+ h - k - 1]}= w_{[k, h-1]},\end{equation} where in the last equality we used that $w_{[k, n-1]} = w_{[x, k-1]}$.
On the other hand, if $n - h$ is even, we get $\pi_{k + n - h} < \pi_{n} < \pi_{x + n - h}$ and $\pi_{k + n -h} < \pi_x < \pi_{x + n - h}$, from where Equation~\eqref{eq:2} holds as well. Combining Equations~\eqref{eq:1} and~\eqref{eq:2}, we get $d = w_{[k, n-1]} = w_{[k, h - 1]} w_{[h, n-1]} =  w_{[k + n - h, n -1]} w_{[k, k + n - h -1]}$, which states that $d$ is equal to one of its non-trivial cyclic shifts, thus contradicting that it is primitive.  

Now consider the case $h < k$.  Applying Lemma \ref{flipping} $|d|$ times to the inequalities $\pi_k < \pi_{h} < \pi_x$, we obtain $\pi_n > \pi_{h + k - x} > \pi_{k}$.  Therefore, $\pi_x > \pi_{h + k - x} > \pi_{k}$, a contradiction to the fact that we chose $h$ to be the largest index such that $\pi_{k} < \pi_{h} < \pi_x$. 

It follows that there is no index $h \neq n$ such that $\pi_{k} < \pi_{h} < \pi_x$.  We conclude that $\pi_{k} = \pi_y$, from which it follows that $d=q$ and $p = q^2$ .  However, this contradicts that $\zeta$ comes from a valid $-N$-segmentation of $\hat{\pi}$.
Since the assumption $\pi_k < \pi_x$ leads to a contradiction, the statement $S(k,x)$ trivially holds.

To verify $S(x, k)$, suppose now that $\pi_x < \pi_k$. We must show that $w_{[x, \infty)} \less w_{[k, \infty)}$.  Suppose to the contrary that $w_{[k, \infty)} \less w_{[x, \infty)}$.  Then, by assumption, we must have $w_{[k, \infty)} \less w_{[n, \infty)} \less w_{[x, \infty)}$.  Hence,
\begin{equation}\label{eq:ineqd}
d^2 w_{[n, \infty)} \less w_{[n, \infty)} \less d w_{[n, \infty)}.
\end{equation}
Therefore, $w_{[n, \infty)}$ must begin with $d$, and by canceling equal prefixes, we determine that the only option would be $w_{[n, \infty)} = d^\infty$, which does not satisfy the inequalities~\eqref{eq:ineqd}.  We conclude that $w_{[x, \infty)} \less w_{[k, \infty)}$, and so $S(x, k)$ holds.

We have shown that if the above process of reductions does not terminate, then it reaches $S(x,k)$ or $S(k,x)$ where $k-x=n-k$ and $p=d^2$. And we have shown that both $S(x,k)$ or $S(k,x)$ hold in this case. It follows that $S(i, j)$ holds for all $1 \leq i, j \leq n$ with $i \neq j$.  
\end{proof}

\begin{proof} [Proof of Theorem \ref{negativeshift}] 
We will show that $\pi \in \Al(\Sigma_{-N})$ if and only if $N \geq 1 + \asc(\hat{\pi}) + \epsilon(\hat{\pi})$. 

Suppose first that $\pi \in \Al(\Sigma_{-N})$. By Lemma \ref{onea}, $\hat{\pi}$ has a valid $-N$-segmentation. By Lemma \ref{geq}, such a valid segmentation exists if and only if $N \geq 1 + \asc(\hat{\pi}) + \epsilon(\hat{\pi})$.  Therefore, $\pi \in \Al(\Sigma_{-N})$ implies that $N \geq 1 + \asc(\hat{\pi}) + \epsilon(\hat{\pi})$.

For the other direction, by Lemma~\ref{lem:monotoneN}, it is enough to show that if we let $N=1 + \asc(\hat{\pi}) + \epsilon(\hat{\pi})$,
then $\pi \in \Al(\Sigma_{-N})$. Right before Lemma~\ref{onecollapsed}, we construct words $s, t\in\WN$ (at least one of which is always defined), and in Lemmas \ref{twopointnine}, \ref{nothingbetween} and \ref{pattern} we show that they induce $\pi$.
\end{proof}

In~\cite{EMupcoming} we use this analysis to count the number of permutations of length $n$ realized by $\Sigma_{-N}$, and we apply similar arguments to signed shifts, obtaining bounds on the number of patterns realized by the tent map.

\section{$-\beta$-Expansions}\label{sec:beta}

For any $\beta > 1$, the {\em $-\beta$-expansion} of $x \in (0, 1]$ is the sequence $\varepsilon_1(x)\varepsilon_2(x)\dots$ defined by $\varepsilon_i(x) = \lfloor \beta \Tb^{i-1}(x) \rfloor$, with $\Tb$ given by Equation~\eqref{eq:Tb}. It satisfies
$$x = - \sum_{i = 1}^\infty \frac{\varepsilon_i(x) + 1}{(-\beta)^i}.$$ Throughout this section, let $N = \lfloor \beta \rfloor +1$ and note that $\varepsilon_i(x) \in \{0, 1, \dots N{-}1 \}$ for all $i$.

Let $\Wb^0\subseteq\WN$ be the set of $-\beta$-expansions of numbers in $(0,1]$, and let $\betaone = a_1 a_2 a_3 \dots$ denote the $-\beta$-expansion of $1$. Ito and Sadahiro~\cite{ItoS} characterized the set  $\Wb^0$ as follows.

\begin{thm} [\cite{ItoS}] \label{thm:Wb} If  $\betaone$ is not periodic of odd length, then
$$\Wb^0 = \{ w : 0\betaone  \less w_{[k,\infty)}  \lesseq \betaone \text{ for all } k \geq 1 \}.$$
If $\betaone = (a_1 a_2 \dots a_{2r + 1})^\infty$ 
for some $r \geq 0$, and $r$ is minimal with this property, then 
$$\Wb^0  = \{ w : (0a_1 \dots a_{2r}(a_{2r+1} - 1))^\infty \less w_{[k,\infty)} \lesseq \betaone \text{ for all } k \geq 1\}.$$ \end{thm}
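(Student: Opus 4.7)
The plan is to prove both inclusions, using the extended map $\psi(w) = -\sum_{j \ge 1}(w_j+1)/(-\beta)^j$ from $\WN$ to $\mathbb{R}$ (generalizing the one from Section~\ref{sec:-N}) together with the conjugation $\Tb(\psi(w)) = \psi(w_{[2,\infty)})$, which holds whenever the iterate is well defined and $\psi(w) \in (0,1]$. I would first verify two preliminary facts: (a) $\psi$ is weakly order-preserving on $\WN$, so that $v \less w$ implies $\psi(v) \le \psi(w)$, with a proof paralleling that of the corresponding lemma in Section~\ref{sec:-N}; and (b) by direct summation, $\psi(0\betaone) = 0$ in general, while in the odd-periodic case one also has $\psi(c^\infty) = 0$ for $c = 0\,a_1\cdots a_{2r}(a_{2r+1}-1)$, and moreover $0\betaone \less c^\infty$, since the two sequences agree through position $2r+1$ and differ by $-1$ at the even position $2r+2$.

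For the inclusion $\Wb^0 \subseteq \{w : \text{inequalities hold}\}$, fix $x \in (0,1]$ and let $w$ be its $-\beta$-expansion; each shift $w_{[k,\infty)}$ is the expansion of $\Tb^{k-1}(x) \in (0,1]$, so it suffices to check the inequalities on the unshifted word. The upper bound $w \lesseq \betaone$ follows from $\psi(w) = x \le 1 = \psi(\betaone)$ via (a), and the lower bound $w \gess 0\betaone$ from $\psi(w) > 0 = \psi(0\betaone)$, with the sharpening to $w \gess c^\infty$ in the odd-periodic case coming from the fact that $c^\infty$ is the maximum element of $\psi^{-1}(0)$ in the alternating order.

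For the reverse inclusion, given $w$ satisfying the hypotheses, set $x = \psi(w)$; the inequalities combined with (a) force $x \in (0,1]$. To verify that $w$ equals the expansion of $x$, I would argue by induction on the coordinate: applying the argument to $w_{[k,\infty)}$ (which also satisfies the hypotheses) reduces everything to the base case $\lfloor \beta x\rfloor = w_1$. Using $\beta x = (w_1+1) + \psi(w_{[2,\infty)})$, this becomes $\psi(w_{[2,\infty)}) \in (-1,0]$, which is again a consequence of (a) applied to $w_{[2,\infty)}$ against the stated upper and lower bounds.

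The hardest step will be justifying the odd-periodic case of (b), together with the claim that $c^\infty$ is the maximum in $\less$ of $\psi^{-1}(0)$. The subtlety is that merely enforcing $w_{[k,\infty)} \gess 0\betaone$ is not sufficient to conclude $\psi(w_{[k,\infty)}) > 0$: in the odd-periodic case, the sequence $0\betaone$ lies strictly below $c^\infty$ in the alternating order, even though both map to $0$ under $\psi$, so there is an intermediate band of sequences $w$ with $0\betaone \less w \lesseq c^\infty$ and $\psi(w) = 0$ that must be excluded. The periodicity identity $\betaone = (a_1\cdots a_{2r+1})^\infty$ is exactly what is used both to produce the alternative representation $c^\infty$ and to see that no other representation of $0$ can lie strictly above it in the alternating order.
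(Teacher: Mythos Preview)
The paper does not contain a proof of this statement: Theorem~\ref{thm:Wb} is quoted from Ito and Sadahiro~\cite{ItoS} and stated without proof. There is therefore nothing in the paper to compare your proposal against.

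As a standalone sketch, your outline is reasonable but has a couple of soft spots. First, a minor computational slip: from $\beta\,\psi(w) = (w_1+1) - \psi(w_{[2,\infty)})$ one sees that $\lfloor \beta x\rfloor = w_1$ is equivalent to $\psi(w_{[2,\infty)}) \in (0,1]$, not $(-1,0]$. Second, and more substantively, the step you flag as ``hardest'' is genuinely the crux and is not yet proved in your proposal: from the hypothesis $w_{[k,\infty)} \gess 0\betaone$ (or $\gess c^\infty$ in the odd-periodic case), weak order-preservation of $\psi$ only yields $\psi(w_{[k,\infty)}) \ge 0$, and you need strict positivity to run the induction. Knowing that $c^\infty$ (respectively $0\betaone$) is the $\less$-maximum of $\psi^{-1}(0)$, together with the assumption that \emph{every} shift of $w$ lies strictly above it, is what is required; establishing that maximality (and, in the non-odd-periodic case, that no sequence strictly above $0\betaone$ within the admissible region maps to $0$) is where the real work lies, and you have only asserted it. If you intend to write out a full proof, that is the lemma to isolate and prove carefully.
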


It follows from the above theorem that if $w \in \Wb^0$, then $w_{[k, \infty)} \in \Wb^0$ for any $k \geq 1$.  In particular, shifts of $\betaone$ satisfy ${\mathfrak{a}_{\beta}}_{[k, \infty)} \leq \betaone$ for all $k \geq 1$.

Given an infinite word $w = w_1 w_2 \dots\in\WN$, define the series
$$ f_w(\beta) = - \sum_{j=1}^\infty \frac{w_j+1}{(-\beta)^j}.$$
Note that $f_w(\beta)$ is convergent for $\beta >1$.  It is shown in~\cite{ItoS} that this map is order-preserving in the following sense.

\begin{lem} [\cite{ItoS}] \label{orderinWb0} Let $v, w \in \Wb^0$.  If $v \less w$, then $f_v(\beta) < f_w(\beta)$.
\end{lem}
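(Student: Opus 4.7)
The plan is to mirror, in the real-base setting, the earlier proof that $\psi$ is order-preserving on $\WN^0$: take the first index where $v$ and $w$ differ, cancel the common prefix in the two series defining $f_v(\beta)$ and $f_w(\beta)$, and control the remaining tails using the range of $f$ on $\Wb^0$.

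Let $i$ be the smallest index with $v_i \ne w_i$, so that $(-1)^i(v_i - w_i) \ge 1$ by the definition of $\less$. Using the identity $-\sum_{j > i}(u_j+1)/(-\beta)^j = f_{u_{[i+1,\infty)}}(\beta)/(-\beta)^i$, valid for any infinite word $u$, I would rewrite
$$f_w(\beta) - f_v(\beta) = \frac{1}{\beta^i}\Bigl[(-1)^i(v_i - w_i) + (-1)^i\bigl(f_{w_{[i+1,\infty)}}(\beta) - f_{v_{[i+1,\infty)}}(\beta)\bigr)\Bigr].$$
The remark following Theorem~\ref{thm:Wb} guarantees that both $w_{[i+1,\infty)}$ and $v_{[i+1,\infty)}$ are again in $\Wb^0$, so the tail values on the right are again values of $f$ on words in $\Wb^0$.

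To finish, I would use that every $u \in \Wb^0$ is by definition the $-\beta$-expansion of some $x \in (0, 1]$, and the identity $x = -\sum_{j}(\varepsilon_j(x)+1)/(-\beta)^j$ recalled at the start of the section gives $f_u(\beta) = x \in (0,1]$. Hence $f_{w_{[i+1,\infty)}}(\beta) - f_{v_{[i+1,\infty)}}(\beta)$ lies strictly in $(-1, 1)$, so $(-1)^i$ times this quantity is strictly greater than $-1$. Combined with $(-1)^i(v_i - w_i) \ge 1$, the bracket is strictly positive, yielding $f_v(\beta) < f_w(\beta)$. The main point worth flagging is where strictness comes from: in the integer-base analogue the corresponding tail bound was only $\le 1$ and strictness required ruling out the problematic words in $\WN \setminus \WN^0$, whereas here strict inequality is automatic because the endpoint $0$ is excluded from the range of $f$ on $\Wb^0$, so no case analysis beyond the computation above is needed.
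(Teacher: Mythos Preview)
Your proof is correct. The paper does not actually give its own proof of this lemma---it simply cites it from~\cite{ItoS}---so there is nothing to compare against directly; however, your argument is exactly the computation the paper carries out in the proofs of the analogous Lemma~2.1 (for $\psi$ on $\WN^0$) and Lemma~\ref{monotone} (on $\Wb$), and your observation that strictness is automatic here because $f$ takes values in $(0,1]$ on $\Wb^0$ is the right way to finish.
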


If $w\in\Wb^0$ is the $-\beta$-expansion of $x\in(0,1]$, then $f_w(\beta)=x$, and so the inverse of the map 
\begin{equation}\label{eq:fw} \Wb^0 \rightarrow (0, 1], \quad w\mapsto f_{w}(\beta) \end{equation}
is the map that associates each $x\in(0,1]$ to its $-\beta$-expansion $\varepsilon_1(x)\varepsilon_2(x)\dots$.

In terms of words, the {\em negative $\beta$-shift} is defined as the map
$$\Sb: \Wb^0 \to \Wb^0, \quad w_1w_2w_3\dots \to w_2w_3\dots,$$
with the order $\less$ on $\Wb^0$. We will write $\Sigma_{-}$ when we do not need to specify the domain.

\begin{lem} [\cite{ItoS}] \label{itos} The map $\Sb$ on $(\Wb^0, \less)$ and the map $\Tb$ on $((0, 1], < )$ are order-isomorphic, via the order-isomorphism in Equation~\eqref{eq:fw}.  \end{lem}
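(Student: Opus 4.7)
The plan is to verify three things: that the map $\Phi: \Wb^0 \to (0,1]$ defined by $\Phi(w) = f_w(\beta)$ is (a) a bijection, (b) order-preserving, and (c) a conjugacy between $\Sb$ and $\Tb$.

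For (a), surjectivity is essentially free from the definition: the set $\Wb^0$ was defined as the collection of $-\beta$-expansions of numbers in $(0,1]$, and if $w = \varepsilon_1(x)\varepsilon_2(x)\cdots$ is the expansion of $x$ then the identity displayed just before Theorem~\ref{thm:Wb} gives $f_w(\beta) = x$, so $\Phi(w) = x$. For injectivity, suppose $v,w \in \Wb^0$ with $v \neq w$. Since $\less$ is a total order on $\WN$, either $v \less w$ or $w \less v$, and Lemma~\ref{orderinWb0} then yields $f_v(\beta) \neq f_w(\beta)$. Part (b) is exactly Lemma~\ref{orderinWb0}, so there is nothing to do.

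The main content is (c), the identity $\Tb \circ \Phi = \Phi \circ \Sb$ on $\Wb^0$. I would prove this by a direct computation. Using $(-\beta)^j = (-1)^j\beta^j$, a term-by-term manipulation gives
\[
\beta f_w(\beta) \;=\; \beta\Bigl(-\sum_{j=1}^\infty \frac{w_j+1}{(-\beta)^j}\Bigr) \;=\; (w_1+1) \;-\; \Bigl(-\sum_{k=1}^\infty \frac{w_{k+1}+1}{(-\beta)^k}\Bigr) \;=\; (w_1+1) \;-\; f_{\Sb(w)}(\beta).
\]
Since $\Sb(w) = w_{[2,\infty)} \in \Wb^0$ (noted right after Theorem~\ref{thm:Wb}), we have $f_{\Sb(w)}(\beta) \in (0,1]$, so $\beta f_w(\beta) \in [w_1, w_1+1)$, forcing $\lfloor \beta f_w(\beta)\rfloor = w_1$ and $\{\beta f_w(\beta)\} = 1 - f_{\Sb(w)}(\beta)$. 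Plugging into the definition of $\Tb$ from~\eqref{eq:Tb} gives $\Tb(f_w(\beta)) = 1 - \{\beta f_w(\beta)\} = f_{\Sb(w)}(\beta)$, which is the desired conjugacy.

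The only slightly delicate point is the boundary case $f_{\Sb(w)}(\beta) = 1$: there $\beta f_w(\beta) = w_1$ is an integer, so $\{\beta f_w(\beta)\} = 0$ and $\Tb(f_w(\beta)) = 1$, matching $f_{\Sb(w)}(\beta) = 1$. This is really the only place where the precise choice of definition of $\Tb$ at the endpoints matters, and it is handled by the fact that the codomain is $(0,1]$ rather than $[0,1)$. Everything else is algebraic manipulation of the series, and no further dynamical input beyond Lemma~\ref{orderinWb0} and the characterization of $\Wb^0$ in Theorem~\ref{thm:Wb} is required.
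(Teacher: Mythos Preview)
The paper does not actually supply a proof of this lemma: it is quoted from Ito and Sadahiro~\cite{ItoS} without argument. Your verification is correct and self-contained, and it follows exactly the template the paper uses in Section~\ref{sec:-N} for the integer case (the computation showing $M_{-N}\circ\psi=\psi\circ\Sigma_{-N}$ together with Lemma~2.1). The series identity $\beta f_w(\beta)=(w_1+1)-f_{\Sb(w)}(\beta)$ is the right computation, and your use of $\Sb(w)\in\Wb^0$ to pin down $\lfloor\beta f_w(\beta)\rfloor=w_1$ is clean; the boundary case $f_{\Sb(w)}(\beta)=1$ is handled correctly by the convention that $\Tb$ maps into $(0,1]$.
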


It will be convenient to define $\Sb$ in a larger domain $\Wb\supseteq\Wb^0$, as follows. 

\begin{defn}\label{def:Wb} Let 
$$\Wb = \{ w\in\WN : 0\leq  f_{w_{[k, \infty)}}(\beta) \leq 1\text{ for all } k \geq 1 \}.$$
Moreover, define $\wmax$ and $\wmin$ to be the largest and the smallest words in $\Wb$ with respect to $\less$, respectively. 
\end{defn}

By the above definition, if a word $w$ is in $\Wb$, then so are all its shifts $w_{[k, \infty)}$ for $k\ge1$.
In the rest of the paper we consider $\Wb$ to be the domain of $\Sb$. Thus, we define $$\Al(\Sb)=\bigcup_{n\ge0}\{\Pat(w,\Sb,n):w\in\Wb\}.$$ This choice of domain, which will simplify some of our proofs, does not affect our results about the smallest $\beta$ needed to realize a pattern,
as shown in Proposition~\ref{domain}.

Since $w \lesseq \wmax$ for all $w \in \Wb$ by definition, we have that $0 \wmax \lesseq w$ for all $w \in \Wb$.  Therefore, $\wmin = 0 \wmax$ is the smallest word in $\Wb$.

In the case that $\beta = K$ is an integer, the $-K$-expansion of $1$ is $K^\infty$,  and so $\Omega_K = K^\infty$ and $\omega_K = 0 K^\infty$.  In particular, $\mathcal{W}_K \subsetneq \mathcal{W}_{-K}$. 
This discrepancy  is a result of defining the reverse shift in Section~\ref{sec:-N} to agree with the definition of signed shifts from~\cite{Amigosigned,ArcEli}, while defining the negative $\beta$-shift according to the constructions in \cite{ItoS,Steiner} in order to be able to apply the results in these papers. Next we show that the allowed patterns of $\Sigma_{-K}$ are the same regardless of whether we take $\mathcal{W}^0_{K}$ or $\mathcal{W}^0_{-K}$ to be its domain.

\begin{lem}
In the case that $\beta = K \geq 2$ is an integer, 
$$\Al(\Sigma_{-K}|_{\mathcal{W}^0_{K}}) = \Al(\Sigma_{-K}|_{\mathcal{W}^0_{-K}}).$$ 
\end{lem}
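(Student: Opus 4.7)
The plan is to reduce the claim to an equivalent statement about the interval maps via the two order-isomorphisms already established. By the map $\psi:\mathcal{W}_K^0\to[0,1]$ from Section~\ref{sec:-N}, which conjugates $\Sigma_{-K}$ on $\mathcal{W}_K^0$ to $M_{-K}$, we have $\Al(\Sigma_{-K}|_{\mathcal{W}_K^0})=\Al(M_{-K})$. By Lemma~\ref{itos}, the map $w\mapsto f_w(K)$ conjugates $\Sigma_{-K}$ on $\mathcal{W}_{-K}^0$ to $T_{-K}$ on $(0,1]$, so $\Al(\Sigma_{-K}|_{\mathcal{W}_{-K}^0})=\Al(T_{-K})$. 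Thus the lemma is equivalent to the assertion $\Al(M_{-K})=\Al(T_{-K})$, and the proof proceeds by exploiting the fact that $M_{-K}$ and $T_{-K}$ agree on $(0,1)$ and differ only on the boundary ($T_{-K}(1)=1$, whereas $M_{-K}(1)=0$ and $M_{-K}(0)=1$).

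For the easy direction $\Al(T_{-K})\subseteq\Al(M_{-K})$, I would argue that if $x\in(0,1]$ induces a pattern $\pi$ of length $n$ under $T_{-K}$, then the fixed point property $T_{-K}(1)=1$ forces the orbit to stay in $(0,1)$ for the first $n-1$ iterates (otherwise two consecutive iterates would equal $1$). Thus $T_{-K}^i(x)=M_{-K}^i(x)$ for all $i<n$, and the same pattern is realized by $M_{-K}$. For the reverse inclusion, suppose $\pi$ of length $n$ is induced by $M_{-K}$ at $x$. A brief analysis, using that $M_{-K}$ maps $(0,1)$ into $(0,1]$ (so $0$ can only appear one step after $1$) and that $M_{-K}$-orbits reaching $\{0,1\}$ are $2$-periodic thereafter, shows that for the pattern to be well-defined the orbit must have one of two shapes: either it stays in $(0,1]$, in which case it coincides with the $T_{-K}$-orbit and $\pi\in\Al(T_{-K})$; or the last three iterates are $M_{-K}^{n-3}(x)=j/K$, $M_{-K}^{n-2}(x)=1$, $M_{-K}^{n-1}(x)=0$ for some $j\in\{1,\dots,K-1\}$.

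The remaining case is the only real obstacle. Here $T_{-K}^{n-2}(x)=T_{-K}^{n-1}(x)=1$, so $T_{-K}$ has a repeat at $x$ and does not realize $\pi$ there. I would resolve this by a perturbation argument. The first $n-3$ iterates of $x$ avoid the break points $i/K$ of $T_{-K}$ (otherwise an earlier repeat would occur in the $M_{-K}$-orbit), so $T_{-K}^{n-3}$ is affine in a neighborhood of $x$ with nonzero slope. Choosing $x'$ on the correct side of $x$ so that $T_{-K}^{n-3}(x')=j/K+\eta$ with $\eta>0$ arbitrarily small, the piecewise-linear formula for $T_{-K}$ gives $T_{-K}^{n-2}(x')=1-K\eta\in(0,1)$ and $T_{-K}^{n-1}(x')=K^2\eta\in(0,1)$. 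For $\eta$ small, these three values are close to $j/K$, $1$, and $0$ respectively, so the relative order of the full $T_{-K}$-orbit of $x'$ agrees with that of the $M_{-K}$-orbit of $x$, and therefore $\Pat(x',T_{-K},n)=\pi$. The subtlety is verifying that the correct side of $x$ is available — this follows from the nonzero slope of $T_{-K}^{n-3}$ together with the fact that approaching $j/K$ from above is precisely what sends the next iterate to values just below $1$ rather than just above $0$ — and that $\eta$ can be taken small enough not to disturb the ordering of the earlier orbit entries, which is immediate by continuity.
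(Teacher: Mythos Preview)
Your argument is correct and takes a genuinely different route from the paper's proof. The paper works entirely on the symbolic side: it computes $\mathcal{W}_{-K}^0$ explicitly from Theorem~\ref{thm:Wb} (words over $\{0,\dots,K\}$ in which a letter $K$ forces the tail $K^\infty$, and tails $(0(K{-}1))^\infty$ are forbidden), identifies the symmetric difference $\mathcal{W}_{-K}^0 \triangle \mathcal{W}_K^0$ as consisting only of words with eventually periodic tails $K^\infty$ or $((K{-}1)0)^\infty$, and then replaces those tails by suitable finite modifications to move a pattern-inducing word from one space to the other. Your approach instead pushes everything through the two order-isomorphisms to reduce to $\Al(M_{-K})=\Al(T_{-K})$ and then argues geometrically with a continuity/perturbation argument on the interval; the only nontrivial case (orbit ending $j/K,\,1,\,0$ under $M_{-K}$) is handled by sliding the $(n{-}3)$rd iterate to $j/K+\eta$ and reading off $1-K\eta$ and $K^2\eta$ for the last two. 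The paper's method is more combinatorial and self-contained within the word framework it has been developing; yours is shorter once the two conjugacies are in hand and makes transparent \emph{why} the difference between the two domains is inessential (it lives on a measure-zero set where the interval maps disagree). One small omission: your case split tacitly assumes $n\ge 3$ (you invoke $M_{-K}^{n-3}(x)$), and you should also note that $x=0$ itself is excluded for $n\ge 3$; both are trivial to dispatch but worth a sentence.
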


\begin{proof}  The $-K$-expansion of $1$ is $\mathfrak{a}_{K} = K^\infty$, and in this case Theorem \ref{thm:Wb} states that 
$$\mathcal{W}_{-K}^0 = \{w :  (0 (K{-}1))^\infty \less w_{[i, \infty)} \lesseq K^\infty \text{ for all } i \geq 1 \}.$$
Thus, $\mathcal{W}_{-K}^0$ consists of words over the alphabet $\{0,1,\dots,K\}$ with some restrictions. 
The first inequality implies that words cannot contain the string $0K$ or end in $(0(K{-}1))^\infty$.   
The second inequality implies that if $w\in\mathcal{W}_{-K}^0$ is such that $w_i = K$ for some $i$, then we must have 
$w = w_1 w_2 \dots w_{i -1} K^\infty$, and  $w_{i - 1} \neq 0$ in order to avoid $0K$.  It follows that
\begin{multline*}
\mathcal{W}_{-K}^0 = \mathcal{W}_K \cup \{ w: w = w_1 w_2 \dots w_i K^\infty \text{ for some } i \geq 0, \text{ with } w_i \neq 0\text{ and }w_j\leq K-1\text{ for }1\le j\le i\}\\ 
 \setminus \{  w: w=w_1 w_2 \dots w_i (0 (K{-}1))^\infty \text{ for some } i \geq 0\}.
 \end{multline*}
On the other hand, recall that in Equation~\eqref{eq:WN0} we defined
$$\mathcal{W}_{K}^0 = \mathcal{W}_{K} \setminus \{  w: w=w_1 w_2 \dots w_i (0 (K{-}1))^\infty  \text{ and } w_i \neq K{-}1, \text{ for some } i \geq 1 \}. $$
Let $w \in \mathcal{W}_{K}^0 \setminus \mathcal{W}_{-K}^0$.  Then $w = w_1 w_2 \dots w_{j} ((K{-}1)0)^\infty$ for some $j \geq 0$ and $w_j \neq 0$.  If $j < n-2$, then the pattern of length $n$ for $w$ is undefined.  If $j \geq n-2$, then $w' = w_1 w_2 \dots w_j ((K{-}1)0)^n 0^\infty \in \mathcal{W}_{-K}^0$ induces the same pattern of length $n$ as $w$.  Hence, $\Al(\Sigma_{-K}|_{\mathcal{W}_{K}^0}) \subseteq \Al(\Sigma_{-K}|_{\mathcal{W}_{-K}^0})$. 

Let now $w \in \mathcal{W}_{-K}^0 \setminus \mathcal{W}_{K}^0$.  Then $w = w_1 w_2 \dots w_{i} K^\infty$ for some $i\ge0$, with $w_i \neq 0$.  If $i < n-1$, then the pattern for $w$ is not defined.  If $i \geq n-1$, then the word $w' = w_1 w_2 \dots w_i ((K{-}1)0)^\infty \in \mathcal{W}_{K}^0$ induces the same pattern of length $n$ as $w$.   Hence, $\Al(\Sigma_{-K}|_{\mathcal{W}_{K}^0}) \supseteq \Al(\Sigma_{-K}|_{\mathcal{W}_{-K}^0})$.  
\end{proof}

\begin{defn}  A $-\beta$-representation of $x \in [0, 1]$ is any word $w \in \WN$ that satisfies $f_w(\beta) = x$ and $f_{w_{[k, \infty)}}(\beta) \in [0, 1]$ for all $k \geq 1$.  \end{defn}

By definition, $\Wb$ is the set of all $-\beta$-representations of numbers in $[0, 1]$. We will see that even though the word $\wmax$ is always a $-\beta$-representation of $1$, it is not always a $-\beta$-expansion. 
The following lemma characterizes which $-\beta$-representations are in fact $-\beta$-expansions.

\begin{lem} \label{isexpansion} If $w \in \Wb$ is such that $f_{w_{[k, \infty)}}(\beta) \in (0, 1]$ for all $k \geq 1$, then $w \in \Wb^0$.  \end{lem}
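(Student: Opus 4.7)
The plan is to set $x=f_w(\beta)$, which lies in $(0,1]$ by hypothesis, and show that $w$ is precisely the $-\beta$-expansion $\varepsilon_1(x)\varepsilon_2(x)\dots$ of $x$. Since $\Wb^0$ was defined as the set of $-\beta$-expansions of numbers in $(0,1]$, this immediately places $w$ in $\Wb^0$.

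The key identity, read off directly from the definition $f_v(\beta)=-\sum_{j\ge 1}(v_j+1)/(-\beta)^j$, is
$$\beta f_v(\beta) = (v_1+1) - f_{v_{[2,\infty)}}(\beta)$$
for any $v\in\WN$. Applying this to each shift $w_{[k,\infty)}$ and setting $x_k := f_{w_{[k,\infty)}}(\beta)$, I obtain
$$\beta x_k = (w_k+1) - x_{k+1}.$$
By hypothesis $x_{k+1}\in(0,1]$, so $\beta x_k \in [w_k, w_k+1)$; hence $\lfloor\beta x_k\rfloor = w_k$ and, from the definition of $\Tb$,
$$\Tb(x_k) = -\beta x_k + \lfloor\beta x_k\rfloor + 1 = x_{k+1}.$$

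A straightforward induction on $k$ using $x_1=x$ then gives $\Tb^{k-1}(x)=x_k$ for every $k\ge 1$, and therefore
$$\varepsilon_k(x) = \lfloor \beta \Tb^{k-1}(x)\rfloor = \lfloor \beta x_k\rfloor = w_k.$$
Thus $w$ is the $-\beta$-expansion of $x\in(0,1]$, which by definition means $w\in\Wb^0$.

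The only subtle point is the boundary case $x_{k+1}=1$, which would appear to threaten the computation $\lfloor\beta x_k\rfloor = w_k$. However, it corresponds exactly to $\beta x_k = w_k$, an integer, and the floor equals $w_k$ regardless. No separate argument is needed, and the proof requires no appeal to Theorem~\ref{thm:Wb}: it follows purely from manipulating the series $f_w(\beta)$ and the definition of $\Tb$.
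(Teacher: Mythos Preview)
Your proof is correct. It takes a somewhat different route from the paper's own argument: the paper lets $v$ be the $-\beta$-expansion of $x=f_w(\beta)$ and shows $w=v$ by contradiction, comparing the two series at the first index where they differ and noting that $|v_i-w_i|\ge 1$ while the tails satisfy $|f_{w_{[i+1,\infty)}}(\beta)-f_{v_{[i+1,\infty)}}(\beta)|<1$ (both lie in $(0,1]$), so the difference cannot vanish. Your argument is instead a direct verification that each digit $w_k$ equals $\varepsilon_k(x)$, using the identity $\beta x_k=(w_k+1)-x_{k+1}$ together with the hypothesis $x_{k+1}\in(0,1]$ to pin down $\lfloor\beta x_k\rfloor=w_k$ and $\Tb(x_k)=x_{k+1}$. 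The paper's approach is essentially a uniqueness argument for $-\beta$-representations with tails in $(0,1]$, while yours recovers the expansion algorithm explicitly; both are short, but yours makes the connection to the dynamics of $\Tb$ more transparent and avoids invoking a second sequence $v$ altogether.
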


\begin{proof} Let $v\in\Wb^0$ be the $-\beta$-expansion of the point $f_{w}(\beta) \in (0, 1]$. We will show that $w=v$. Suppose not,  and let $i$ be the smallest index such that $w_i \neq v_i$.  Then 
$$0 = f_{w}(\beta) - f_{v}(\beta) 
= \frac{1}{(-\beta)^i} \left( (v_i - w_i) + ( f_{w_{[i+1, \infty)}}(\beta) - f_{v_{[i+1, \infty)}}(\beta)) \right).$$
Since $f_{w_{[i+1, \infty)}}(\beta) \in (0, 1]$ by assumption, and $f_{v_{[i + 1, \infty)}}(\beta) \in (0, 1]$ 
by Lemma \ref{itos} and the fact that $v_{[i+1, \infty)} \in \Wb^0$, we have that $| f_{w_{[i+1, \infty)}}(\beta) - f_{v_{[i+1, \infty)}}(\beta)  | < 1$. But $|v_i - w_i| \geq 1$, and so the above equality is impossible.  \end{proof}

It follows from Lemma~\ref{isexpansion} that $$\Wb^0 = \{ w\in\WN : 0<  f_{w_{[k, \infty)}}(\beta) \leq 1\text{ for all } k \geq 1 \}.$$ 
Lemma~\ref{orderinWb0} can be extended to the set $\Wb$ as follows.

\begin{lem} \label{monotone} Let $v, w \in \Wb$. If $f_{v}(\beta) < f_w(\beta)$, then $v \less w$. Equivalently, if $w \lesseq v$, then $f_w(\beta) \leq f_v(\beta)$. 
\end{lem}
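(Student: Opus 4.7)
The plan is to prove the contrapositive form stated in the lemma: if $w \lesseq v$, then $f_w(\beta) \leq f_v(\beta)$. This is just the inverse direction of the implication, so it suffices. If $v=w$ there is nothing to show, so assume $w \less v$ strictly, and let $i$ be the smallest index where $w_i \neq v_i$; by the definition of $\less$ this means $(-1)^i(w_i-v_i) \geq 1$.

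The main computational step is to isolate the position $i$ in the series defining $f_v(\beta)-f_w(\beta)$. Using $\frac{1}{(-\beta)^i}=\frac{(-1)^i}{\beta^i}$ and the fact that $v_j=w_j$ for $j<i$, I would rewrite
\begin{equation*}
f_v(\beta)-f_w(\beta) = \frac{1}{\beta^i}\Bigl((-1)^i(w_i-v_i) + (-1)^i\bigl(f_{v_{[i+1,\infty)}}(\beta) - f_{w_{[i+1,\infty)}}(\beta)\bigr)\Bigr),
\end{equation*}
analogous to the expression manipulated in the proof of Lemma~\ref{orderinWb0} and in the proof of Lemma~\ref{isexpansion}.

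Next I would bound each term. The first term $(-1)^i(w_i-v_i)$ is a nonzero integer of the correct sign, and in particular is at least $1$. For the second term, the hypothesis $v,w\in\Wb$ applied to the shifts $v_{[i+1,\infty)}$ and $w_{[i+1,\infty)}$ (which lie in $[0,1]$ under $f_{\cdot}(\beta)$ by Definition~\ref{def:Wb}) yields
\begin{equation*}
\bigl|f_{v_{[i+1,\infty)}}(\beta) - f_{w_{[i+1,\infty)}}(\beta)\bigr| \leq 1.
\end{equation*}
Combining the two bounds shows that the bracketed quantity is nonnegative, and since $\beta^i>0$ we conclude $f_v(\beta)-f_w(\beta)\geq 0$, as desired.

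The main thing to be careful about is that, in contrast to Lemma~\ref{orderinWb0} on $\Wb^0$, equality $f_w(\beta)=f_v(\beta)$ genuinely can occur on $\Wb$ (for instance when one of the shifts $v_{[i+1,\infty)}$ or $w_{[i+1,\infty)}$ evaluates to $0$ and the other to $1$, which is precisely the phenomenon that places such words outside $\Wb^0$ by Lemma~\ref{isexpansion}). That is why the statement gives only $\leq$ rather than $<$, and the proof does not need to rule out equality — no extra case analysis is required beyond the single inequality above.
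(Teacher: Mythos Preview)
Your proof is correct and essentially identical to the paper's: both isolate the first position $i$ where the words differ, expand $f_v(\beta)-f_w(\beta)$ as $\beta^{-i}$ times the sum of the integer $(-1)^i(w_i-v_i)$ and the tail difference, then bound the tail difference by $1$ using membership in $\Wb$. The only cosmetic difference is that the paper proves the forward implication (assume $f_v(\beta)<f_w(\beta)$, deduce $(-1)^i(v_i-w_i)>0$) while you prove the contrapositive form; the computation and the key bound are the same.
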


\begin{proof} Suppose that $f_{v}(\beta) < f_w(\beta)$. Let $i$ be the smallest index such that $w_i \neq v_i$.  Then 
$$0 < f_w(\beta) - f_v(\beta) = \frac{1}{\beta^i} \left( (-1)^i(v_i - w_i) + (-1)^i( f_{w_{[i+1, \infty)}}(\beta) - f_{v_{[i+1, \infty)}}(\beta)) \right).$$
Since $v,w \in \Wb$, we have $| f_{w_{[i+1, \infty)}}(\beta) - f_{v_{[i+1, \infty)}}(\beta) | \leq 1$.  Therefore, $(-1)^i(v_i - w_i) > 0$ and we conclude that $v \less w$.
\end{proof}

The following lemma will be used to give an equivalent description of $\Wb$ in Lemma~\ref{lem:Wb}. Its proof is along the lines of a similar statement found in \cite{ItoS} for $\Wb^0$. 

\begin{lem} \label{lem:parrystyle} Let $w \in \WN$ be a word such that $w_{[k, \infty)} \lesseq \wmax$ for all $k \geq 1$.
Then $f_{w_{[k, \infty)}}(\beta) \leq f_{{\wmax}}(\beta) = 1$ for all $k \geq 1$.
\end{lem}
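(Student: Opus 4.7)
The plan is to argue by contradiction, showing that if some shift of $w$ has $f$-value exceeding $1$, then the hypothesis forces arbitrarily large $f$-values on further shifts of $w$, contradicting the elementary bound $|f_{w_{[k,\infty)}}(\beta)| \leq N/(\beta-1)$ valid for every word in $\WN$. The main engine will be the standard ``first-difference'' identity obtained by comparing $v := w_{[k,\infty)}$ with $\wmax$ at the first index $i$ where they differ:
$$f_v(\beta) - f_{\wmax}(\beta) = \frac{(-1)^i}{\beta^i}\bigl[\,((\wmax)_i - v_i) + \bigl(f_{v_{[i+1,\infty)}}(\beta) - f_{\wmax_{[i+1,\infty)}}(\beta)\bigr)\bigr],$$
which I would derive by splitting off the $i$-th term in the series defining $f_v(\beta)-f_{\wmax}(\beta)$. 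Before invoking it, I would first confirm that $f_{\wmax}(\beta)=1$: since $\betaone \in \Wb^0 \subseteq \Wb$ and $\betaone \lesseq \wmax$ by maximality, Lemma~\ref{monotone} gives $1 = f_{\betaone}(\beta) \leq f_{\wmax}(\beta)$, while $\wmax \in \Wb$ gives the reverse inequality.

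Now suppose for contradiction that $\delta := f_{w_{[k,\infty)}}(\beta) - 1 > 0$ for some $k$, and let $i\ge 1$ be the first index where $v=w_{[k,\infty)}$ and $\wmax$ disagree. The alternating-lex relation $v \less \wmax$ forces $(-1)^i(v_i - (\wmax)_i) > 0$, so $(\wmax)_i - v_i$ is a nonzero integer of sign $(-1)^{i+1}$. Substituting into the identity and splitting on the parity of $i$, I expect: if $i$ is even, then $f_{v_{[i+1,\infty)}}(\beta) - 1 \geq \beta^i\delta \geq \beta^2\delta$; if $i$ is odd, then $f_{v_{[i+1,\infty)}}(\beta) \leq -\beta^i\delta$, and one application of the recursion $f_{v_{[i+2,\infty)}}(\beta) = v_{i+1} + 1 - \beta f_{v_{[i+1,\infty)}}(\beta)$ upgrades this (using $v_{i+1}\geq 0$) to $f_{v_{[i+2,\infty)}}(\beta) - 1 \geq \beta^{i+1}\delta \geq \beta^2\delta$. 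In either case, a further shift $w_{[k',\infty)}$ satisfies $f_{w_{[k',\infty)}}(\beta) - 1 \geq \beta^2\delta$, and since the hypothesis on $w$ is automatically preserved under shifting, the same reasoning applies again.

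Iterating this step $n$ times produces a shift of $w$ whose $f$-value exceeds $1$ by at least $\beta^{2n}\delta$, which tends to infinity because $\beta > 1$. This contradicts the uniform bound $|f_{w_{[k,\infty)}}(\beta)| \leq N/(\beta-1)$, so $\delta$ cannot be positive. The main obstacle I anticipate is the odd-$i$ case: the first application of the identity there only produces a \emph{negative} $f$-value rather than one above $1$, and one must carefully invoke the single-step recursion to convert this back into an excess above $1$, keeping track that the slack terms $f_{\wmax_{[i+1,\infty)}}(\beta) \in [0,1]$ (which follows from $\wmax \in \Wb$) land on the correct side of each inequality. The remaining work is routine index bookkeeping.
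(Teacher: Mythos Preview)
Your argument is correct, and it is genuinely different from the paper's. The paper proceeds constructively: it proves by induction on the length $r$ of a finite prefix that $w_{[i,i+r]} \lesseq {\wmax}_{[j,j+r]}$ implies $f_{w_{[i,i+r]}}(\beta) \leq f_{{\wmax}_{[j,\infty)}}(\beta) + \beta^{-(r+1)}$ (together with a companion lower bound), and then lets $r\to\infty$ with $j=1$ to conclude. This is a Parry-style two-sided estimate that produces explicit error terms at every finite scale.

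Your route is a contradiction-and-amplification argument: starting from an excess $\delta>0$, the first-difference identity together with the bounds $f_{{\wmax}_{[i+1,\infty)}}(\beta)\in[0,1]$ (from $\wmax\in\Wb$) forces some later shift to have excess at least $\beta^2\delta$, and iteration blows this past the trivial bound $N/(\beta-1)$. The handling of the odd-$i$ case via the one-step recursion $f_{v_{[i+2,\infty)}}(\beta)=v_{i+1}+1-\beta f_{v_{[i+1,\infty)}}(\beta)$ is clean and correct. Your verification that $f_{\wmax}(\beta)=1$ via Lemma~\ref{monotone} and the definition of $\Wb$ is also fine, since that lemma is already available. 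The trade-off: your proof is shorter and more direct for this particular statement, while the paper's inductive framework yields quantitative finite-prefix bounds that could in principle be reused elsewhere.
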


\begin{proof} We extend the definition of $f_w(\beta)$ to finite words $a_1\dots a_m$ by letting \\ $f_{v_1\dots v_m}(\beta) = - \sum_{j = 1}^m \frac{v_j + 1}{(-\beta)^j}$. We also extend $\less$ by defining $v_1\dots v_m \less w_1\dots w_m$ if there exists some $i \leq m$ such that $v_j = w_j$ for all $j < i $ and $(-1)^i(v_i - w_i) > 0$.    

We will show by induction on $r$ that, for every $i, j \geq 1$,
\begin{enumerate}[a)]
\item $w_{[i, i + r]} \gesseq {\wmax}_{[j, j + r]}$ implies $f_{w_{[i, i + r]}}(\beta) \geq f_{{\wmax}_{[j, \infty)}}(\beta) - \frac{1}{\beta^{r+1}}$, and
\item $w_{[i, i + r]} \lesseq {\wmax}_{[j , j + r]}$ implies $f_{w_{[i, i + r]}}(\beta) \leq f_{{\wmax}_{[j, \infty)}}(\beta) + \frac{1}{\beta^{r + 1}}$.  
\end{enumerate}

Consider first the case $r = 0$.  If $w_i \gesseq {\wmax}_j$, then 
\begin{eqnarray*}
f_{w_{i}}(\beta) &=& - \frac{w_i + 1}{-\beta} \geq - \frac{{\wmax}_j+1}{-\beta} 
 =  f_{{\wmax}_{[j, \infty)}}(\beta) - \frac{1}{\beta} f_{{\wmax}_{[j + 1, \infty)}}(\beta) 
\geq f_{{\wmax}_{[j, \infty)}}(\beta) - \frac{1}{\beta}.
\end{eqnarray*}
Similarly, if $w_i \lesseq {\wmax}_j$, then 
\begin{eqnarray*} 
f_{w_i}(\beta) &=& - \frac{w_i+1}{-\beta} \leq - \frac{{\wmax}_{j}+1}{-\beta} 
= f_{{\wmax}_{[j, \infty)}}(\beta) - \frac{1}{\beta} f_{{\wmax}_{[j + 1, \infty)}}(\beta) \leq f_{{\wmax}_{[j, \infty)}}(\beta) + \frac{1}{\beta}. \end{eqnarray*}
Therefore, both a) and b) hold when $r = 0$.

Now fix $k\ge1$, and assume that a) and b) hold for all $i, j \geq 1$ whenever $r < k$.  
To prove statement b) for $r=k$, suppose that $w_{[i, i + k]} \lesseq {\wmax}_{[j, j + k]}$. Then either $w_i = w_j$ and $w_{[i + 1, i + k]} \gesseq {\wmax}_{[j + 1, j + k]}$, or else $w_i < {\wmax}_j$.  In the first case,  we have
$$
 f_{{\wmax}_{[j, \infty)}}(\beta)-  f_{w_{[i, i + k]}}(\beta) = \frac{1}{-\beta}(f_{{\wmax}_{[j + 1, \infty)}}(\beta)- f_{w_{[i + 1, i + k]}}(\beta)) \geq \frac{1}{-\beta}\cdot\frac{1}{\beta^k} = - \frac{1}{\beta^{k+1}}, 
$$
where, to obtain the inequality, we applied the induction hypothesis for statement a).  On the other hand, if $w_i < {\wmax}_j$, we have
\begin{multline*} 
 f_{{\wmax}_{[j, \infty)}}(\beta) -f_{w_{[i, i + k]}}(\beta)  =  - \frac{ {\wmax}_j - w_i}{-\beta} + \frac{1}{-\beta}( f_{{\wmax}_{[j + 1, \infty)}}(\beta) -f_{w_{[i + 1, i + k]}}(\beta)  ) \\
\geq \frac{1}{\beta}(1 + (f_{w_{[i + 1, i + k]}}(\beta) -   f_{{\wmax}_{[j + 1, \infty)}}(\beta))
 \geq \frac{1}{\beta}\left(1 + \left(- \frac{1}{\beta^k} - 1\right)\right) =  - \frac{1}{\beta^{k + 1}},
\end{multline*}
where in the last inequality we used that $f_{w_{[i + 1, i + k]}}(\beta) \geq -\frac{1}{\beta^k}$. To see why this holds, note that $w_{[i+2, i + k]} \lesseq {\wmax}_{[1, k-1]}$ by assumption, and $f_{w_{[i + 2, i + k]}}(\beta) \leq f_{\wmax}(\beta) + \frac{1}{\beta^{k-1}}$ by the induction hypothesis, so that
$$f_{w_{[i + 1, i + k]}}(\beta) = \frac{w_{i + 1} + 1}{\beta} - \frac{1}{\beta}f_{w_{[i + 2, i + k]}}(\beta) \geq \frac{1}{\beta} - \frac{1}{\beta} \left( f_{\wmax}(\beta) + \frac{1}{\beta^{k-1}} \right) = -\frac{1}{\beta^{k}}.$$
The proof of statement a) for $r=k$ follows similarly, and thus this completes the inductive proof of a) and b).

Taking the limit of b) as $r \rightarrow \infty$, we find 
$$w_{[i, \infty)}  \lesseq {\wmax}_{[j, \infty)}  \text{ implies }f_{w_{[i, \infty)}}(\beta) \leq  f_{{\wmax}_{[j, \infty)}}(\beta)  .$$
In particular, setting $i=k$ and $j=1$, the assumption that $w_{[k, \infty)} \lesseq \wmax$ implies that $f_{w_{[k, \infty)}}(\beta) \leq f_{\wmax}(\beta) = 1$.  
\end{proof}

Next we give an equivalent description of $\Wb$.

\begin{lem}\label{lem:Wb} $\wmax$ is the largest $-\beta$-representation of $1$ with respect to $\less$, and
\begin{equation}\label{eq:Wb2}\Wb = \{ w\in\WN : w_{[k, \infty)} \lesseq \wmax \text{ for all } k \geq 1 \}.
\end{equation}
\end{lem}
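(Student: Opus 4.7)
The plan is to handle the two assertions of the lemma separately.

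First, I would establish that $\wmax$ is a $-\beta$-representation of $1$, which amounts to showing $f_\wmax(\beta) = 1$. The inequality $f_\wmax(\beta) \leq 1$ is immediate from $\wmax \in \Wb$. For the reverse inequality, the $-\beta$-expansion $\betaone$ of $1$ lies in $\Wb^0 \subseteq \Wb$ with $f_\betaone(\beta) = 1$; since $\wmax$ is the $\less$-maximum of $\Wb$ by definition, we have $\betaone \lesseq \wmax$, and Lemma~\ref{monotone} (applicable because both words are in $\Wb$) then gives $1 = f_\betaone(\beta) \leq f_\wmax(\beta)$. Once $\wmax$ is known to be a $-\beta$-representation of $1$, maximality among all such representations is automatic: any $-\beta$-representation of $1$ belongs to $\Wb$ by definition, and is therefore $\lesseq \wmax$.

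For the forward inclusion in \eqref{eq:Wb2}, if $w \in \Wb$ then the defining condition is visibly shift-invariant, so each $w_{[k,\infty)}$ is in $\Wb$ and thus $\lesseq \wmax$ by maximality. For the reverse inclusion, suppose $w_{[k,\infty)} \lesseq \wmax$ for every $k \geq 1$. Lemma~\ref{lem:parrystyle}, together with the just-established fact $f_\wmax(\beta) = 1$, gives the upper bound $f_{w_{[k,\infty)}}(\beta) \leq 1$ for every $k$. For the matching lower bound, I would split off the first term of the defining series to obtain the recursion
$$f_{w_{[k,\infty)}}(\beta) = \frac{w_k + 1 - f_{w_{[k+1,\infty)}}(\beta)}{\beta},$$
and then combine $f_{w_{[k+1,\infty)}}(\beta) \leq 1$ with $w_k \geq 0$ to conclude $f_{w_{[k,\infty)}}(\beta) \geq w_k/\beta \geq 0$. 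Hence $w \in \Wb$.

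The only step that isn't routine bookkeeping is the lower bound in the reverse inclusion: Lemma~\ref{lem:parrystyle} hands us only the upper bound, and recovering $f_{w_{[k,\infty)}}(\beta) \geq 0$ requires the short observation via the recursion above. I expect this to be the sole mild obstacle in the proof.
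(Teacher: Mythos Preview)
Your proposal is correct and follows essentially the same approach as the paper's proof: both establish $f_{\wmax}(\beta)=1$ by comparing with $\betaone$ via Lemma~\ref{monotone}, obtain the forward inclusion from shift-invariance of $\Wb$, get the upper bound in the reverse inclusion from Lemma~\ref{lem:parrystyle}, and derive the lower bound from the one-step recursion for $f$. The only cosmetic difference is that the paper phrases the lower-bound step as a contradiction (if $f_{w_{[k,\infty)}}(\beta)<0$ then $f_{w_{[k+1,\infty)}}(\beta)>1$), whereas you argue it directly; the underlying computation is identical.
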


\begin{proof} Since $\betaone\in\Wb$ and $\wmax$ is the largest word in $\Wb$ by definition, we have that $\betaone\lesseq\wmax$.
By Lemma \ref{monotone}, it follows that $1 = f_{\betaone}(\beta) \leq f_{\wmax}(\beta)\le 1$, and so $f_{\wmax}(\beta) = 1$. Thus, $\wmax$ is a $-\beta$-representation of 1, hence the largest. 

Next let us prove Equation~\eqref{eq:Wb2}. Let $w\in\Wb$. By Definition~\ref{def:Wb}, $w_{[k, \infty)} \lesseq \wmax$ for all $k \geq 1$, since $\wmax$ is the largest word in $\Wb$. This proves the forward inclusion. 
Conversely, let $w \in \WN$ be such that $w_{[k, \infty)} \lesseq \wmax$ for all $k \geq 1$.  
By Lemma \ref{lem:parrystyle}, $f_{w_{[k, \infty)}}(\beta) \leq 1$ for all $k \geq 1$.  To show $f_{w_{[k, \infty)}}(\beta)\ge 0$, suppose for contradiction that $f_{w_{[k, \infty)}}(\beta) =- \frac{w_k + 1}{-\beta}  + \frac{1}{-\beta}f_{w_{[k + 1, \infty)}}(\beta)< 0$ for some $k \geq 1$. Since $w_k\ge0$, this would imply that $f_{w_{[k +1, \infty)}}(\beta) > 1$, a contradiction.  Thus, $f_{w_{[k, \infty)}}(\beta) \in [0, 1]$ for all $k \geq 1$, and so $w\in\Wb$. 
\end{proof}

A consequence of Lemma~\ref{lem:Wb} is that 
\begin{equation}\label{eq:wmaxsubwords}\wmax\gesseq{\wmax}_{[k,\infty)} \text{ for all }k,\end{equation} that is, $\wmax$ is greater than or equal to all of its shifts.

Since $\Wb$ is closed under shifts, and $\wmin$ is the smallest word in $\Wb$ by definition, an equivalent description of $\Wb$ is 
$$\Wb = \{ w\in\WN : \wmin \lesseq w_{[k, \infty)} \lesseq \wmax \text{ for all } k \geq 1 \}.$$
Note also that since $\wmin = 0 \wmax$ and $f_{\wmax}(\beta)=1$, we have $f_{\wmin}(\beta) = 0$ by definition of $f$. Hence, $\wmin$ is the smallest $-\beta$-representation of $0$ with respect to~$\less$.

If $\betaone$ is not eventually periodic, then $\betaone$ is the unique $-\beta$-representation of $1$, and $\betaone=\wmax$. If $\betaone = (a_1 a_2 \dots a_{2r+1})^\infty$ is periodic of odd length $2r+1$, another $-\beta$-representation of $1$ is $(a_1 a_2 \dots (a_{2r+1} -1)0)^\infty$.   In this case, $\betaone=\wmax$ is the largest $-\beta$-representation of $1$.  If $\betaone = (a_1 a_2 \dots a_{2r})^\infty$ is periodic of even length $2r$, then $\wmax = (a_1 a_2 \dots (a_{2r} - 1)0)^\infty\gess\betaone$.  Similar observations were first made in \cite{Steiner}. We will use the following result of Steiner. 

\begin{thm}[\cite{Steiner}] \label{Steiner} Let $\beta, \beta' > 1$. Similarly to the definition of $\betaone$, let $\mathfrak{a}_{\beta'}$ be the $-\beta'$-expansion of 1.  Then $\beta < \beta'$ if and only if $\betaone \less \mathfrak{a}_{\beta'}$.
 \end{thm}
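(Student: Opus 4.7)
The plan is to reduce the biconditional to the forward direction $\beta < \beta' \Rightarrow \betaone \less \mathfrak{a}_{\beta'}$: swapping the roles of $\beta$ and $\beta'$ handles the case $\beta > \beta'$, and $\beta = \beta'$ trivially gives $\betaone = \mathfrak{a}_{\beta'}$. As a byproduct, this reduction will also show that the map $\beta \mapsto \betaone$ is injective.

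For the forward direction, I would compare $\betaone = a_1 a_2 \dots$ and $\mathfrak{a}_{\beta'} = a'_1 a'_2 \dots$ digit by digit using the dynamical definition $a_j = \lfloor \beta \Tb^{j-1}(1) \rfloor$. Since $a_1 = \lfloor \beta \rfloor \leq \lfloor \beta' \rfloor = a'_1$, if $a_1 < a'_1$ then already $(-1)^1(a_1 - a'_1) > 0$ yields $\betaone \less \mathfrak{a}_{\beta'}$. Otherwise I would track the two orbits by setting $t_j = \Tb^{j-1}(1)$ and $t'_j = T_{-\beta'}^{j-1}(1)$. Under the assumption $a_i = a'_i$ for $i < k$, the recurrence $t_{j+1} = 1 + a_j - \beta t_j$ and its primed analogue, unrolled from $\beta t_1 - \beta' t'_1 = \beta - \beta'$, lead to the closed form
\[
(-1)^k(\beta t_k - \beta' t'_k) \;=\; (\beta' - \beta) \sum_{i=1}^{k} (-1)^{i+1} \beta^{k-i} t'_i.
\]

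The key invariant to establish is that the right-hand side above is strictly positive whenever the first $k-1$ digits of the two expansions agree. Once this is proved, at the first index $k$ with $a_k \neq a'_k$ the relations $a_k = \lfloor \beta t_k \rfloor$ and $a'_k = \lfloor \beta' t'_k \rfloor$ force $(-1)^k(a_k - a'_k) > 0$, which by definition of $\less$ yields $\betaone \less \mathfrak{a}_{\beta'}$. If the two expansions never differ, then $\betaone = \mathfrak{a}_{\beta'}$, which must be excluded; this can be done by a uniqueness argument on the equation $f_w(x) = 1$, leveraging the order-isomorphism of Lemma~\ref{itos} together with the strict positivity above.

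The main obstacle is the positivity of $S_k := \sum_{i=1}^k (-1)^{i+1} \beta^{k-i} t'_i$, which satisfies $S_1 = 1$ and the recurrence $S_{k+1} = \beta S_k + (-1)^k t'_{k+1}$. The transition from $k$ odd to $k+1$ subtracts $t'_{k+1}$ and requires the non-trivial lower bound $\beta S_k > t'_{k+1}$; the naive estimate $t'_{k+1} \leq 1$ is insufficient when $\beta$ is close to $1$. The argument must therefore exploit the admissibility of $\mathfrak{a}_{\beta'}$ from Theorem~\ref{thm:Wb}: the constraints on the shifts ${\mathfrak{a}_{\beta'}}_{[j,\infty)}$ translate, via the order-preserving correspondence $w \mapsto f_w(\beta')$ of Lemma~\ref{orderinWb0}, into inequalities on the iterates $t'_i$ precisely strong enough to close the induction.
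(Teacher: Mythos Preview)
The paper does not give its own proof of this theorem: it is quoted verbatim from Steiner \cite{Steiner} and used as a black box. So there is no in-paper argument to compare your proposal against.

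Evaluating your proposal on its own merits: the reduction to the forward implication is fine, and your closed form
\[
(-1)^k(\beta t_k - \beta' t'_k) = (\beta'-\beta)\,S_k, \qquad S_k=\sum_{i=1}^{k}(-1)^{i+1}\beta^{k-i}t'_i,
\]
is correct (it follows by unrolling $E_j:=t_j-t'_j$ with $E_{j+1}=-\beta E_j+(\beta'-\beta)t'_j$). The deduction that $(-1)^k(a_k-a'_k)>0$ at the first differing index, given $S_k>0$, is also correct. However, the argument has a genuine gap exactly where you flag it: you assert that the admissibility constraints on $\mathfrak{a}_{\beta'}$ from Theorem~\ref{thm:Wb} are ``precisely strong enough to close the induction'' for $S_k>0$, but you do not carry this out. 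The difficulty is real: the iterates $t'_i$ live under $T_{-\beta'}$, while $S_k$ is a polynomial in $\beta$, so the inequalities $t'_i\le 1$ coming from admissibility do not by themselves yield $\beta S_k>t'_{k+1}$ for odd $k$ when $\beta$ is near~$1$ (already at $k=3$ the crude bound $\beta S_3\ge \beta^2(\beta-1)$ fails to dominate $t'_4\le 1$). A sharper, structure-aware estimate is needed, and you have not supplied one.

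There is also a circularity hazard in your plan to exclude the case $\betaone=\mathfrak{a}_{\beta'}$. Appealing to uniqueness of the solution of $f_w(x)=1$ via results later in this paper would be circular, since Lemma~\ref{lem:unique_largest} rests on Lemma~\ref{alphabetsubset}, which in turn invokes the present theorem. If you do succeed in proving $S_k>0$ for all $k$, the exclusion follows independently (the identity forces $(\beta'-\beta)S_k$ to stay bounded while $S_k$ grows), but again this hinges on the unproven positivity.
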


Let $u=100111001001001110011\dots$ be the sequence obtained by starting with the word $1$ and repeatedly applying the morphism $1\mapsto100$, $0\mapsto1$.  It is shown in \cite{LS}  that the word $u$  is the limit of the words $\betaone$ as $\beta$ approaches $1$ from the right, and that
\begin{equation}\label{eq:u-ab} u\less \betaone \text{ for all }\beta > 1.
\end{equation}

\begin{thm}[\cite{Steiner}] \label{SteinerSoln} Let $w\in\WN$ be such that $w \gesseq w_{[k, \infty)} $ for all $k \geq 1$ and $w \gess u $.  Then there exists a unique $\beta > 1$ such that $w$ is a $-\beta$-representation of $1$.
\end{thm}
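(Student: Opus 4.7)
The plan is to treat $F(\beta) := f_w(\beta)$ as a continuous function of $\beta \in (1,\infty)$, apply the intermediate value theorem to produce a candidate $\beta^\star$ with $F(\beta^\star) = 1$, and then invoke Lemma \ref{lem:parrystyle} to verify that $w$ is a genuine $-\beta^\star$-representation of $1$. First I would observe that the series defining $f_w(\beta)$ converges uniformly on compact subsets of $(1,\infty)$, since it is dominated by a geometric series, so $F$ is continuous there. As $\beta \to \infty$, $F(\beta) \to 0$, and in particular $F(\beta) < 1$ for all $\beta$ sufficiently large.

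The delicate half is exhibiting some $\beta_0 > 1$ with $F(\beta_0) \geq 1$; this is where the hypothesis $w \gess u$ enters. Since $\mathfrak{a}_{\beta} \to u$ coordinate-wise as $\beta \to 1^+$, the strict inequality $u \less w$ propagates, so for every $\beta_0$ sufficiently close to $1$ one has $\mathfrak{a}_{\beta_0} \less w$. Because $\mathfrak{a}_{\beta_0}$ is a $-\beta_0$-representation of $1$, $f_{\mathfrak{a}_{\beta_0}}(\beta_0) = 1$, and I would want to deduce $f_w(\beta_0) \geq 1$ by monotonicity. Lemma \ref{monotone} gives this provided $w \in \mathcal{W}_{-\beta_0}$; otherwise I would bootstrap: if some shift $w_{[k,\infty)}$ fails the bound $w_{[k,\infty)} \lesseq \Omega_{\beta_0}$, the hypothesis $w \gesseq w_{[k,\infty)}$ forces $w \gess \Omega_{\beta_0}$, and a variant of the estimate inside Lemma \ref{lem:parrystyle} then yields $f_w(\beta_0) > 1$. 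In either case $F(\beta_0) \geq 1$, so by the IVT some $\beta^\star \in (1,\infty)$ satisfies $F(\beta^\star) = 1$.

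To complete existence I would verify $f_{w_{[k,\infty)}}(\beta^\star) \in [0,1]$ for all $k \geq 1$. The upper bound follows from Lemma \ref{lem:parrystyle} applied with $w$ playing the role of $\wmax$, since the hypothesis of that lemma requires only $w_{[k,\infty)} \lesseq w$ for all $k$ (given) and $f_w(\beta^\star) = 1$. The lower bound follows by the same reasoning as at the end of the proof of Lemma \ref{lem:Wb}: if $f_{w_{[k,\infty)}}(\beta^\star) < 0$ for some $k$, then $f_{w_{[k-1,\infty)}}(\beta^\star) > 1$, contradicting the upper bound just established.

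For uniqueness, suppose $1 < \beta_1 < \beta_2$ both satisfy the conclusion. Then $w$ equals $\Omega_{\beta_1}$ and also $\Omega_{\beta_2}$, since in each case $w$ is the largest word in $\mathcal{W}_{-\beta_i}$ with $f$-value $1$. From $\Omega_{\beta_i}$ one can recover $\mathfrak{a}_{\beta_i}$ using the case split recalled after Theorem \ref{Steiner} (either $\Omega_{\beta_i} = \mathfrak{a}_{\beta_i}$, or $\Omega_{\beta_i}$ has an even-period tail of the form $(a_1 \dots (a_{2r}-1)0)^\infty$ from which $\mathfrak{a}_{\beta_i}$ is read off), and either branch forces $\mathfrak{a}_{\beta_1} = \mathfrak{a}_{\beta_2}$, contradicting the strict order $\mathfrak{a}_{\beta_1} \less \mathfrak{a}_{\beta_2}$ guaranteed by Theorem \ref{Steiner}. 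The main obstacle I anticipate is the bootstrapping step for $F(\beta_0) \geq 1$, because Lemma \ref{monotone} is only immediately applicable once one knows $w \in \mathcal{W}_{-\beta_0}$, which is itself precisely what the argument is driving toward.
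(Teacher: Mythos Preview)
The paper does not prove this theorem; it is quoted from Steiner~\cite{Steiner} and used as a black box throughout Section~\ref{sec:beta}. There is therefore no in-paper argument to compare your proposal against.

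Your intermediate-value-theorem outline for existence is plausible, and you correctly identify the bootstrapping step for $F(\beta_0)\ge 1$ as the delicate point. One remark there: the inductive estimates inside the proof of Lemma~\ref{lem:parrystyle} genuinely use the global hypothesis $w_{[k,\infty)}\lesseq\wmax$ (it is invoked when bounding $f_{w_{[i+2,i+k]}}(\beta)$ from below in the induction for statement~b)), so your ``variant of the estimate'' with $w$ in the role of $\wmax$ requires rerunning that induction under the hypothesis $w\gesseq w_{[k,\infty)}$ for all $k$. This is feasible, but it is real work, not a one-line invocation.

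Your uniqueness argument has a gap. From the assumption that $w$ is a $-\beta_i$-representation of $1$ you conclude $w=\Omega_{\beta_i}$, arguing that $w$ is ``the largest word in $\mathcal{W}_{-\beta_i}$ with $f$-value $1$''. But membership in $\mathcal{W}_{-\beta_i}$ only gives $w\lesseq\Omega_{\beta_i}$, and Lemma~\ref{monotone} (which is one-directional) does not rule out $w\less\Omega_{\beta_i}$ with $f_w(\beta_i)=f_{\Omega_{\beta_i}}(\beta_i)=1$; indeed the paper explicitly notes, in the paragraph preceding Theorem~\ref{Steiner}, that $1$ can admit several distinct $-\beta$-representations. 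You also cannot fall back on Lemma~\ref{alphabetsubset} or Lemma~\ref{lem:unique_largest} here, since the paper proves both of those \emph{using} Theorem~\ref{SteinerSoln}, so invoking them would be circular. A self-contained uniqueness argument needs a different idea (and is presumably what Steiner's original paper supplies).
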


\begin{lem} \label{alphabetsubset} If $1 < \beta < \beta'$, then $\Wb \subsetneq \mathcal{W}_{-\beta'}$.  \end{lem}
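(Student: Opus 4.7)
The plan is to reduce the claim to establishing the single strict inequality $\wmax \less \Omega_{\beta'}$, from which both the inclusion and its strictness follow from Lemma~\ref{lem:Wb}.

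First I would show $\wmax \lesseq \Omega_{\beta'}$. By Theorem~\ref{Steiner}, $\betaone \less \mathfrak{a}_{\beta'}$. When $\betaone$ is not periodic of even length, we have $\wmax = \betaone$, so the chain
\[\wmax \;=\; \betaone \;\less\; \mathfrak{a}_{\beta'} \;\lesseq\; \Omega_{\beta'}\]
finishes the argument, the last step using that $\Omega_{\beta'}$ is the largest $-\beta'$-representation of $1$ while $\mathfrak{a}_{\beta'}$ is one such representation. When $\betaone = (a_1 \cdots a_{2r})^\infty$ is periodic of even length, so that $\wmax = (a_1 \cdots (a_{2r}-1)0)^\infty$, the same chain still works whenever $\mathfrak{a}_{\beta'}$ first differs from $\betaone$ at a position $i < 2r$, or at $i = 2r$ with $(\mathfrak{a}_{\beta'})_{2r} < a_{2r} - 1$, since then $\wmax$ and $\mathfrak{a}_{\beta'}$ first disagree at the same index with the same sign. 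In the remaining subcases, where $\mathfrak{a}_{\beta'}$ shares the prefix $a_1 \cdots a_{2r-1}(a_{2r}-1)$ or $a_1 \cdots a_{2r}$ with $\wmax$ or $\betaone$ respectively, I would compare $\wmax$ and $\mathfrak{a}_{\beta'}$ position-by-position past the $(2r)$-th symbol and invoke the admissibility condition $(\mathfrak{a}_{\beta'})_{[k,\infty)} \lesseq \mathfrak{a}_{\beta'}$ from Theorem~\ref{thm:Wb} to force the sign at the first differing position to fall in the right direction.

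Strictness $\wmax \less \Omega_{\beta'}$ (rather than just $\lesseq$) then follows from Theorem~\ref{SteinerSoln}: the sequence $\wmax$ satisfies $\wmax \gesseq {\wmax}_{[k,\infty)}$ for all $k$ by~\eqref{eq:wmaxsubwords} and $\wmax \gess u$ by~\eqref{eq:u-ab}, so it is a $-\beta''$-representation of $1$ for a unique $\beta''$; as this $\beta''$ equals $\beta \neq \beta'$, the sequences $\wmax$ and $\Omega_{\beta'}$ cannot coincide.

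For $\Wb \subseteq \mathcal{W}_{-\beta'}$, given $w \in \Wb$, Lemma~\ref{lem:Wb} yields $w_{[k,\infty)} \lesseq \wmax$ for all $k \ge 1$, so $w_{[k,\infty)} \lesseq \wmax \less \Omega_{\beta'}$, placing $w$ in $\mathcal{W}_{-\beta'}$ by another application of Lemma~\ref{lem:Wb} at $\beta'$. For strict containment I would exhibit $\Omega_{\beta'}$ itself as an element of $\mathcal{W}_{-\beta'} \setminus \Wb$: it lies in $\mathcal{W}_{-\beta'}$ trivially, and fails to lie in $\Wb$ because $\Omega_{\beta'} \gess \wmax$ violates the bound $w \lesseq \wmax$ required by Lemma~\ref{lem:Wb} (with $k=1$). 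The main obstacle is the even-periodic subcase where $\mathfrak{a}_{\beta'}$ shares a long initial segment with $\betaone$: the direct comparison $\wmax \less \mathfrak{a}_{\beta'}$ can fail there, and one has to carefully exploit the admissibility of $\mathfrak{a}_{\beta'}$ together with the uniqueness provided by Theorem~\ref{SteinerSoln} to conclude.
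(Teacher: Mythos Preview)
Your overall reduction is right and matches the paper's endgame: once $\wmax \less \mathfrak{a}_{\beta'}$ (hence $\wmax \less \Omega_{\beta'}$) is known, Lemma~\ref{lem:Wb} gives the inclusion, and exhibiting $\mathfrak{a}_{\beta'}$ (or $\Omega_{\beta'}$) gives strictness. Your strictness argument via Theorem~\ref{SteinerSoln} is also correct.

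Where you diverge from the paper is in how you get $\wmax \less \mathfrak{a}_{\beta'}$. You split on whether $\betaone$ is periodic of even length and, in the hard subcase (where $\mathfrak{a}_{\beta'}$ shares the prefix $a_1\cdots a_{2r-1}(a_{2r}-1)$ with $\wmax$), you only sketch an appeal to admissibility of $\mathfrak{a}_{\beta'}$. That sketch is not obviously complete: after peeling off the common prefix of odd length $2r{+}1$, you need $\wmax \gesseq (\mathfrak{a}_{\beta'})_{[2r+2,\infty)}$, while admissibility only gives $(\mathfrak{a}_{\beta'})_{[2r+2,\infty)} \lesseq \mathfrak{a}_{\beta'}$; you are then comparing against the very quantity you are trying to bound, and the recursion can repeat indefinitely without a direct resolution.

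The paper sidesteps all of this. It first proves $\mathfrak{a}_{\beta'}\notin\Wb$: if it were in $\Wb$, then $f_{\mathfrak{a}_{\beta'}}(\beta)\in[0,1]$ for all shifts, and from $\betaone\less\mathfrak{a}_{\beta'}$ together with Lemma~\ref{monotone} one gets $f_{\mathfrak{a}_{\beta'}}(\beta)=1$, making $\mathfrak{a}_{\beta'}$ a $-\beta$-representation of $1$ as well as a $-\beta'$-representation of $1$, contradicting the uniqueness in Theorem~\ref{SteinerSoln}. Then, since $\mathfrak{a}_{\beta'}\notin\Wb$, Lemma~\ref{lem:Wb} yields some $k$ with $(\mathfrak{a}_{\beta'})_{[k,\infty)}\gess\wmax$, and shift-maximality of $\mathfrak{a}_{\beta'}$ gives $\wmax\less\mathfrak{a}_{\beta'}$ directly, with no case analysis on the periodicity of $\betaone$. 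In other words, the uniqueness theorem you invoke only at the end is exactly what closes your hard subcase; using it at the start, as the paper does, gives a uniform two-line argument and makes the periodic/aperiodic split unnecessary.
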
 

\begin{proof}
By Theorem \ref{Steiner} and Equation \eqref{eq:u-ab}, $u\less \betaone \less \mathfrak{a}_{\beta'}$.  Let us first show that $\mathfrak{a}_{\beta'} \notin \Wb$. 
If we were to have $\mathfrak{a}_{\beta'} \in \Wb$, then Lemma \ref{monotone} would imply that $f_{\mathfrak{a}_{\beta'}}(\beta) = 1$. Since $\mathfrak{a}_{\beta'}$ is a $-\beta'$-expansion of $1$, we have ${\mathfrak{a}_{\beta'}}_{[k, \infty)} \lesseq \mathfrak{a}_{\beta'}$ for all $k \geq 1$.  If follows from Lemma \ref{orderinWb0} that $f_{{\mathfrak{a}_{\beta'}}_{[k, \infty)}}(\beta) \in [0, 1]$ for all $k \geq 1$.  Hence, $\mathfrak{a}_{\beta'}$ is both a $-\beta$-representation of $1$ and a $-\beta'$-representation of $1$, contradicting Theorem \ref{SteinerSoln}.

The fact that $\mathfrak{a}_{\beta'} \notin \Wb$ implies, by Lemma~\ref{lem:Wb}, that there is a $k \geq 1$ such that ${\mathfrak{a}_{\beta'}}_{[k, \infty)} \gess \wmax$.  Since $\mathfrak{a}_{\beta'}$ is a $-\beta'$-expansion of $1$, $\mathfrak{a}_{\beta'} \gesseq {\mathfrak{a}_{\beta'}}_{[k, \infty)}$ for all $k \geq 1$.  We conclude that $\wmax \less \mathfrak{a}_{\beta'}$. It follows that, if $v\in\Wb$, then  $v_{[k, \infty)} \less \mathfrak{a}_{\beta'}$ for all $k \geq 1$, and we conclude that $v \in \mathcal{W}_{-\beta'}$.  Moreover, containment is strict because $\mathfrak{a}_{\beta'} \notin \Wb$.  
\end{proof}

\begin{lem} \label{lem:unique_largest}
In the situation of Theorem~\ref{SteinerSoln}, the unique $\beta>1$ is also the largest real solution of $f_w(x)=1$.
\end{lem}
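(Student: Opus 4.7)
The plan is to argue by contradiction using the uniqueness statement inside Theorem~\ref{SteinerSoln}. Suppose for contradiction that there is a real number $\beta' > \beta$ with $f_w(\beta')=1$. Since $\beta > 1$, we have $\beta' > 1$ as well, so the series defining $f_w(\beta')$ converges absolutely. The goal is to show that $w$ is itself a $-\beta'$-representation of $1$, at which point Theorem~\ref{SteinerSoln} will force $\beta = \beta'$, a contradiction.

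By the conclusion of Theorem~\ref{SteinerSoln}, $w$ is a $-\beta$-representation of $1$, so $w \in \Wb$. I would then invoke Lemma~\ref{alphabetsubset} with $1 < \beta < \beta'$ to obtain the strict inclusion $\Wb \subsetneq \mathcal{W}_{-\beta'}$, which in particular places $w$ in $\mathcal{W}_{-\beta'}$. By Definition~\ref{def:Wb}, read with $\beta'$ in place of $\beta$, this is exactly the statement $f_{w_{[k,\infty)}}(\beta') \in [0,1]$ for every $k \geq 1$, and combined with the assumption $f_w(\beta')=1$ it says precisely that $w$ is a $-\beta'$-representation of $1$. Now the order-theoretic hypotheses on $w$ in Theorem~\ref{SteinerSoln}, namely $w \gesseq w_{[k,\infty)}$ for every $k$ and $w \gess u$, depend only on the alternating lexicographic order on the word and are independent of the base, so Theorem~\ref{SteinerSoln} applies verbatim at $\beta'$ and its uniqueness clause forces $\beta = \beta'$, contradicting $\beta' > \beta$. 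Real solutions $x \leq \beta$ are automatically no larger than $\beta$, and for $|x| \leq 1$ the defining series of $f_w$ need not converge, so this establishes the claim.

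The only delicate point is recognizing that Lemma~\ref{alphabetsubset} delivers $w \in \mathcal{W}_{-\beta'}$ with no extra verification required: the hypothesis $w \gesseq w_{[k,\infty)}$ need not be re-examined at $\beta'$, since the shift-dominance is purely combinatorial. Once $w$ is known to lie in $\mathcal{W}_{-\beta'}$, the condition $f_w(\beta')=1$ upgrades $w$ to a $-\beta'$-representation of $1$ by definition, and the uniqueness proved by Steiner finishes the job. Consequently there is no substantive obstacle here; the argument is essentially a short bookkeeping proof on top of Lemma~\ref{alphabetsubset} and Theorem~\ref{SteinerSoln}.
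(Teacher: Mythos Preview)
Your proof is correct and follows essentially the same approach as the paper: assume a larger solution $\beta'>\beta$, use Lemma~\ref{alphabetsubset} to place $w\in\mathcal{W}_{-\beta'}$, conclude that $w$ is a $-\beta'$-representation of $1$, and contradict the uniqueness in Theorem~\ref{SteinerSoln}. The paper's version is simply a terser rendition of the same chain of implications.
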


\begin{proof}
Suppose for contradiction that there exists $\gamma>\beta$ such that $f_{w}(\gamma) = 1$. By Lemma \ref{alphabetsubset}, $w \in \Wb \subseteq \mathcal{W}_{-\gamma}$,
and so $f_{w_{[k,\infty)}}(\gamma)\in[0,1]$ for all $k\ge1$. Since $f_{w}(\gamma) = 1$, the word $w$ is a $-\gamma$-representation of 1, contradicting the uniqueness in Theorem~\ref{SteinerSoln}.
\end{proof}

\begin{defn} \label{def: betaw} For a given word $w \in \WN$, let 
$$\bar\beta(w) = \inf \{ \beta>1 : w \in \Wb \}.$$
\end{defn}

\begin{defn} \label{barb} Let $w \in \WN$. If there is an index $l$ such that $w_{ [k, \infty)} \lesseq w_{[l, \infty)}$ for all $k \geq 1$ and $w_{[l, \infty)} \gess u$, let $\b(w)$ be the largest real solution to $f_{w_{[l, \infty)}}(x) = 1$ (equivalently, by Lemma~\ref{lem:unique_largest},
$\b(w)$ is the unique $\beta>1$ such that such that $w_{[l, \infty)}$ is a $-\beta$-representation of $1$).  If $w_{[k, \infty)} \lesseq u$ for all $k \geq 1$, define $\b(w) = 1$.
 \end{defn}

\begin{lem}\label{betcor} If there is an index $l$ such that $w_{[k, \infty)} \lesseq w_{[l, \infty)}$ for all $k \geq 1$, then 
$$\bar\beta(w) = \b(w).$$
Additionally, if $\bar \beta(w) > 1$, then $w\in\mathcal{W}_{- \bar\beta(w)}$; if $\bar \beta(w) = 1$, then $w \in \Wb$ for all $\beta > 1$.
\end{lem}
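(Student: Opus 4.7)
The plan is to split on the dichotomy in Definition~\ref{barb}. If $w_{[k,\infty)} \lesseq u$ for all $k \geq 1$, then $\b(w)=1$, and using~\eqref{eq:u-ab} together with $\betaone \in \Wb$ I obtain $w_{[k,\infty)} \lesseq u \less \betaone \lesseq \wmax$ for every $\beta>1$, so Lemma~\ref{lem:Wb} places $w$ in $\Wb$ for all $\beta>1$; this yields both $\bar\beta(w)=1=\b(w)$ and the last clause of the lemma. In the remaining case $w_{[l,\infty)} \gess u$, Theorem~\ref{SteinerSoln} combined with Lemma~\ref{lem:unique_largest} characterises $\b(w)$ as the unique $\beta>1$ making $w_{[l,\infty)}$ a $-\beta$-representation of $1$. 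For the upper bound $\bar\beta(w) \le \b(w)$, this puts $w_{[l,\infty)} \in \mathcal{W}_{-\b(w)}$, so $w_{[l,\infty)} \lesseq \Omega_{\b(w)}$ by Lemma~\ref{lem:Wb}; combined with the hypothesis $w_{[k,\infty)} \lesseq w_{[l,\infty)}$ for all $k$, this forces $w \in \mathcal{W}_{-\b(w)}$, and Lemma~\ref{alphabetsubset} extends membership to every $\beta>\b(w)$.

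For the lower bound I would first rule out $\bar\beta(w)=1$. If it held, then $w_{[l,\infty)} \lesseq \Omega_\beta$ for every $\beta>1$; but the coordinate-wise convergence $\betaone \to u$ as $\beta \to 1^+$ (together with the fact that $\Omega_\beta$ differs from $\betaone$ only after its period) forces $\Omega_\beta$ to match $u$ on any fixed prefix once $\beta$ is close enough to~$1$, eventually producing $\Omega_\beta \less w_{[l,\infty)}$ and contradicting $w \in \Wb$. Thus $\bar\beta := \bar\beta(w) > 1$. Continuity of $\beta \mapsto f_{w_{[k,\infty)}}(\beta)$ on $(1,\infty)$, coupled with $f_{w_{[k,\infty)}}(\beta) \in [0,1]$ for all $\beta>\bar\beta$ and all $k$, then yields $w \in \mathcal{W}_{-\bar\beta}$, proving the additional claim.

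It remains to equate $\bar\beta$ with $\b(w)$. Since $w \notin \mathcal{W}_{-\beta}$ for $\beta<\bar\beta$, continuity forces some value $f_{w_{[k,\infty)}}(\bar\beta)$ to sit on the boundary of $[0,1]$; a boundary value of $0$ at index $k$ combined with $f_{w_{[k,\infty)}}(\beta) = (w_k+1)/\beta - f_{w_{[k+1,\infty)}}(\beta)/\beta$ forces $w_k=0$ and $f_{w_{[k+1,\infty)}}(\bar\beta)=1$, so without loss of generality some $f_{w_{[k,\infty)}}(\bar\beta)=1$. Lemma~\ref{monotone} applied to $w_{[k,\infty)} \lesseq w_{[l,\infty)}$ (both in $\mathcal{W}_{-\bar\beta}$) then gives $f_{w_{[l,\infty)}}(\bar\beta) \ge 1$, and the reverse inequality follows from $w_{[l,\infty)} \in \mathcal{W}_{-\bar\beta}$; hence $w_{[l,\infty)}$ is a $-\bar\beta$-representation of $1$, and uniqueness in Theorem~\ref{SteinerSoln} forces $\bar\beta=\b(w)$. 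I expect the main obstacle to be the step $\bar\beta(w)>1$, which requires a prefix-convergence statement for $\betaone\to u$ only implicit in the preceding text; the transfer from $f_{w_{[k,\infty)}}(\bar\beta)=0$ to $f_{w_{[k+1,\infty)}}(\bar\beta)=1$ via the recursion is a secondary technicality.
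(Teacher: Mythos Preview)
Your treatment of the case $w_{[k,\infty)}\lesseq u$ for all $k$ is correct and matches the paper. Your upper bound $\bar\beta(w)\le\b(w)$ in the case $w_{[l,\infty)}\gess u$ is also fine, as is the continuity argument giving $w\in\mathcal W_{-\bar\beta(w)}$ once $\bar\beta(w)>1$ is known. The problems lie in the lower bound.

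First, the step you yourself flag: showing $\bar\beta(w)>1$ by arguing that $\Omega_\beta$ agrees with $u$ on long prefixes as $\beta\to1^+$. The paper only records that $\betaone\to u$, and $\Omega_\beta$ may differ from $\betaone$ (in the even-periodic case $\Omega_\beta=(a_1\dots(a_{2r}-1)0)^\infty$). You would need to control how the period length behaves as $\beta\to1^+$; nothing in the surrounding text gives this.

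Second, and more seriously, the inference ``$w\notin\mathcal W_{-\beta}$ for all $\beta<\bar\beta$, hence by continuity some $f_{w_{[k,\infty)}}(\bar\beta)\in\{0,1\}$'' is not justified. For each $\gamma<\bar\beta$ you get an index $k_\gamma$ with $f_{w_{[k_\gamma,\infty)}}(\gamma)\notin[0,1]$, but $k_\gamma$ may vary with $\gamma$; without a uniform bound (e.g.\ finitely many distinct shifts) you cannot pass to the limit and conclude that a \emph{fixed} index hits the boundary at $\bar\beta$. The hypothesis that $w_{[l,\infty)}$ dominates all shifts does not rescue this: from $f_{w_{[k_\gamma,\infty)}}(\gamma)>1$ you cannot deduce $f_{w_{[l,\infty)}}(\gamma)>1$, since Lemma~\ref{monotone} requires both words to lie in $\mathcal W_{-\gamma}$, which is exactly what fails.

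The paper avoids both difficulties by arguing the lower bound directly: assume $w\in\mathcal W_{-\gamma}$ for some $\gamma<\b(w)$, use Lemma~\ref{monotone} and Theorem~\ref{Steiner} to get $w_{[l,\infty)}\less\mathfrak a_\gamma\less\mathfrak a_{\b(w)}$, then apply Lemma~\ref{monotone} again at $\beta=\b(w)$ to force $f_{\mathfrak a_\gamma}(\b(w))=1$, contradicting the uniqueness in Theorem~\ref{SteinerSoln}. This two-step use of Steiner's results sidesteps any continuity or limit argument and handles the $\bar\beta(w)>1$ issue automatically.
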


\begin{proof} We consider two cases, depending on whether $w_{[l, \infty)} \gess u$ or $w_{[l, \infty)} \lesseq u$ (which implies that 
$w_{[k, \infty)} \lesseq u$ for all $k \geq 1$). By Definition~\ref{barb}, these cases correspond to $\b(w) > 1$ and to $\b(w)=1$, respectively.

For the case $w_{[l, \infty)} \gess u$, let $\beta = \b(w)$. We will show that $w\in\Wb$ and that $w\notin\mathcal{W}_{-\gamma}$ for $\gamma<\beta$,  where it will follow that $\bar\beta(w)=\beta$.  By Definition \ref{barb}, $w_{[l, \infty)}$ is a $-\beta$-representation of 1.  Since $w_{[k, \infty)} \lesseq w_{[l, \infty)}$ for all $k \geq 1$, we have $w \in \Wb$.  Now let $\gamma < \beta$ and suppose for contradiction that $w \in \mathcal{W}_{-\gamma}$.  Then $f_{w_{[k, \infty)}}(\gamma) \leq 1$ for all $k \geq 1$ and we first claim this inequality is strict.  If $f_{w_{[l, \infty)}}(\gamma) = 1$, the word $w_{[l, \infty)}$ would be a $-\gamma$-representation of $1$.  However, this is a contradiction to Theorem \ref{SteinerSoln} since $w_{[l, \infty)}$ is already a $-\beta$-representation of $1$.  Therefore, $f_{w_{[l, \infty)}}(\gamma) < 1 = f_{\mathfrak{a}_{\gamma}}(\gamma)$.  By Lemma \ref{monotone} and Theorem \ref{Steiner}, we must have $w_{[l, \infty)} \less \mathfrak{a}_{\gamma} \less \mathfrak{a}_{\beta}$.  By Lemma \ref{monotone} and the fact that $f_{w_{[l, \infty)}}(\beta)=1$, we conclude that $f_{\mathfrak{a}_{\gamma}}(\beta) = 1$.  However, this is impossible by Theorem \ref{SteinerSoln} because $\mathfrak{a}_{\gamma}$ is already a $-\gamma$-representation of $1$.  We  may now conclude that $w \notin \mathcal{W}_{-\gamma}$ and $\bar{\beta}(w) = \b(w)$ in the case that $\b(w) > 1$.  

Now consider the case $w_{[l, \infty)} \lesseq u$. Take any $\beta > 1$.  By Equation \eqref{eq:u-ab}, $u \less \betaone$.  
Since $w_{[k, \infty)} \lesseq u\less \betaone$ for all $k \geq 1$, we have that $w \in \Wb$. Since this holds for all $\beta>1$, it follows that $\bar{\beta}(w)=1$. We conclude that $\bar{\beta}(w) = \b(w)$ in all cases.   
\end{proof}

\begin{lem} If $1 < \beta \leq \beta'$, then 
$$\Al(\Sb) \subseteq \Al(\Sigma_{-\beta'})$$
\end{lem}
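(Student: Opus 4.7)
The plan is to reduce the statement to the set inclusion $\Wb \subseteq \mathcal{W}_{-\beta'}$ established in Lemma~\ref{alphabetsubset}, together with the observation that the shift operation and the alternating lexicographic order $\less$ do not depend on $\beta$.

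The case $\beta = \beta'$ being trivial, I would assume $\beta < \beta'$. Given $\pi \in \Al(\Sb)$, I would select a witness $w \in \Wb$ with $\Pat(w, \Sb, n) = \pi$ for the appropriate $n$. By Lemma~\ref{alphabetsubset}, $\Wb \subsetneq \mathcal{W}_{-\beta'}$, and so $w$ also lies in $\mathcal{W}_{-\beta'}$. Now both $\Sb$ and $\Sigma_{-\beta'}$ are just the shift operation $w_1 w_2 w_3 \dots \mapsto w_2 w_3 \dots$, so their iterates on $w$ produce the same sequence of infinite words. Moreover, $\less$ is defined purely combinatorially, independently of $\beta$. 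It follows that the relative order of the first $n$ iterates is the same under either map, and therefore $\Pat(w, \Sigma_{-\beta'}, n) = \Pat(w, \Sb, n) = \pi$, which shows that $\pi \in \Al(\Sigma_{-\beta'})$.

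This argument mirrors the proof of Lemma~\ref{lem:monotoneN} in the integer setting, where the containment $\WN \subseteq \mathcal{W}_{N+1}$ played the role that Lemma~\ref{alphabetsubset} plays here. The only substantive content is Lemma~\ref{alphabetsubset} itself; there is no further obstacle, since once one knows that the ambient word set only grows with $\beta$, monotonicity of the set of allowed patterns is immediate from the fact that computing a pattern from $w$ uses only shifts and the $\beta$-independent order $\less$.
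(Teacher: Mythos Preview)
Your proof is correct and follows essentially the same approach as the paper: invoke Lemma~\ref{alphabetsubset} to get $\Wb \subseteq \mathcal{W}_{-\beta'}$, then observe that $\Pat(w,\Sigma_{-\beta},n)=\Pat(w,\Sigma_{-\beta'},n)$ since the shift and the order $\less$ are $\beta$-independent. Your explicit handling of the equality case $\beta=\beta'$ is a small improvement over the paper's terse version, which leaves that trivial case implicit.
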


\begin{proof} This follows from  Lemma \ref{alphabetsubset} and the fact that for $w \in \Wb\subseteq\mathcal{W}_{-\beta'}$, we have $$\Pat(w, \Sigma_{-\beta}, n) =\Pat(w, \Sigma_{-\beta'}, n).$$  \end{proof}

\begin{defn}\label{def:Bpi} For any permutation $\pi$, let 
$$\B(\pi) = \inf \{ \beta : \pi \in \Al(\Sigma_{-\beta}) \}.$$ 
Equivalently, 
$$\B(\pi) = \inf \{ \bar\beta(w) : \Pat(w, \Sigma_{-\beta}, n) = \pi \}.$$
\end{defn}  

We call $\B(\pi)$ the {\em negative shift-complexity} of $\pi$.
Alternatively, $\B(\pi)$ is the supremum of the set of values $\beta$ such that $\pi$ is a forbidden pattern of $\Sigma_{-\beta}$. Thinking of  $\Sigma_{-\beta}$ as a family of maps parametrized by $\beta$, the numbers of the form $\B(\pi)$ are the values of $\beta$ where we obtain additional patterns as we increase $\beta$. In the rest of this section, we show that these values are the same for the $-\beta$-transformation $\Tb$.

\begin{lem} \label{inTb} If $\pi \in \Al(\Sigma_{-\beta})$ and $\gamma > \beta$,  then $\pi \in \Al( T_{-\gamma})$. 
\end{lem}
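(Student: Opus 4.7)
The plan is to take a word $w \in \Wb$ realizing $\pi$ under $\Sigma_{-\beta}$ and produce from it a point $x \in (0,1]$ whose $T_{-\gamma}$-orbit realizes $\pi$. Fix $w \in \Wb$ with $\Pat(w, \Sigma_{-\beta}, n) = \pi$. By Lemma~\ref{alphabetsubset}, $w \in \mathcal{W}_{-\gamma}$, and since the alternating lexicographic order $\less$ and the shift map do not depend on the base, $\Pat(w, \Sigma_{-\gamma}, n) = \pi$ as well. If I can show that $w$ actually lies in the smaller set $\mathcal{W}_{-\gamma}^0$, then Lemma~\ref{itos} provides $x = f_w(\gamma) \in (0,1]$ with $\Pat(x, T_{-\gamma}, n) = \pi$, finishing the proof.

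The main obstacle is thus to establish $f_{w_{[k, \infty)}}(\gamma) > 0$ for every $k \geq 1$; this is also the step that uses $\gamma > \beta$ essentially. I would argue by contradiction using the identity $\gamma f_v(\gamma) = (v_1 + 1) - f_{v_{[2,\infty)}}(\gamma)$: if $f_{w_{[k,\infty)}}(\gamma) = 0$ for some $k$, then $w_k = 0$ and $f_{w_{[k+1, \infty)}}(\gamma) = 1$, so the shift $v := w_{[k+1, \infty)}$ is a $-\gamma$-representation of $1$. Since $\Wb$ is shift-invariant, $v \in \Wb$, and Lemma~\ref{lem:Wb} then gives $v \lesseq \Omega_\beta$. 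The proof of Lemma~\ref{alphabetsubset} shows $\Omega_\beta \less \mathfrak{a}_\gamma$, yielding the chain $v \lesseq \Omega_\beta \less \mathfrak{a}_\gamma$ inside $\mathcal{W}_{-\gamma}$.

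Applying Lemma~\ref{monotone} to each of these inequalities sandwiches $f_{\Omega_\beta}(\gamma)$ between $f_v(\gamma) = 1$ and $f_{\mathfrak{a}_\gamma}(\gamma) = 1$, forcing $f_{\Omega_\beta}(\gamma) = 1$. Hence $\Omega_\beta$ would be simultaneously a $-\beta$-representation of $1$ (by Lemma~\ref{lem:Wb}) and a $-\gamma$-representation of $1$. Equation~\eqref{eq:wmaxsubwords} gives $\Omega_\beta \gesseq \Omega_{\beta, [k, \infty)}$ for all $k$, and Equation~\eqref{eq:u-ab} together with $\mathfrak{a}_\beta \lesseq \Omega_\beta$ gives $\Omega_\beta \gess u$, so Theorem~\ref{SteinerSoln} applies to $\Omega_\beta$ and guarantees a unique base for which $\Omega_\beta$ is a $-\beta'$-representation of $1$. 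The distinct values $\beta$ and $\gamma$ contradict this uniqueness, establishing $w \in \mathcal{W}_{-\gamma}^0$ and thereby completing the argument.
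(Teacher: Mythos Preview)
Your proof is correct and follows essentially the same approach as the paper: both arguments pass from $w\in\Wb$ to $w\in\mathcal{W}_{-\gamma}^0$ by invoking the uniqueness in Theorem~\ref{SteinerSoln} to rule out $\Omega_\beta$ being a $-\gamma$-representation of~$1$, and then apply Lemma~\ref{isexpansion} and Lemma~\ref{itos}. The only organizational difference is that the paper first establishes the global bound $f_{\Omega_\beta}(\gamma)<1$ and then derives $f_{w_{[k,\infty)}}(\gamma)\in(0,1)$ for all $k$ directly from $w_{[k,\infty)}\lesseq\Omega_\beta$ and the recursion, whereas you argue by contradiction at a specific $k$ and use the sandwich $v\lesseq\Omega_\beta\less\mathfrak{a}_\gamma$ to force $f_{\Omega_\beta}(\gamma)=1$; the ingredients and the key step are identical.
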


\begin{proof} Suppose that $\pi \in \Al(\Sigma_{-\beta})$ and $\gamma>\beta$. Let $w \in \Wb \subseteq \mathcal{W}_{-\gamma}$ be such that \\ $\Pat(w, \Sigma_{-\beta}, n) = \pi$.  We claim that $f_{w_{[k, \infty)}}(\gamma) \in (0, 1)$ for all $k \geq 1$.  By Theorem \ref{SteinerSoln}, since $\wmax$ is a $-\beta$-representation of $1$, $\wmax$ cannot also be a $-\gamma$-representation of $1$.  Additionally, since $w \in \Wb$, we also have $w_{[k, \infty)} \lesseq \wmax$ for all $k \geq 1$.   Lemma \ref{monotone} implies that $f_{w_{[k, \infty)}}(\gamma)\le f_{\wmax}(\gamma) < 1$ for all $k \geq 1$. Moreover, for all $k \geq 1$, we have $f_{w_{[k, \infty)}}(\gamma) = -\frac{w_k + 1}{-\gamma} + \frac{1}{-\gamma} f_{w_{[k + 1, \infty)}}(\gamma) > 0$,  because $w_k \geq 0$ and $f_{w_{[k + 1, \infty)}}(\gamma) < 1$.  We conclude that $f_{w_{[k, \infty)}}(\gamma) \in (0, 1)$ for all $k \geq 1$.  

By Lemma \ref{isexpansion}, $w$ is the $-\gamma$-expansion of the point $f_{w}(\gamma) \in (0, 1)$, and $w \in \mathcal{W}_{- \gamma}^0$.  By Lemma~\ref{itos}, $f$ gives an order isomorphism between the map $T_{-\gamma}$ on $((0, 1], <)$ and the map $\Sigma_{-\gamma}$ on $(\mathcal{W}_{-\gamma}^0, \less)$.  Hence,  $\Pat(f_w(\gamma), T_{-\gamma}, n) = \Pat(w, \Sigma_{-\gamma}, n) = \Pat(w, \Sigma_{-\beta}, n) = \pi$,
and so $\pi \in \Al(T_{-\gamma})$.
  \end{proof}

The next result shows that the definition of $\B(\pi)$ is not affected if we consider the map $\Tb$ instead of $\Sb$.

\begin{prop} \label{domain}
$$\B(\pi) = \inf \{ \beta : \pi \in \Al(\Tb) \}.$$ 
\end{prop}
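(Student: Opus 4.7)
The plan is to show that the two infima coincide by establishing matching inequalities between $\B(\pi)$ and $\B'(\pi) := \inf\{\beta : \pi \in \Al(T_{-\beta})\}$.

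For the inequality $\B(\pi) \le \B'(\pi)$, I would first show that $\Al(T_{-\beta}) \subseteq \Al(\Sigma_{-\beta})$ for every $\beta > 1$. Indeed, if $\pi \in \Al(T_{-\beta})$, pick $x \in (0, 1]$ with $\Pat(x, T_{-\beta}, n) = \pi$ and let $w = \varepsilon_1(x)\varepsilon_2(x)\dots$ be the $-\beta$-expansion of $x$, which lies in $\Wb^0 \subseteq \Wb$. By Lemma~\ref{itos}, the map $w \mapsto f_w(\beta)$ is an order-isomorphism between $(\Wb^0, \less)$ and $((0, 1], <)$ that conjugates $\Sigma_{-\beta}$ with $T_{-\beta}$. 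In particular, it preserves patterns along orbits, so $\Pat(w, \Sigma_{-\beta}, n) = \Pat(x, T_{-\beta}, n) = \pi$, and $\pi \in \Al(\Sigma_{-\beta})$. Taking infima over all $\beta$ with $\pi \in \Al(T_{-\beta})$ gives $\B(\pi) \le \B'(\pi)$.

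For the reverse inequality, I would invoke Lemma~\ref{inTb} directly: for any $\beta$ with $\pi \in \Al(\Sigma_{-\beta})$ and every $\gamma > \beta$, we have $\pi \in \Al(T_{-\gamma})$. Hence for every $\beta$ in $\{\beta : \pi \in \Al(\Sigma_{-\beta})\}$ the whole interval $(\beta, \infty)$ is contained in $\{\gamma : \pi \in \Al(T_{-\gamma})\}$. Taking infima yields $\B'(\pi) \le \beta$ for every such $\beta$, hence $\B'(\pi) \le \B(\pi)$, completing the proof.

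There is no significant obstacle here; the statement is essentially bookkeeping once Lemmas~\ref{itos} and~\ref{inTb} are in hand. The only subtle point to address explicitly is that $\Al(\Sigma_{-\beta})$ may strictly contain $\Al(T_{-\beta})$, because words in $\Wb \setminus \Wb^0$ are itineraries for $\Sb$ without corresponding to $-\beta$-expansions of any point in $(0,1]$. This potential gap, however, does not affect the infimum: Lemma~\ref{inTb} guarantees that any such extra pattern is recovered as a genuine $T_{-\gamma}$-pattern for all $\gamma>\beta$, so both sets have the same infimum.
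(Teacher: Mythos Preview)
Your proof is correct and follows essentially the same approach as the paper: both directions use Lemma~\ref{itos} for the inclusion $\Al(\Tb)\subseteq\Al(\Sb)$ and Lemma~\ref{inTb} for the reverse inequality on infima. The only cosmetic difference is that the paper picks an explicit midpoint $\gamma=\tfrac{1}{2}(\beta+\B(\pi))$ to apply Lemma~\ref{inTb}, whereas you phrase the same step as ``$(\beta,\infty)\subseteq\{\gamma:\pi\in\Al(T_{-\gamma})\}$ for every $\beta$ with $\pi\in\Al(\Sb)$.''
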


\begin{proof} By Lemma~\ref{itos}, $\Sb$ on $(\Wb^0, \less)$ and $\Tb$ on $((0, 1], < )$ are order-isomorphic. Since we consider the domain of $\Sb$ to be $\Wb\supseteq\Wb^0$, we have that $\Al(\Sb)\supseteq\Al(\Tb)$, and so $\B(\pi)=\inf \{ \beta : \pi \in \Al(\Sb) \}\le\inf \{ \beta : \pi \in \Al(\Tb) \}$.
To prove the inequality $\B(\pi)\ge\inf \{ \beta : \pi \in \Al(\Tb) \}$, we show that if $\beta > \B(\pi)$, then $\pi \in \Al(\Tb)$. 
To see this, let $\gamma = \frac{1}{2}(\beta + \B(\pi))$.  Since $\gamma>\B(\pi)$, we have $\pi \in \Al(\Sigma_{-\gamma})$.  By Lemma \ref{inTb}, $\beta > \gamma$ implies that $\pi \in \Al(\Tb)$.
\end{proof}

\section{Building words}\label{sec:words}

In the remaining two sections we give a method to compute $\B(\pi)$ for any given permutation $\pi$. The idea of this section is to construct a word $w$ such that $\B(\pi) = \bar{\beta}(w)$. This word will have an index $l$ such that $w_{[l, \infty)} \gesseq w_{[k, \infty)}$ for all $k \geq 1$, and so we can apply Lemma~\ref{betcor} to deduce that $\bar{\beta}(w) = \b(w)$.  In Section~\ref{sec:B}, we will express this quantity as the largest real solution to a polynomial. 
In the rest of the paper, we will use the term {\em subword} of $w$ to specifically mean a word of the form $w_{[i, \infty)}$ for some $i \geq 1$ (sometimes this is also called a shift of $w$).

The construction depends on features of $\pi$ such as the parity of $n-\ell$ (where $\ell$ is the index such that $\pi_\ell=n$) and whether $\pi$ is regular, cornered or collapsed. In nearly every case, we define a collection of words $w^{(m)}$ such that $w^{(m)}$ induces $\pi$ for $m \geq n-1$, and given any other $v \in \WN$ inducing $\pi$, there is an $m$ large enough so that $\bar{\beta}(w^{(m)}) \leq \bar{\beta}(v)$.  
This inequality will follow from Lemma \ref{betab} using that $w^{(m)}_{[l, \infty)} \less v_{[l, \infty)}$ for sufficiently large $m$.  The sequence of words $w^{(m)}$ will be constructed so that, as $m \rightarrow \infty$, it approaches a fixed word $w$ with maximal subword $w_{[\ell, \infty)}$.  Moreover, $w$ satisfies $\B(\pi) = \bar{\beta}(w) = \b(w)$, where the last equality follows from Lemma \ref{betcor}. 

In the rest of this section, fix $N=\N(\pi)$, and let $1<\beta \leq N$. Let $\zeta$ be a prefix defined by a valid $-N$-segmentation of $\hat{\pi}$ (as in Definition~\ref{segmentation}), which exists by Lemma \ref{geq}. Recall that $\zeta$ is uniquely determined if $\pi$ is regular, by Lemma~\ref{numberseg}. Define $x$, $y$, $p$ and $q$ as in Section~\ref{sec:-N}, and let $\ell$ be the index such that $\pi_{\ell} = n$. Let $\pi_h$ be the maximum of $\pi_x, \pi_{x + 1}, \pi_{x + 2}, \dots \pi_n$, and notice that if $\ell \geq x$, then $h = \ell$.  

When $n -\ell$ is odd (which implies that $\pi_n \neq n$), we define $s^{(m)} = \zeta p^{2m} \wend$ where 
\begin{equation}\label{eq:smdef} 
\wend = \begin{cases}
z_{[x, h-1]} \wmin & \text{if both }  h-x  \text{ and } |p| \text{ are even} \hfill (\ref{eq:smdef}.1)\\
&  \text{(unless the condition before Equation \eqref{eq:wend} holds),} \\
p z_{[x, h-1]} \wmax & \text{if  } h-x  \text{ is even  and }  |p| \text{ is odd,} \hfill (\ref{eq:smdef}.2)\\
z_{[x, h-1]} \wmax & \text{if } h - x \text{ is odd.} \hfill (\ref{eq:smdef}.3)
\end{cases} 
\end{equation}

In the special case that $h-x$ and $|p|$ are even and there is an index $x \le j < h$ such that $h - j$ is odd and $z_{[j, h-1]} = z_{[\ell, \ell + h - j -1]}$, we define
\begin{equation} \label{eq:wend}  \wend = z_{[x, h-1]}(z_{[j, h-1]})^\infty, \end{equation}
where $j$ is chosen so that $\pi_j > \pi_{j'}$ for any other indices $j'$ with this property. 

When $n - \ell$ is even and $\pi_n \neq 1$, we define $t^{(m)} = \zeta q^{2m} \wend$ where 
\begin{equation}\label{eq:tmdef} 
\wend= \begin{cases}
z_{[y, h-1]} \wmin & \text{ if } h-y \text{ is odd and } |q| \text{ is even}  \hfill (\ref{eq:tmdef}.1)\\ 
& \text{ (unless the condition before Equation \eqref{eq:wendq} holds),} \\
qz_{[y, h-1]} \wmax & \text{ if both } h-y \text{   and } |q| \text{ are odd,} \hfill (\ref{eq:tmdef}.2)\\
z_{[y, h-1]} \wmax & \text{ if } h - y \text{ is even.} \hfill (\ref{eq:tmdef}.3)
\end{cases}
\end{equation}
In the special case that $h-x$ is odd, $|q|$ is even, and there is an index $y \leq j < h$ such that $h - j$ is odd and $z_{[j, h-1]} = z_{[\ell, \ell + h - j -1]}$, we define \begin{equation} \label{eq:wendq} \wend = z_{[y, h-1]}(z_{[j, h-1]})^\infty,  \end{equation} where we choose the index $j$ such that $\pi_j > \pi_{j'}$ for all other indices $j'$ with this property.

Note that for permutations $\pi$ satisfying $n-\ell$ is even and $\pi_n=1$, neither $s^{(m)}$ nor $t^{(m)}$ is defined. We will deal with this case separately in Proposition \ref{prop:regularnone}.

The rest of the section is dedicated to proving the propositions listed in Table \ref{tab:cases} which, for a given permutation $\pi$, describe a word $w$ such that $\B(\pi) = \b(w)$. In most cases, the word $w$ arises by taking the limit as $m \rightarrow \infty$ of either of the words $s^{(m)}$ or $t^{(m)}$.  We will show in Lemmas \ref{lem:pwless}, \ref{betainduces} and \ref{lem:inWb} that for $\beta$ large enough, $\Pat(s^{(m)}, \Sigma_{-}, n) = \pi$ and $\Pat(t^{(m)}, \Sigma_{-}, n) = \pi$ in the cases when $s^{(m)}$ and $t^{(m)}$ are defined. Moreover, for such a $\beta$, we will show that there is a sufficiently large $m$ such that $s^{(m)}, t^{(m)} \in \Wb$.  We begin with Lemmas \ref{dinfinity}, \ref{largeenough} and \ref{betab} to establish a few properties of words in $\Wb$ that will be important throughout the section.

\begin{example} To illustrate the need for the special case in Equation~\eqref{eq:wend}, consider $\pi = 81735642$, a regular permutation such that $n-\ell=8-1$ is odd, $h-x=6-4$ even and $|p|=4$ even.  Then $\hat{\pi} = 7{\star}526431$, $\asc(\hat{\pi}) = 1$ and $\N(\pi) = 2$.  A valid $-2$-segmentation of $\hat{\pi}$ is given by $(e_0, e_1, e_2) = (0, 4, 8)$,
 defining the prefix $\zeta = 1010110$.  

Suppose that we defined $\wend = z_{[x, h-1]} \wmin = 01 \wmin$, following Equation \eqref{eq:smdef}. In order to have $\wend \in \Wb$, each subword of $\wend$ would be in $\Wb$, and so $1 \wmin \lesseq \wmax$.  This inequality becomes $1 0 \wmax \lesseq \wmax$ and, by Lemma \ref{dinfinity} below, this would imply that $\wmax \gesseq (10)^\infty$.  Thus, for $\beta < 2$, we have $\wend \notin \Wb$ and $s^{(m)} \notin \Wb$.  

However, a small adjustment to $\wend$, as described in Equation \eqref{eq:wend}, yields $\wend \in \Wb$ for all $\beta > \b(\zeta p^\infty) \approx 1.96$.   In this case, the index $j = 5$ is such that $h - j$ is odd and $z_{[j, h - 1]} = z_{[\ell, \ell + h - j - 1]} = 1$, and so we take $\wend =  z_{[x, h-1]} (z_{[j, h-1]})^\infty = 01(1)^\infty = 01^\infty$.  We will verify in this section that such a choice of $\wend$ satisfies the conditions needed for $s^{(m)}$ to induce $\pi$, and also that $s^{(m)} \in \Wb$ for all $\beta > \b(\zeta p^\infty)$.  
\end{example}

\begin{lem} \label{dinfinity} Let $d$ be a finite word, and $v$ an infinite word. The following are equivalent:
\begin{center}
(a) $v \gess d^\infty$; \qquad (b) $v \gess d^m v$ for all $m \geq 1$; \qquad (c)  $v \gess d v$.
\end{center}
Likewise, if we replace $\gess$ with $\less$ in (a),(b),(c), the resulting three statements are equivalent to each other. \end{lem}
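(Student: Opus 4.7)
The plan is to reduce all three statements in the $\gess$ half of the lemma to a single numerical sign condition that does not depend on which of (a), (b), (c) one is testing. First I would dispose of the degenerate case $v = d^\infty$: here $dv = v$ and $d^m v = v$ for every $m \ge 1$, so the three statements all read $v \gess v$ and are simultaneously false, so the equivalence is trivial. From now on I assume $v \ne d^\infty$.

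Under this assumption, let $k \ge 0$ be the largest integer with $v = d^k v^*$ for some infinite word $v^*$; maximality is equivalent to saying that $v^*$ does not begin with $d$, and $k$ is finite because $v \ne d^\infty$. Let $i$ with $1 \le i \le |d|$ be the smallest index such that $v^*_i \ne d_i$. The crucial remark is that each of the three candidate comparands, namely $d^\infty$, $dv = d^{k+1}v^*$, and $d^m v = d^{k+m}v^*$ for any $m \ge 1$, shares with $v = d^k v^*$ the prefix $d^k\, d_1 \dots d_{i-1}$ of length $k|d|+i-1$ and then differs from $v$ at position $k|d|+i$, where $v$ reads $v^*_i$ while the other word reads $d_i$. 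By the definition of $\less$, each of $v \gess d^\infty$, $v \gess d^m v$, and $v \gess dv$ therefore holds if and only if
\[
(-1)^{k|d|+i}(d_i - v^*_i) > 0.
\]
Since this is a single criterion independent of which comparison is being tested, the equivalence (a) $\iff$ (b) $\iff$ (c) follows immediately.

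The analogous statement with $\less$ in place of $\gess$ is obtained by running the exact same argument with the sign inequality reversed to $(-1)^{k|d|+i}(d_i - v^*_i) < 0$; no additional work is needed.

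The only real obstacle is careful bookkeeping: one must verify once and for all that the first position of disagreement really is $k|d|+i$ in all three comparisons, with $v$ contributing $v^*_i$ and the other word contributing $d_i$. Once that is in hand, one never has to do a separate case split on the parity of $|d|$ or on the value of $m$, and the proof is essentially immediate.
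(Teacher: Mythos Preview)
Your proof is correct and takes a genuinely different route from the paper's. The paper proves the implications (a)$\Rightarrow$(b) and (c)$\Rightarrow$(a) separately (with (b)$\Rightarrow$(c) trivial), splitting into cases according to the parity of $|d|$ and using squeeze-type arguments of the form ``$d^{2i-1}v \less v \lesseq d^{2i}v$ forces $v=d^\infty$'' to rule out the bad direction. Your approach instead locates, once and for all, the single position $k|d|+i$ at which $v$ first disagrees with each of $d^\infty$, $dv$, and $d^mv$, and observes that the letter on the other side is always $d_i$; this collapses all three comparisons to the same sign condition $(-1)^{k|d|+i}(d_i-v^*_i)>0$. The advantage of your argument is uniformity: no parity case split, no separate treatment of different $m$, and the $\less$ version follows by flipping one inequality. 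The paper's argument, while longer, has the minor virtue of not needing to introduce the auxiliary decomposition $v=d^kv^*$, but your approach is cleaner overall.
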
 

\begin{proof} We will prove (a) implies (b) and that (c) implies (a). The fact that (b) implies (c) is trivial. The proof of the corresponding statements for $\less$ is analogous.

To show that (a) implies (b), first suppose that $|d|$ is odd.  For all $i\ge1$, (a) implies that $v\gess d^{\infty} \gess d^{2i-1}v$, proving (b) when $m$ is odd.  Now suppose that we had $v\lesseq d^{2i} v$ for some $i$.  Then $d^{2i-1}v \less v \lesseq d^{2i} v$, which forces $v = d^\infty$, causing a contradiction. This proves (b) for even $m$.

Now consider the case when $|d|$ is even. Suppose for contradiction that $v\lesseq d^m v $ for some $m \geq 1$.  Then 
$v\lesseq d^m v\lesseq d^{2m} v \lesseq \dots \lesseq d^{km}v$ for all $k \geq 1$, contradicting (a). 

To prove that (c) implies (a), first consider the case when $|d|$ is odd. Suppose we had $v \lesseq d^2 v$.  Then
$dv \less v \lesseq d^2 v$,
which forces $v = d^\infty$, causing a contradiction.  Therefore, $v \gess d^2 v$, and we obtain $v \gess d^2 v \gess d^4 v \gess \ldots \gess d^{2k} v$ for all $k\ge 1$.  In the case when $|d|$ is even, $v \gess d v$ implies that $v \gess d v \gess d^2 v \gess \ldots \gess d^k v$ for all $k \geq 1$.  We conclude that (a) holds in all cases. 
\end{proof}

\begin{lem} \label{largeenough} Let $w \in \WN$ be a word for which there is an index $l$ such that $w_{[l, \infty)} \gesseq w_{[k, \infty)}$ for all $k \geq 1$. If $\beta > \b(w)$, then $\wmin \less w_{[k, \infty)} \less \wmax$ for all $k \geq 1$. \end{lem}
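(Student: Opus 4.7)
The plan is to use Lemma~\ref{betcor} to first place $w$ inside $\Wb$, which already gives the weak inequalities $\wmin \lesseq w_{[k,\infty)} \lesseq \wmax$, and then upgrade both to strict inequalities by ruling out equality at $w_{[l,\infty)}$ (from which all other strict inequalities will cascade).

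First I would observe that, by the hypothesis on $l$ and Lemma~\ref{betcor}, one has $\bar\beta(w) = \b(w)$, and moreover $w \in \Wb$ whenever $\beta > \b(w)$: if $\b(w) = 1$ this is directly part of the conclusion of Lemma~\ref{betcor}, while if $\b(w) > 1$ we combine $w \in \mathcal{W}_{-\b(w)}$ from Lemma~\ref{betcor} with the inclusion $\mathcal{W}_{-\b(w)} \subseteq \Wb$ from Lemma~\ref{alphabetsubset}. Consequently, $w_{[k,\infty)} \lesseq \wmax$ for all $k \geq 1$ by Lemma~\ref{lem:Wb}.

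The main step is to show $w_{[l,\infty)} \less \wmax$ strictly; since $w_{[k,\infty)} \lesseq w_{[l,\infty)}$ for all $k$, this will give $w_{[k,\infty)} \less \wmax$ for all $k$. I split into the two cases of Definition~\ref{barb}. If $\b(w) > 1$, then by definition $w_{[l,\infty)}$ is a $-\b(w)$-representation of $1$, and by the uniqueness part of Theorem~\ref{SteinerSoln} it cannot also be a $-\beta$-representation of $1$ for $\beta \neq \b(w)$, so $f_{w_{[l,\infty)}}(\beta) \neq 1$. Since $w \in \Wb$ forces $f_{w_{[l,\infty)}}(\beta) \in [0,1]$, we conclude $f_{w_{[l,\infty)}}(\beta) < 1 = f_{\wmax}(\beta)$, and Lemma~\ref{monotone} then yields $w_{[l,\infty)} \less \wmax$. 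If instead $\b(w) = 1$, then by Definition~\ref{barb} every shift of $w$ satisfies $w_{[k,\infty)} \lesseq u$, so the inequality $u \less \betaone$ from Equation~\eqref{eq:u-ab} together with $\betaone \lesseq \wmax$ (which holds because $\betaone \in \Wb$) gives $w_{[k,\infty)} \lesseq u \less \betaone \lesseq \wmax$ directly.

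Finally, the lower bound $\wmin \less w_{[k,\infty)}$ follows by a short contradiction argument: the weak inequality is already guaranteed by $w \in \Wb$, and if we had $w_{[k,\infty)} = \wmin = 0\,\wmax$ for some $k$, then the next shift would satisfy $w_{[k+1,\infty)} = \wmax$, contradicting what we just proved. The only step requiring real content is the strict inequality $w_{[l,\infty)} \less \wmax$, and I expect the parity-sensitive uniqueness statement from Theorem~\ref{SteinerSoln} to do all of the heavy lifting there.
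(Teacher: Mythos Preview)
Your proof is correct and follows essentially the same route as the paper: both first place $w$ in $\Wb$ via Lemma~\ref{betcor} (and Lemma~\ref{alphabetsubset}), then rule out $w_{[l,\infty)} = \wmax$ in the two cases of Definition~\ref{barb}, and finally observe that $\wmin = 0\,\wmax$ forces the strict lower bound. The only cosmetic difference is that in the case $\b(w)>1$ you invoke the uniqueness clause of Theorem~\ref{SteinerSoln} and then Lemma~\ref{monotone}, whereas the paper uses the equivalent ``largest real root'' characterization of $\b(w)$ directly; these are interchangeable by Lemma~\ref{lem:unique_largest}.
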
 

\begin{proof}  Fix $\beta > \b(w) \geq 1$. Then Lemmas~\ref{betcor} and~\ref{alphabetsubset} imply that $w\in\Wb$, and so $\wmin \lesseq w_{[k, \infty)} \lesseq \wmax$ for all $k \geq 1$.  It remains to show that both inequalities are strict.  

First consider the case $w_{[l, \infty)} \gess u$. If we had $w_{[k, \infty)} = \wmax$ for some $k$, then $w_{[l, \infty)} = \wmax$, and so $f_{w_{[l, \infty)}}(\beta) = 1$.  Since, by Definition \ref{barb}, $\b(w)$ is the largest real solution to $f_{w_{[l, \infty)}}(x) = 1$, this implies $\b(w) \geq \beta$, a contradiction.  Hence, there is no $k \geq 1$ such that $w_{[k, \infty)} = \wmax$.  Since $\wmin = 0 \wmax$, there is no $k$ such that $w_{[k, \infty)} = \wmin$ either.  We conclude that $\wmin \less w_{[k, \infty)} \less \wmax$ for all $k \geq 1$.

Now consider the case $w_{[l, \infty)} \lesseq u$, which means that $w_{[k, \infty)} \lesseq u$ for all $k \geq 1$.   
Hence, $w_{[k, \infty)} \lesseq u \less \wmax$, where the second inequality follows from Equation \eqref{eq:u-ab}, and by the previous argument we also have $w_{[k, \infty)} \gess \wmin$ for all $k \geq 1$.  
\end{proof}

\begin{lem} \label{betab} Let  $v$ be a word for which there exists an index $l$ such that $v_{[l, \infty)} \gesseq v_{[k, \infty)}$ for all $k \geq 1$.  If $w$ is a word such that $w_{[k, \infty)} \lesseq v_{[l, \infty)}$ for all $k \geq 1$, then $\bar{\beta}(w) \leq \bar{\beta}(v)$.   On the other hand, if $w_{[i, \infty)} \gesseq v_{[l, \infty)}$ for some $i \geq 1$, then $\bar{\beta}(w) \ge \bar{\beta}(v)$. 
\end{lem}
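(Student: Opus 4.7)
The plan is to reduce both inequalities to the equivalent description of $\Wb$ given by Lemma~\ref{lem:Wb}, namely that $w \in \Wb$ if and only if $w_{[k, \infty)} \lesseq \wmax$ for every $k \geq 1$. The hypothesis that $v_{[l, \infty)}$ dominates every subword of $v$ lets us reduce checking membership of $v$ in $\Wb$ to the single comparison $v_{[l, \infty)} \lesseq \wmax$.

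For the first inequality, I would fix an arbitrary $\beta > \bar\beta(v)$ and show that $w \in \Wb$, which by Definition~\ref{def: betaw} gives $\bar\beta(w) \leq \beta$; taking the infimum over such $\beta$ then yields $\bar\beta(w) \leq \bar\beta(v)$. To obtain $w \in \Wb$: by Definition~\ref{def: betaw} there is some $\beta'$ with $\bar\beta(v) \leq \beta' < \beta$ and $v \in \mathcal{W}_{-\beta'}$ (if $\bar\beta(v) = 1$ we can take $\beta'$ anywhere in $(1,\beta)$ and apply Lemma~\ref{betcor}), and then Lemma~\ref{alphabetsubset} upgrades this to $v \in \Wb$. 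Lemma~\ref{lem:Wb} then forces $v_{[l, \infty)} \lesseq \wmax$, so for every $k \geq 1$ the hypothesis gives
\[
w_{[k, \infty)} \lesseq v_{[l, \infty)} \lesseq \wmax,
\]
and a second application of Lemma~\ref{lem:Wb} produces $w \in \Wb$, as required.

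For the second inequality, I would run the symmetric argument. Fix $\beta > \bar\beta(w)$; by the reasoning above, $w \in \Wb$ and therefore $w_{[i, \infty)} \lesseq \wmax$ by Lemma~\ref{lem:Wb}. Combining the assumption $v_{[l, \infty)} \lesseq w_{[i, \infty)}$ with the maximality of $v_{[l, \infty)}$ among subwords of $v$ gives, for every $k \geq 1$,
\[
v_{[k, \infty)} \lesseq v_{[l, \infty)} \lesseq w_{[i, \infty)} \lesseq \wmax,
\]
so Lemma~\ref{lem:Wb} yields $v \in \Wb$ and hence $\bar\beta(v) \leq \beta$. Taking the infimum over $\beta > \bar\beta(w)$ gives $\bar\beta(v) \leq \bar\beta(w)$.

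No serious obstacle is expected; the argument is essentially a routine bookkeeping with Lemma~\ref{lem:Wb}, Lemma~\ref{alphabetsubset} and the definition of the infimum. The only subtlety worth noting is that the statement uses a non-strict inequality $v_{[l, \infty)} \gesseq v_{[k, \infty)}$, which is exactly what is needed so that the chain of $\lesseq$-inequalities leading to membership in $\Wb$ covers every subword in a single step, without requiring any case distinction between strict and non-strict cases.
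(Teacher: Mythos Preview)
Your proposal is correct and follows essentially the same route as the paper's proof: both parts reduce membership in $\Wb$ to the single comparison with $\wmax$ via Lemma~\ref{lem:Wb}, the only cosmetic difference being that the paper phrases the first inequality as a proof by contradiction while you argue directly via an arbitrary $\beta>\bar\beta(v)$. Your parenthetical appeal to Lemma~\ref{betcor} in the case $\bar\beta(v)=1$ is unnecessary (the definition of the infimum together with Lemma~\ref{alphabetsubset} already yields $v\in\Wb$, and in the second part $w$ need not satisfy the hypothesis of Lemma~\ref{betcor}), but this does no harm to the argument.
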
 

\begin{proof} To prove the first statement, suppose for contradiction that $\bar{\beta}(w) > \bar{\beta}(v)$ and take $\bar{\beta}(w) > \beta > \bar{\beta}(v)$.  Then $v \in \Wb$ and $v_{[l, \infty)} \lesseq \wmax$.   Since $w_{[k, \infty)} \lesseq v_{[l, \infty)}$ for all $k\ge 1$, we have $w_{[k, \infty)} \lesseq \wmax$ as well.  Hence, by Lemma \ref{lem:Wb}, $w \in \Wb$.  Therefore, $\bar{\beta}(w) \leq \beta$, a contradiction to our choice of $\beta$.

To prove the second statement, suppose now that $w_{[i, \infty)} \gesseq v_{[l, \infty)}$ for some $i$, and let $\beta > 1$ be such that $w \in \Wb$.  Then  $v_{[l, \infty)} \lesseq  w_{[i, \infty)} \lesseq \wmax$, using Lemma~\ref{lem:Wb}, and so $v_{[k, \infty)} \lesseq \wmax$ for all $k \geq 1$.  Therefore, $v \in \Wb$, and we conclude that $\bar{\beta}(w) \ge \bar{\beta}(v)$.
\end{proof}

By construction of $s^{(m)}$ and $t^{(m)}$, letting $w$ be one of these two words, we claim there is an index~$l$ such that $w_{[l, \infty)} \gesseq w_{[k, \infty)}$ for all $k \geq 1$.  To see this, recall that Equation~\eqref{eq:wmaxsubwords} states that $\wmax$ is greater than or equal to all of its subwords.  Since both $s^{(m)}$ and $t^{(m)}$ end in $\wmax$, they have only finitely many subwords that could potentially be greater than $\wmax$. Choosing the maximum among $\wmax$ and these subwords gives the index $l$.

Moreover, if $w$ is an eventually periodic word, then $w$ also has an index $l$ such that $w_{[l, \infty)} \gesseq w_{[k, \infty)}$ for all $k \geq 1$.  Indeed, $w$ has only finitely many distinct subwords in this case, so we may take $w_{[l, \infty)}$ to be the maximum of these with respect to $\gess$.  All of the words that we consider in this section will either be subwords of $s^{(m)}$ or $t^{(m)}$, or else eventually periodic. Thus, they will automatically satisfy this condition.  

For convenience, we define the following conditions, which will be used in the next few lemmas:
\begin{eqnarray}
\label{eq:smdefined} & n-\ell \text{ is odd and }  \beta > \b(\zeta p^\infty),\\
\label{eq:tmdefined} & \pi_n\neq 1 \text{ and } n-\ell \text{ is even and } \beta > \b(\zeta q^\infty).
\end{eqnarray}

Note that~\eqref{eq:smdefined} and~\eqref{eq:tmdefined} include the conditions for $s^{(m)}$ and $t^{(m)}$ to be defined respectively, and that condition~\eqref{eq:smdefined} implies that $\pi_n\neq n$.

\begin{lem} \label{lem:pwless} If \eqref{eq:smdefined} holds and $\wend$ is defined as in Equations~\eqref{eq:smdef} and~\eqref{eq:wend}, then $\wend \less p \wend$ and $\wend \less p^\infty$.  
Likewise, if \eqref{eq:tmdefined} holds and $\wend$ is defined as in Equations~\eqref{eq:tmdef} and~\eqref{eq:wendq}, then $q \wend \less \wend$ and $q^\infty \less \wend$.  \end{lem}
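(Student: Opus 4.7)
By Lemma~\ref{dinfinity} applied with $v = \wend$ and $d = p$, the conclusions $\wend \less p\wend$ and $\wend \less p^\infty$ are equivalent, so it suffices to prove $\wend \less p^\infty$ in each defining case; analogously for the $t$-case, $q\wend \less \wend$ is equivalent to $q^\infty \less \wend$.

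In each of the three standard subcases of \eqref{eq:smdef}, $\wend$ and $p^\infty$ share a common initial segment: the prefix $z_{[x,h-1]}$ of $\wend$ matches the first $h-x$ letters of $p^\infty$, and in the case $\wend = p\,z_{[x,h-1]}\wmax$ they share in fact the longer prefix $p\,z_{[x,h-1]}$. Cancelling this shared prefix --- preserving or reversing the inequality according to the parity of its length --- transforms $\wend \less p^\infty$ into either $\wmin \less (p')^\infty$ (subcase (\ref{eq:smdef}.1), where both $h-x$ and $|p|$ are even) or $\wmax \gess (p')^\infty$ (subcases (\ref{eq:smdef}.2) and (\ref{eq:smdef}.3)), where $p' = z_{[h,n-1]}\,z_{[x,h-1]}$ is the cyclic rotation of $p$ such that $(p')^\infty = (p^\infty)_{[h-x+1,\infty)}$. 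Since $(p')^\infty$ is a shift of $p^\infty$, which is in turn a shift (hence a subword) of $\zeta p^\infty$, the hypothesis $\beta > \b(\zeta p^\infty)$ allows us to apply Lemma~\ref{largeenough} with $w = \zeta p^\infty$, yielding $\wmin \less (p')^\infty \less \wmax$ and hence both reduced inequalities.

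The main obstacle is the special case \eqref{eq:wend}, in which $\wend = z_{[x,h-1]}\,c^\infty$ with $c = z_{[j,h-1]}$ of odd length $h-j$. Cancelling the even-length prefix $z_{[x,h-1]}$ reduces $\wend \less p^\infty$ to $c^\infty \less (p')^\infty$; this no longer involves $\wmin$ or $\wmax$, so Lemma~\ref{largeenough} alone does not suffice. To handle this, I plan to use Lemma~\ref{dinfinity} in the opposite direction ($c^\infty \less (p')^\infty$ iff $c^\infty \less p'\,c^\infty$) and then exploit the defining identity $z_{[j,h-1]} = z_{[\ell, \ell + h - j - 1]}$ together with $\pi_\ell = n$ being the global maximum. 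Iterated application of Lemma~\ref{flipping} to the matched pairs $(j+r, \ell+r)$ for $0 \le r \le h - j - 1$ produces an alternating chain of strict inequalities between $\pi_{j+r}$ and $\pi_{\ell+r}$ which, combined with the maximality built into the choice of $j$, pins down the first differing position of $c^\infty$ and $(p')^\infty$ with the correct parity to conclude $c^\infty \less (p')^\infty$. The $t$-case --- including the parallel special case \eqref{eq:wendq} --- is handled by the same scheme with $q$ replacing $p$ and the roles of $\wmin$ and $\wmax$ swapped where appropriate.
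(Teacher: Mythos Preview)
Your treatment of the three standard subcases of \eqref{eq:smdef} is essentially the paper's argument: cancel the common prefix, reduce to $\wmin \less (p')^\infty$ or $\wmax \gess (p')^\infty$ with $p' = z_{[h,n-1]}z_{[x,h-1]}$, and invoke Lemma~\ref{largeenough}. Fine.

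The gap is in the special case \eqref{eq:wend}. You correctly reduce to showing $c^\infty \less (p')^\infty$ with $c = z_{[j,h-1]}$, but your plan to reach it is misdirected. Iterating Lemma~\ref{flipping} on the pairs $(j+r,\ell+r)$ compares positions $j$ and $\ell$; after $h-j$ steps you obtain only $\pi_h > \pi_{\ell+h-j}$, which says nothing about $c^\infty$ versus $(p')^\infty$. The relevant comparison is between positions $j$ and $h$: unwinding via Lemma~\ref{dinfinity}, $c^\infty \less (p')^\infty$ is equivalent to $z_{[j,n-1]}p^\infty \lesseq z_{[h,n-1]}p^\infty$, i.e.\ the shift of $\zeta p^\infty$ at $j$ is at most the shift at $h$. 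The paper obtains this not from Lemma~\ref{flipping} but by invoking that the Section~\ref{sec:-N} word $s$ induces $\pi$; since $\pi_j<\pi_h$, one gets $s_{[j,\infty)}\less s_{[h,\infty)}$, hence $z_{[j,n-1]}z_{[x,j-1]}\lesseq z_{[h,n-1]}z_{[x,h-1]}$ as finite words, and then Lemma~\ref{dinfinity} yields $(z_{[j,h-1]})^\infty \lesseq (p')^\infty$. This step is the missing idea in your sketch.

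You also omit the strictness argument, which is not automatic: equality $(z_{[j,h-1]})^\infty = (p')^\infty$ must be ruled out. The paper does this by a primitivity argument (using that $p'$, as a cyclic shift of $p$, is primitive or the square of an odd-length primitive word by Lemma~\ref{d2}), showing equality would force $j=x$ and hence $h-j=h-x$ even, contradicting that $h-j$ is odd. Your ``maximality built into the choice of $j$'' does not handle this, since the maximal choice of $j$ concerns competing indices $j'$ with the same defining property, not the comparison with $h$.
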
 
  
\begin{proof} We will prove the first statement; the second statement can be proved similarly.
Since $\beta > \b(\zeta p^\infty)$, Lemma \ref{largeenough} applied to $w=\zeta p^\infty$ for $k = h$ gives $ \wmin \less (z_{[h, n-1]} z_{[x, h-1]})^\infty \less \wmax$. Then Lemma \ref{dinfinity} implies  
\begin{equation}\label{eq:pwless}   \wmin \less z_{[h, n-1]} z_{[x, h-1]} \wmin \quad \text{and}  \quad \wmax \gess z_{[h, n-1]} z_{[x, h-1]} \wmax.
\end{equation}

We will consider each case from Equations~\eqref{eq:smdef} and~\eqref{eq:wend} in turn.  In case (\ref{eq:smdef}.1), we verify $\wend = z_{[x, h-1]} \wmin \less p z_{[x, h-1]} \wmin = p \wend$ by canceling the even-length prefix $z_{[x, h-1]}$ obtaining the first part of Equation \eqref{eq:pwless}. 
In case (\ref{eq:smdef}.2), $\wend = p z_{[x, h-1]} \wmax \less p^2 z_{[x, h-1]} \wmax = p \wend$ holds by canceling the odd-length prefix $p z_{[x, h-1]}$ and using the second part of Equation \eqref{eq:pwless}. 
In case (\ref{eq:smdef}.3), we verify $\wend = z_{[x, h-1]} \wmax$ $\less z_{[x, h-1]} (z_{[h, n-1]} z_{[x, h-1]}) \wmax = p \wend$ by canceling the odd-length prefix $z_{[x, h-1]}$ and using the second part of Equation \eqref{eq:pwless}.
This proves that $\wend \less p\wend$ in the three cases in Equation~\eqref{eq:smdef}  and Lemma \ref{dinfinity} gives $\wend \less p^\infty$.   

Finally, in the special case described in Equation \eqref{eq:wend}, we have $\wend = z_{[x, h-1]} (z_{[j, h-1]})^\infty$.  To verify that $\wend \less p^\infty$, note that canceling even-length equal prefixes $z_{[x, h-1]}$,
the inequality is equivalent to \begin{equation}\label{eq:show} (z_{[j, h-1]})^\infty \less z_{[h, n-1]} p^\infty = (z_{[h, n-1]} z_{[x, h-1]})^\infty.\end{equation}  In Section \ref{sec:-N}, we showed that the word $s$ defined in Equation~\eqref{eq:defs} induces $\pi$, implying that $$s_{[j, \infty)} = z_{[j, n-1]} z_{[x, j-1]} s_{[n + j, \infty)} \less z_{[h, n-1]} z_{[x, h-1]} s_{[n + h, \infty)} = s_{[h, \infty)}.$$   Therefore, $z_{[j, n-1]} z_{[x, j - 1]} \lesseq z_{[h, n-1]} z_{[x, h-1]}$, and so $(z_{[j, n-1]}  z_{[x, j-1]})^\infty \lesseq  (z_{[h, n-1]}z_{[x, h-1]})^\infty$, \\ which is equivalent to $z_{[j, h-1]} (z_{[h, n-1]} z_{[x, h-1]})^\infty \lesseq (z_{[h, n-1]} z_{[x, h-1]})^\infty$. 
By Lemma \ref{dinfinity}, we conclude that $$(z_{[j, h-1]})^\infty \lesseq (z_{[h, n-1]} z_{[x, h-1]})^\infty.$$
We claim that this inequality is strict.  To see this, notice that the 
word $z_{[h, n-1]} z_{[x, h-1]}$ is either primitive or the square of a primitive word, say $d'$, of odd length because it is a cyclic shift of $p$, which has this property by Lemma \ref{d2}.  Since $h - j < n - x$, the equality $(z_{[j, h-1]})^\infty = (z_{[h, n-1]} z_{[x, h-1]})^\infty$ would imply $z_{[j, h-1]} = d'$ and $z_{[h, n-1]} z_{[x, h-1]} = d'^2$. This would only be possible if $j = x$ and $z_{[j, h-1]} = z_{[x, h-1]} = d'$, a case eliminated because $h-x$ was assumed to be even.  Equation~\eqref{eq:show} follows.  Lastly, by Lemma \ref{dinfinity}, $\wend \less p^\infty$ implies that $\wend \less p \wend$.

\end{proof}

\begin{lem} \label{betainduces}  If \eqref{eq:smdefined} holds and $m \geq \frac{n-1}{2}$, then  $\Pat(s^{(m)},\Sigma_{-},n)=\pi$.  Likewise, if \eqref{eq:tmdefined} holds and $m \geq \frac{n-1}{2}$, then  $\Pat(t^{(m)},\Sigma_{-} ,n)=\pi$.
 \end{lem}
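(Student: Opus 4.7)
The strategy is to verify the hypotheses of Lemma~\ref{pattern} for the word $s^{(m)}$ (and, symmetrically, for $t^{(m)}$), noting that the proof of that lemma uses only properties of the alternating lexicographic order $\less$ and is insensitive to whether we work in $\WN$ or in $\Wb$. Since $s^{(m)}$ is a word over $\{0,1,\dots,N{-}1\}$ with $N=\lfloor\beta\rfloor+1$, we may view it as an element of $\WN$ and apply Lemma~\ref{pattern} verbatim, with $\Sigma_{-N}$ playing the role of $\Sigma_-$. Concretely, three things need to be checked: (i) $\Pat(s^{(m)},\Sigma_-,n)$ is defined; (ii) $s^{(m)}_{[x,\infty)} \gess s^{(m)}_{[n,\infty)}$; (iii) no $1\le c\le n$ satisfies $s^{(m)}_{[n,\infty)} \less s^{(m)}_{[c,\infty)} \less s^{(m)}_{[x,\infty)}$.

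Step (ii) is immediate from the construction: since $s^{(m)}_{[x,\infty)}=p^{2m+1}\wend$, $s^{(m)}_{[n,\infty)}=p^{2m}\wend$, and $|p^{2m}|=2m|p|$ is even, canceling the common prefix reduces the desired inequality to $p\wend \gess \wend$, which is precisely the first inequality supplied by Lemma~\ref{lem:pwless}. For (i) and (iii), I would mirror the proofs of Lemmas~\ref{twopointnine} and~\ref{nothingbetween}. The hypothesis $m\ge(n-1)/2$ ensures that $|p^{2m}|\ge n-1$, so any subword $s^{(m)}_{[c,\infty)}$ lying strictly between $s^{(m)}_{[n,\infty)}$ and $s^{(m)}_{[x,\infty)}$ must share the initial $p^{2m}$ with both, forcing $s^{(m)}_{[c,\infty)}=p^{2m}v$ for some $v$ with $\wend \less v \less p\wend$. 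The length condition further forces $c<x$, and the argument from Lemma~\ref{nothingbetween}, using that $p$ is either primitive or the square of a primitive word of odd length (Lemma~\ref{d2}), shows that $v$ must begin with $p$. This contradicts $v\less p\wend$ when $|p|$ is even; when $|p|$ is odd, it forces $\zeta=app$ for some $a$, whence Lemma~\ref{onecollapsed} yields $q=p^2$, contradicting the validity of the segmentation producing $\zeta$. Equality of two subwords among $s^{(m)}_{[c,\infty)}$ with $1\le c\le n$ would likewise produce one of these structural contradictions, establishing (i).

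The main technical obstacle is the special case in Equation~\eqref{eq:wend}, where $\wend = z_{[x,h-1]}(z_{[j,h-1]})^\infty$ does not end in a shift of $\wmin$ or $\wmax$. Here the conclusion $v$ begins with $p$ must be re-examined, because the periodic tail offers additional candidate matches; I would rule these out by exploiting that $|z_{[j,h-1]}|<|p|$ together with the primitivity analysis in Lemma~\ref{d2} (just as in the final paragraph of the proof of Lemma~\ref{lem:pwless}), showing that $v$ cannot coincide with a shift of $(z_{[j,h-1]})^\infty$ within the required range.

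Once (i)--(iii) are established, Lemma~\ref{pattern} yields $\Pat(s^{(m)},\Sigma_-,n)=\pi$. The argument for $t^{(m)}$ is entirely parallel: condition~\eqref{eq:tmdefined} substitutes $y$, $q$ for $x$, $p$; step (ii) becomes $t^{(m)}_{[y,\infty)} \less t^{(m)}_{[n,\infty)}$, which reduces via the even-length cancellation of $q^{2m}$ to $q\wend \less \wend$, the second inequality of Lemma~\ref{lem:pwless}; and the rest of the argument is symmetric, using the second half of Lemma~\ref{pattern} and the analogous special case in Equation~\eqref{eq:wendq}.
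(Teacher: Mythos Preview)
Your overall strategy---verify the three hypotheses of Lemma~\ref{pattern}---is exactly what the paper does, and step~(ii) is handled correctly. However, your argument for step~(iii) contains a genuine gap. You write that ``$v$ must begin with $p$. This contradicts $v\less p\wend$ when $|p|$ is even.'' But it does not: unlike in Lemma~\ref{nothingbetween}, the tail $\wend$ is in general \emph{not} the $\less$-minimal word, so the fact that $v$ begins with $p$ does not by itself force $v\gesseq p\wend$. The paper's argument is sharper: using the alignment forced by primitivity of $p$ (or of $d$ when $p=d^2$) together with the explicit structure $s^{(m)}=\zeta p^{2m}\wend$, one deduces that in fact $v=p^i\wend$ for some $i\ge 2$ (respectively $v=d^j\wend$ with $j\ge3$). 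Then the inequality $\wend\less p\wend$ from Lemma~\ref{lem:pwless}, iterated via Lemma~\ref{dinfinity}, yields $p\wend\less p^i\wend=v$, the desired contradiction. Similarly, your plan to ``mirror Lemma~\ref{twopointnine}'' for step~(i) does not transfer, since that lemma relies on locating the first occurrence of $\wminn$ in the tail; here the paper instead observes that $s^{(m)}_{[i,\infty)}=s^{(m)}_{[j,\infty)}$ with $i\neq j$ would force $\wend=p^k\wend$ (or $d^k\wend$), hence $\wend=p^\infty$, contradicting $\wend\less p^\infty$ from Lemma~\ref{lem:pwless}.

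Finally, your concern about the special case~\eqref{eq:wend} is misplaced in this lemma: the proof of Lemma~\ref{betainduces} uses only the two inequalities $\wend\less p\wend$ and $\wend\less p^\infty$, and Lemma~\ref{lem:pwless} already establishes these uniformly in all cases, including~\eqref{eq:wend}. No separate treatment is needed here.
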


\begin{proof} We will prove the statement for $s^{(m)}$; the proof for $t^{(m)}$ follows similarly.  Fix $m \geq \frac{n-1}{2}$  and let $s = s^{(m)}$. We will show that 
\begin{enumerate}[a)]
\item $s_{[x, \infty)} \gess s_{[n, \infty)}$,
\item there is no $1 \leq c \leq n$ such that $s_{[n, \infty)} \less s_{[c, \infty)} \less s_{[x, \infty)}$,
\item $\Pat(s, \Sigma_{-}, n)$ is defined.
\end{enumerate}    
Then Lemma \ref{pattern} can be applied to conclude that $s$ induces the pattern $\pi$.

To prove a), note that the first part of  Lemma \ref{lem:pwless} implies that $\wend \less p \wend$.  Therefore, $$s_{[n, \infty)} = p^{2m} \wend \less p^{2m+1} \wend = s_{[x, \infty)}.$$

To prove b), suppose for contradiction that $s_{[n, \infty)} \less s_{[c, \infty)} \less s_{[x, \infty)}$ for some $1 \leq c \leq n$, that is,
$$s_{[n, \infty)} = p^{2m} \wend \less s_{[c, \infty)} \less p^{2m+1} \wend = s_{[x, \infty)}.$$
This forces $s_{[c, \infty)} = p^{2m} v$ for some word $v$ satisfying 
\begin{equation}\label{eq:v} \wend \less v \less p \wend.\end{equation}  

Let us show that $c < x$.  If $p$ is primitive, this is because the first $p$ in $s_{[c, \infty)}$ cannot overlap with both the first and second occurrences of $p$ in $s_{[x, \infty)}$.  If $p$ is not primitive, then by Lemma \ref{d2}, $p = d^2$, where $d$ is primitive and $|d|$ is odd.  The only way to have $c > x$ would be if $v = d \wend$.  However, this is a contradiction with $\wend \less v \less d^2 \wend = p\wend$ because $d\wend \less d^2 \wend$ implies that $\wend \gess d \wend$.

To show that Equation~\eqref{eq:v} leads to a contradiction, consider two cases depending on whether $p$ is primitive or not:

\begin{itemize}
\item If $p$ is primitive, we claim that $v = p^i \wend$ for some $i \geq 2$. This is clear if $|p^{2m}| > n-1$ or $c>1$, because then one of the initial $2m$ occurrences of $p$ in $s_{[c, \infty)}$ must coincide with the first occurrence of $p$ in $s_{[x, \infty)}$. The other situation is when $|p| = 1$, $x = n-1$, $m = \frac{n-1}{2}$ and $c = 1$, but then $s_{[c, \infty)} = p^{n-1} s_{[x, \infty)}$ and $v = s_{[x, \infty)} = p^{2m+1} \wend$.

First suppose that $|p|$ is even.  Since $\wend \less p \wend$, we obtain $\wend \less p \wend \less p^2 \wend \less \cdots$.  Therefore, $p \wend \less p^i \wend = v$, contradicting Equation~\eqref{eq:v}.  

Now suppose that $|p|$ is odd.  The overlap between $s_{[c, \infty)}$ and $s_{[x, \infty)}$ causes $\zeta$ to be of the form $\zeta = app$.     By Lemma \ref{onecollapsed}, we have $q = p^2$, contradicting the fact that $\zeta$ was obtained from a valid $-N$-segmentation.    

\item If $p$ is not primitive, Lemma \ref{d2} implies that $p = d^2$ for some primitive word $d$ of odd length.   Since $|d^{2(2m)}| > n-1$, one of the initial $4m$ occurrences of $d$ in $s_{[c, \infty)}$ must coincide with the first occurrence of $d$ in $s_{[x, \infty)}$.  Therefore, $v = d^j \wend$ for some $j \geq 3$, using that $c < x$.  Moreover, by Lemma \ref{lem:pwless}, $\wend \less d^\infty$ and by Lemma \ref{dinfinity}, $\wend \less d^{j-2} \wend$.  It follows that $d^2 \wend \less d^{j} \wend = v$, contradicting Equation~\eqref{eq:v}.
\end{itemize}

Finally, to prove c), recall that $\wend < p^\infty$ by Lemma \ref{lem:pwless}.  Suppose for contradiction that
$s_{[i, \infty)} = s_{[j, \infty)}$ for some $i, j \leq n$ with $i \neq j$. Then $z_{[i, n-1]} p^{2m} \wend = z_{[j, n-1]} p^{2m} \wend$. If $p$ is primitive, we must have $\wend = p^k \wend$, and if not, by Lemma~\ref{d2}, we have $p = d^2$ and $\wend = d^k \wend$, for some $k \geq 1$.  In either case, this would imply $\wend = p^\infty$, contradicting Lemma \ref{lem:pwless}.
\end{proof}

\begin{lem} \label{lem:inWb} If \eqref{eq:smdefined} holds, there exists some $m_0 \geq \frac{n-1}{2}$ such that $s^{(m)} \in \Wb$ for all $m \ge m_0$.  Likewise, if \eqref{eq:tmdefined} holds, there exists some $m_0 \geq \frac{n-1}{2}$ such that $t^{(m)} \in \Wb$ for all $m \ge m_0$. \end{lem}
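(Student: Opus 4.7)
The plan is to prove the claim for $s^{(m)}$; the argument for $t^{(m)}$ is parallel via the dual assertions of Lemma~\ref{lem:pwless}. By Lemma~\ref{lem:Wb}, it suffices to find $m_0 \geq \frac{n-1}{2}$ such that $s^{(m)}_{[k,\infty)} \lesseq \wmax$ for all $m \geq m_0$ and $k \geq 1$. The main tool is that $\beta > \b(\zeta p^\infty)$: since $\zeta p^\infty$ is eventually periodic (so it has a maximal subword), Lemma~\ref{betcor} gives $\zeta p^\infty \in \Wb$, and Lemma~\ref{largeenough} then yields $(\zeta p^\infty)_{[k,\infty)} \less \wmax$ strictly for every $k \geq 1$.

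Because $\zeta p^\infty$ has only finitely many distinct subwords (the $n-1$ starting in $\zeta$ plus the $|p|$ cyclic shifts of $p^\infty$), the first-differing positions with $\wmax$ are bounded uniformly by some $R$. I would take $m_0$ large enough that $2 m_0 |p| > R$; then for any $k \leq n + 2m|p| - R$, the subword $s^{(m)}_{[k,\infty)}$ agrees with $(\zeta p^\infty)_{[k,\infty)}$ on a prefix containing the witnessing position, so $s^{(m)}_{[k,\infty)} \less \wmax$. This handles all but finitely many subwords, namely those whose starting index lies within the last $R$ positions of $p^{2m}$ or inside $\wend$. These have the form $p_{[r,|p|]}\, p^j\, \wend$ for some $1 \leq r \leq |p|$ and bounded $j \geq 0$, or are shifts of $\wend$; crucially, they stabilize and do not depend on $m$ once $m \geq m_0$.

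For these remaining subwords I would argue case by case on the definitions~\eqref{eq:smdef} and~\eqref{eq:wend}. Shifts lying entirely within the terminal $\wmin$ or $\wmax$ of $\wend$ reduce to shifts of $\wmax$ via $\wmin = 0\wmax$ and~\eqref{eq:wmaxsubwords}. Shifts starting within the initial block of $\wend$ would be compared to the corresponding subword of $\zeta p^\infty$ sharing a common prefix, using Lemma~\ref{largeenough} to control the continuations ($\wmin$ or $\wmax$ versus $z_{[h,n-1]}p^\infty$), together with Lemma~\ref{lem:pwless} and careful parity bookkeeping of the common prefix length. The main obstacle is case~\eqref{eq:smdef}.1, where $h-x$ and $|p|$ are both even and roughly half the shifts have a common prefix of the wrong parity; this is exactly the situation the special form~\eqref{eq:wend} is designed to fix. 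When an index $j$ satisfying the stated conditions exists, the replacement $\wend = z_{[x,h-1]}(z_{[j,h-1]})^\infty$ has a periodic tail of odd period (by the condition that $h-j$ is odd), and the maximality of $\pi_j$ among admissible indices ensures that the shifts of this tail are dominated by cyclic shifts of $p^\infty$, each of which is $\less \wmax$. I expect verifying this last step, tying the structural choice of $j$ to the parity-based comparison with $\wmax$, to be the most delicate part of the proof.
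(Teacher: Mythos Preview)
Your uniform-$R$ compactness argument for the bulk of the subwords is a legitimate alternative to the paper's opening move. The paper instead first fixes $m_0$ so that $s^{(m_0)}_{[\ell,\infty)} = z_{[\ell,n-1]}p^{2m_0}\wend \less \wmax$ (using that $n-\ell$ is odd and $\wend \less p^\infty$, so $z_{[\ell,n-1]}p^{2m}\wend$ decreases monotonically toward $z_{[\ell,n-1]}p^\infty \less \wmax$), and then invokes Lemma~\ref{betainduces} to get $s^{(m)}_{[i,\infty)} \lesseq s^{(m)}_{[\ell,\infty)} \less \wmax$ for all $1 \leq i < n$ at once. Your approach trades this use of Lemma~\ref{betainduces} for a slightly larger set of tail subwords to analyze by hand.

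The gap is in your treatment of the subwords $p_{[r,|p|]}\,p^j\,\wend = z_{[i,n-1]}p^j\wend$ with small $j$, which you only say ``stabilize'' and will be handled case by case. Comparing these directly to $z_{[i,n-1]}p^\infty$ (which you know is $\less \wmax$) does not work: when the shared prefix length $(n-i)+j|p|$ plus the agreement between $\wend$ and $p^\infty$ is shorter than the witnessing position for that subword, the parity of the prefix alone does not decide the comparison with $\wmax$. The paper handles these (its case b)) by comparing instead to $s^{(m)}_{[h,\infty)} = z_{[h,n-1]}p^{2m}\wend$: since $\pi_i < \pi_h$ for $x\le i < n$ and $s^{(m)}$ induces $\pi$ by Lemma~\ref{betainduces}, one obtains the finite-word inequality $z_{[i,n-1]}z_{[x,i-1]} \lesseq z_{[h,n-1]}z_{[x,h-1]}$, and since each $\wend$ begins with $z_{[x,h-1]}$ this yields $z_{[i,n-1]}p^k\wend \less s^{(m)}_{[h,\infty)} \less \wmax$ whenever the inequality is strict (with a separate $p=d^2$ argument otherwise). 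Likewise, in case~(\ref{eq:smdef}.1) for shifts $z_{[i,h-1]}\wmin$ with $h-i$ odd, the paper compares to $z_{[\ell,n-1]}p^\infty$ via $z_{[i,h-1]} \lesseq z_{[\ell,\ell+h-i-1]}$, again derived from Lemma~\ref{betainduces}; strict inequality gives the bound, and equality is precisely what triggers the exceptional form~\eqref{eq:wend}. So Lemma~\ref{betainduces} is not merely a convenience for the bulk---it is the source of the finite-word inequalities in $\zeta$ (reaching across the $p^{2m}$ block) that drive the entire tail analysis, and your outline needs to invoke it there as well.
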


\begin{proof}  We will prove the statement about $s^{(m)}$; the one about $t^{(m)}$ follows by a similar argument.  

Assume that \eqref{eq:smdefined} holds. By Lemma \ref{largeenough} applied to $\zeta p^\infty$ for $k = \ell$ and $k = x$, we obtain \begin{equation} \label{eq:ellx} z_{[\ell, n-1]} p^\infty\less \wmax \text{ and } \wmin \less p^\infty \less \wmax. \end{equation}   Moreover, for any index $1 \leq j < n$, applying Lemma \ref{largeenough} to $\zeta p^\infty$ for $k = j$ gives
\begin{equation} \label{eq:largeenoughineq} \wmin \less z_{[j, n-1]} p^\infty \less \wmax. \end{equation}

 By Lemma \ref{lem:pwless},  $\wend \less p^\infty$, and so $\wend \less p^2 \wend$ by Lemma \ref{dinfinity}.  Hence, $p^{2(i-1)} \wend \less p^{2i} \wend \less p^\infty$ for all $i \geq 1$.  Since $n -\ell$ is odd,  $$z_{[\ell, n-1]} p^\infty \less z_{[\ell, n-1]} p^{2i} \wend \less z_{[\ell, n-1]}p^{2(i-1)}\wend$$ for all $i \geq 1$.  Choose $m_0 \geq \frac{n-1}{2}$ to be large enough such that $z_{[\ell, n-1]}p^{2m_0} \wend \less \wmax$, which is guaranteed to exist because of Equation~\eqref{eq:ellx}. 
 
Let $m \geq m_0$, and recall the definition $s^{(m)} = \zeta p^{2m}\wend$.  To show that $s^{(m)}\in \Wb$, we will show that
\begin{enumerate}[a)]
\item $z_{[i, n-1]}p^{2m} \wend \lesseq \wmax$ for all $1 \leq i < n$;
\item $z_{[i, n-1]} p^k \wend \lesseq \wmax$ for all $x \leq i < n$ and $0 \leq k < 2m$, in the cases described by Equation \eqref{eq:smdef};
\item  ${\wend}_{[i, \infty)} \lesseq \wmax$ for all $i \geq 1$, in the cases described by Equation \eqref{eq:smdef};
\item $z_{[i, n-1]}p^k \wend \lesseq \wmax$ for all $x \leq i < n$ and $0 \leq k < 2m$, and ${\wend}_{[i, \infty)} \lesseq \wmax$ for all $i \geq 1$, in the case described by Equation \eqref{eq:wend}. \end{enumerate} 
 
\medskip

First we prove a). Since $m \geq m_0$, we have $z_{[\ell, n-1]}p^{2m} \wend \lesseq z_{[\ell, n-1]} p^{2m_0} \wend \less \wmax$.  By Lemma~\ref{betainduces}, $s^{(m)}$ induces $\pi$, and we conclude that \begin{equation} \label{eq:induceineq} s^{(m)}_{[i, \infty)}=z_{[i, n-1]}p^{2m} \wend \lesseq z_{[\ell, n-1]} p^{2m} \wend \less \wmax \end{equation} for all $1 \leq i < n$.  

\medskip

Next we prove b). 
Since $s^{(m)}$ induces $\pi$, we have that $z_{[i, n-1]}p^{2m} \wend \less z_{[h, n-1]} p^{2m} \wend$ for all $x \leq i < n$, $i \neq h$.  Therefore, $z_{[i, n-1]} z_{[x, i -1]} \lesseq z_{[h, n-1]} z_{[x, h-1]}$ for each such $i$, and we consider two cases:
\begin{itemize} 
\item If $z_{[i, n-1]} z_{[x, i-1]} \less z_{[h, n-1]} z_{[x, h-1]}$, using that $\wend$ begins with $z_{[x, h-1]}$ in each of the cases in Equation \eqref{eq:smdef}, we conclude that, for all $0 \leq k < 2m$,
$$z_{[i, n-1]} p^k \wend \less z_{[h, n-1]}p^{2m} \wend \less \wmax,$$ where the second inequality follows from Equation \eqref{eq:induceineq}. 

\item If $z_{[i, n-1]} z_{[x, i-1]} = z_{[h, n-1]} z_{[x, h-1]}$, we also have $z_{[i, n-1]} p^\infty = z_{[h, n-1]} p^\infty$.  Since  $i\neq h$, we find that $p$ cannot be primitive. By Lemma \ref{d2}, $p = d^2$, where $d$ is primitive and $|d|$ is odd. It follows that $\{i,h\}=\{x,x+|d|\}$, leaving two possibilities.

If $x = h$, we are in case (\ref{eq:smdef}.1), and we have $\wend =  \wmin$ and $z_{[h, i-1]} = z_{[i, n-1]} = d$. By Equation \eqref{eq:ellx}, $d^\infty = p^\infty \gess \wmin$, and Lemma \ref{dinfinity} implies that $d \wmin \gess \wmin$, that is, $d \wend \gess \wend$.  
Adding the odd-length prefix $d^{2m+1}$ to both sides, we get $s_{[h, \infty)} = d^{2m + 2} \wend \less d^{2m+1} \wend = s_{[i, \infty)}$.  This is a contradiction since $\pi_h > \pi_i$ by definition of $h$, and $s^{(m)}$ induces $\pi$ by Lemma~\ref{betainduces}. 

If $x = i$, we are in case (\ref{eq:smdef}.3) since $h-x=h-i$ is odd. We have $\wend = d \wmax$ and $z_{[i, h-1]} = z_{[h, n-1]} = d$, and so $z_{[i, n-1]}p^k \wend = d^{2k + 2} \wmax$.  Since $p^\infty = d^\infty \less \wmax$ by Equation \eqref{eq:ellx}, we obtain by Lemma \ref{dinfinity} that $d^j \wmax \less \wmax$ for all $j \geq 1$, proving b).
\end{itemize}

\medskip

To show c),  we must verify that $\wend \in \Wb$ in each of the cases defined by Equation \eqref{eq:smdef}.  Recall that  ${\wmax}_{[j, \infty)} \lesseq \wmax$ for all $j \geq 1$ by Equation~\eqref{eq:wmaxsubwords} . 

In case (\ref{eq:smdef}.1), we have $\wend = z_{[x, h-1]} \wmin$.   Let $i\ge1$, and suppose first that $h-i$ is even.  We verify that $z_{[i, h-1]} \wmin \less z_{[i, n-1]} p^\infty$ by canceling $z_{[i, h-1]}$ to obtain Equation \eqref{eq:largeenoughineq} for $j = h$.  Moreover, we have $z_{[i, n-1]} p^\infty \less \wmax$ by Equation \eqref{eq:largeenoughineq}, allowing us to conclude that $z_{[i, h-1]} \wmin \less z_{[i, n-1]} p^\infty \less \wmax$.   Now suppose that $h- i$ is odd.  As in case b), since $\pi_i < \pi_{\ell}$, we have $z_{[i, n-1]}p^{2m} \wend \less z_{[\ell, n-1]}p^{2m} \wend$, which implies that $z_{[i, h-1]} \lesseq z_{[\ell, \ell + h - i - 1]}$.  If this inequality is strict, then $z_{[i, h-1]} \wmin \less z_{[\ell, n-1]} p^\infty \less \wmax$ by Equation~\eqref{eq:ellx}.  Otherwise, this gives $z_{[i , h-1]} = z_{[\ell, \ell + h - i - 1]}$ and $i$ would be an index satisfying the conditions of the exceptional case in Equation \eqref{eq:wend}.

In case (\ref{eq:smdef}.2), we defined $\wend = p z_{[x, h-1]}\wmax$.    We will begin by verifying that $z_{[i, h-1]} \wmax \less \wmax$ for each $x \leq i < h$. Since $\pi_i < \pi_h$, Lemma \ref{flipping} implies that $z_{[i, n-1]} z_{[x, i-1]} \lesseq z_{[h, n-1]} z_{[x, h-1]}$.  Therefore, $(z_{[i, n-1]} z_{[x, i-1]})^\infty \lesseq (z_{[h, n-1]} z_{[x, h-1]})^\infty$, which can be rewritten as
\begin{equation} \label{eq:wendless}  z_{[i, h-1]} (z_{[h, n-1]} z_{[x, h-1]})^\infty \lesseq  (z_{[h, n-1]} z_{[x, h-1]})^\infty. \end{equation} By Lemma \ref{dinfinity}, we obtain $(z_{[i, h-1]})^\infty \lesseq (z_{[h, n-1]} z_{[x, h-1]})^\infty \less \wmax$, where the second inequality follows from Equation \eqref{eq:largeenoughineq} for $j = h$.  Applying Lemma \ref{dinfinity} to $(z_{[i, h-1]})^\infty \less \wmax$ gives
\begin{equation} \label{eq:zihwmax} z_{[i, h-1]}\wmax \less \wmax. \end{equation} 
The remaining subwords of $\wend$ are of the form $z_{[i, n-1]} z_{[x, h-1]} \wmax$, where $x \leq i < h$, and we must now verify that $z_{[i, n-1]} z_{[x, h-1]} \wmax \less \wmax$ holds for each index $i$. 
As in the proof of case b), the equality $z_{[i, n-1]} z_{[x, i-1]} = z_{[h, n-1]} z_{[x, h-1]}$ would imply that 
$p = d^2$, which is impossible because $|p|$ is odd. Thus, $z_{[i, n-1]} z_{[x, i-1]} \less z_{[h, n-1]} z_{[x, h-1]}$.  
It follows that
$$z_{[i, n-1]} z_{[x, h-1]} \wmax \less z_{[h, n-1]} p^{2m} \wend \less \wmax,$$
where the second inequality comes from Equation \eqref{eq:induceineq}.  

In case (\ref{eq:smdef}.3), we defined $\wend = z_{[x, h-1]} \wmax$.  Since the proof of Equation \eqref{eq:zihwmax} does not depend on the parity of $h-x$, the same argument shows that $z_{[i, h-1]} \wmax \less \wmax$ for each $x \leq i < h$ in this case.

\medskip

To show d), let us consider the exceptional case defined in Equation \eqref{eq:wend}, in which $h-x$ and $|p|$ are both even and there is an index $x \le j < h$ such that $h-j$ is odd and $z_{[j, h-1]} = z_{[\ell, \ell + h - j - 1]}$.  We defined $\wend =  z_{[x, h -1]} (z_{[j, h-1]})^\infty$, where $\pi_j > \pi_{j'}$ for any other indices $j'$ satisfying this property. We will first verify that $z_{[i, h-1]}(z_{[j, h-1]})^\infty \less \wmax$ for indices $x \le i < h$, thus showing $\wend \in \Wb$. 

As in case b), $\pi_j < \pi_h$ implies that $z_{[j, n-1]} p^{2m} \wend \less z_{[h, n-1]} p^{2m} \wend$, and so $z_{[j, n-1]} z_{[x, j-1]} \lesseq z_{[h, n-1]} z_{[x, h-1]}$. Therefore, $(z_{[j, n-1]} z_{[x, j-1]})^\infty \lesseq (z_{[h, n-1]} z_{[x, h-1]})^\infty$, and by Lemma \ref{dinfinity} we have
\begin{equation} \label{eq:exception} (z_{[j, h-1]})^\infty \lesseq (z_{[h, n-1]} z_{[x, h-1]})^\infty. \end{equation} 
  Furthermore,  this implies that $z_{[j, n-1]} p^\infty \lesseq (z_{[j, h-1]})^\infty$, which is verified by canceling the odd-length prefix $z_{[j, h-1]}$ to obtain Equation \eqref{eq:exception}.  
  
Since we chose $\pi_j > \pi_{j'}$ for any other indices $j'$ satisfying $z_{[j', h-1]} = z_{[\ell, \ell + h - j -1]}$, either \\ $z_{[i, i + h - j - 1]} \less z_{[j, h-1]}$ or $\pi_i < \pi_j$.  In the first case,  the inequality $z_{[i, n-1]}p^{2m} \wend \less z_{[j, n-1]} p^{2m} \wend$ follows immediately.  Otherwise, as in case b), the fact that $s^{(m)}$ induces $\pi$ and $\pi_i < \pi_j$ implies that $z_{[i, n-1]}p^{2m} \wend \less z_{[j, n-1]} p^{2m} \wend$.  We now have $z_{[i, n-1]} z_{[x, i-1]} \lesseq z_{[j, n-1]} z_{[x, j-1]}$, hence $(z_{[i, n-1]}z_{[x, i-1]})^\infty \lesseq (z_{[j, n-1]}z_{[x, j-1]})^\infty = z_{[j, n-1]}p^\infty$ and Lemma \ref{dinfinity} implies that
$$(z_{[i, j-1]})^\infty \lesseq z_{[j, n-1]} p^\infty \lesseq (z_{[j, h-1]})^\infty.$$
Lastly, by Lemma \ref{dinfinity}, we obtain $z_{[i, j-1]} (z_{[j, h-1]})^\infty \lesseq (z_{[j, h-1]})^\infty \lesseq (z_{[h, n-1]} z_{[x, h-1]})^\infty \less \wmax$, where the second inequality holds by Equation \eqref{eq:exception} and the third inequality comes from Equation \eqref{eq:largeenoughineq} taken at the index $h$. Hence, $z_{[i, h-1]} (z_{[j, h-1]})^\infty \less \wmax$ for each $x \le i < j$, thus $\wend \in \Wb$ in all cases.   
Now we must verify that $z_{[i, n-1]} p^k \wend \less \wmax$ for all $x \le i < n$ and $0 \leq k < 2m$.  Since $\pi_i < \pi_h$, we have $z_{[i, n-1]} z_{[x, i-1]} \lesseq z_{[h, n-1]}z_{[x, h-1]}$.  If this inequality is strict, it is immediate that $z_{[i, n-1]} p^k \wend \less (z_{[h, n-1]} z_{[x, h-1]})^\infty \less \wmax$, because $z_{[i, n-1]}p^k \wend$ begins with $z_{[i, n-1]} z_{[x, i-1]}$ for each $k$.  Otherwise, as in the proof of case b), the equality $z_{[i, n-1]}z_{[x, i-1]} = z_{[h, n-1]} z_{[x, h-1]}$ implies that $p = d^2$, where $|d|$ is odd, and $\{i, h\} = \{x, x + |d|\}$. The fact that Equation \eqref{eq:exception} implies that $x \neq h$ in this case tells us that $i = j = x$, $h = x + |d|$, and so $z_{[j, h-1]} = d$ and $z_{[i, n-1]} = p$.  Therefore, $\wend = d^\infty$ and $z_{[i, n-1]} p^k \wend = d^\infty \less \wmax$ follows by Equation \eqref{eq:ellx}.

We have now verified that $s^{(m)}_{[i, \infty)} \lesseq \wmax$ for all $i \geq 1$ and we conclude that $s^{(m)} \in \Wb$.
\end{proof}

Putting Lemmas \ref{betainduces} and \ref{lem:inWb} together we obtain the following.

\begin{prop} \label{prop:pat} If \eqref{eq:smdefined} holds, then there exists an $m_0 \geq \frac{n-1}{2}$ such that $s^{(m)}\in\Wb$ and $\Pat(s^{(m)}, \Sigma_{-\beta}, n) = \pi$ for all $m \geq m_0$.   If \eqref{eq:tmdefined} holds, then there exists an $m_0 \geq \frac{n-1}{2}$ such that $t^{(m)}\in\Wb$ and $\Pat(t^{(m)}, \Sigma_{-\beta}, n) = \pi$ for all $m \geq m_0$.  \end{prop}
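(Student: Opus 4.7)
The plan is to deduce this proposition as an immediate corollary of the two preceding lemmas, since both have already done the substantive work. First I would address the statement about $s^{(m)}$: assuming condition \eqref{eq:smdefined}, Lemma~\ref{lem:inWb} supplies an $m_0 \geq \frac{n-1}{2}$ such that $s^{(m)} \in \Wb$ for every $m \geq m_0$. For any such $m$, since $m \geq \frac{n-1}{2}$, Lemma~\ref{betainduces} then yields $\Pat(s^{(m)}, \Sigma_{-\beta}, n) = \pi$ (noting that $\Sigma_{-}$ on $s^{(m)} \in \Wb$ agrees with $\Sigma_{-\beta}$). Combining these two facts at the same value of $m_0$ gives the first claim.

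The argument for $t^{(m)}$ is symmetric: assuming condition \eqref{eq:tmdefined}, apply Lemma~\ref{lem:inWb} to obtain $m_0 \geq \frac{n-1}{2}$ with $t^{(m)} \in \Wb$ for all $m \geq m_0$, and then invoke Lemma~\ref{betainduces} to conclude $\Pat(t^{(m)}, \Sigma_{-\beta}, n) = \pi$ for the same range of $m$.

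There is essentially no obstacle here beyond making sure the threshold $m_0$ is chosen consistently. The real technical content already lives in Lemmas~\ref{betainduces} and~\ref{lem:inWb}: the former uses Lemma~\ref{pattern} together with the inequalities provided by Lemma~\ref{lem:pwless} to verify that $s^{(m)}$ and $t^{(m)}$ induce $\pi$, while the latter carefully checks, using Lemma~\ref{largeenough} applied to $\zeta p^\infty$ (or $\zeta q^\infty$) and the exceptional constructions in Equations~\eqref{eq:wend} and~\eqref{eq:wendq}, that every subword of $s^{(m)}$ (resp.\ $t^{(m)}$) is bounded above by $\wmax$. Proposition~\ref{prop:pat} merely bundles these two conclusions into a single statement tailored for use in the remainder of Section~\ref{sec:words}.
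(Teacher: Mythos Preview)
Your proposal is correct and matches the paper's approach exactly: the paper simply states that the proposition is obtained by ``putting Lemmas~\ref{betainduces} and~\ref{lem:inWb} together,'' and your argument spells out precisely this combination, taking the $m_0$ from Lemma~\ref{lem:inWb} and invoking Lemma~\ref{betainduces} for $m \geq m_0 \geq \frac{n-1}{2}$.
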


The following proposition will allow us to express the values of $\b(\zeta p^\infty)$ and $\b(\zeta q^\infty)$, when they appear, as the largest real solution to a certain equation.  In Section \ref{sec:B}, we will rearrange the equation and see that the values of $\b(\zeta p^\infty)$ and $\b(\zeta q^\infty)$ may be interpreted as the largest real solution to a polynomial with integer coefficients.  

\begin{prop} \label{prop:bb} If $n - \ell$ is odd, then $z_{[\ell, n-1]} p^\infty$ is the largest subword of $\zeta p^\infty$ and 
 $\b(\zeta p^{\infty})$ is the largest real solution to $f_{z_{[\ell, n-1]}p^\infty}(\beta) = 1$ when $z_{[\ell, n-1]}p^\infty \gess u$, otherwise $\b(\zeta p^\infty) = 1$.   Likewise, if  $n - \ell$ is even and $\pi_n \neq 1$, then $z_{[\ell, n-1]} q^\infty$ is the largest subword of $\zeta q^\infty$ and 
 $\b(\zeta q^\infty)$ is the largest real solution to $f_{z_{[\ell, n-1]}q^\infty}(\beta) = 1$ when $z_{[\ell, n-1]}q^\infty \gess u$, otherwise $\b(\zeta q^\infty) = 1$. 
\end{prop}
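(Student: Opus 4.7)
The plan is to split the proposition into two steps: (i) identify $z_{[\ell, n-1]}p^\infty$ as the largest subword of $\zeta p^\infty$, and (ii) apply Definition~\ref{barb} to extract the formula for $\b(\zeta p^\infty)$. I focus on the case $n-\ell$ odd; the case $n-\ell$ even with $\pi_n\neq 1$ will follow by a verbatim argument with $t^{(m)}$, $q$, and $y$ replacing $s^{(m)}$, $p$, and $x$.

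For (i), I first observe that $\zeta p^\infty$ is eventually periodic of period $|p|$, so the set $\{(\zeta p^\infty)_{[k,\infty)}:k\ge 1\}$ is finite. Since $\less$ is a total order on $\WN$, a maximum subword exists, and hence $\b(\zeta p^\infty)$ is well-defined and finite by Definition~\ref{barb}. I then fix any $\beta>\b(\zeta p^\infty)$ and invoke Lemma~\ref{betainduces}: for every $m\ge (n-1)/2$, $\Pat(s^{(m)},\Sigma_{-},n)=\pi$, so $s^{(m)}_{[\ell,\infty)}\gesseq s^{(m)}_{[k,\infty)}$ for all $1\le k\le n$ (since $\pi_\ell=n$). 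Letting $m\to\infty$, the subword $s^{(m)}_{[k,\infty)}=z_{[k,n-1]}p^{2m}\wend$ converges pointwise to $z_{[k,n-1]}p^\infty$, with $k=n$ giving $p^\infty=z_{[x,n-1]}p^\infty$. The alternating lexicographic order is preserved under such limits: if the limits $v,w$ differ first at position $i$, then for all sufficiently large $m$ the approximants $v^{(m)},w^{(m)}$ also differ first at position $i$ with the same digits, so $v^{(m)}\gesseq w^{(m)}$ forces $v\gesseq w$. This yields $z_{[\ell,n-1]}p^\infty\gesseq z_{[k,n-1]}p^\infty$ for $1\le k\le n-1$. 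Finally, any subword $(\zeta p^\infty)_{[k,\infty)}$ with $k\ge n$ is a cyclic shift of $p^\infty$ and therefore coincides with $z_{[k',n-1]}p^\infty$ for some $k'\in\{x,\ldots,n-1\}$, so it is already dominated by $z_{[\ell,n-1]}p^\infty$.

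Part (ii) is then immediate from Definition~\ref{barb} applied with $l=\ell$: when $z_{[\ell,n-1]}p^\infty\gess u$, $\b(\zeta p^\infty)$ equals the largest real solution of $f_{z_{[\ell,n-1]}p^\infty}(x)=1$, and when $z_{[\ell,n-1]}p^\infty\lesseq u$ every subword of $\zeta p^\infty$ lies $\lesseq u$ and $\b(\zeta p^\infty)=1$. The main obstacle in this plan is the limit step, since the alternating lexicographic order is not literally continuous in any natural topology; however, as described above, it reduces to a short first-difference argument that turns a uniform $\gesseq$ inequality for the approximants into the same inequality for their pointwise limits, after which everything else follows directly from the lemmas already established.
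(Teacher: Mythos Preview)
Your proof is correct and follows essentially the same approach as the paper: invoke Lemma~\ref{betainduces} to get that $s^{(m)}$ induces $\pi$, read off $s^{(m)}_{[\ell,\infty)}\gesseq s^{(m)}_{[k,\infty)}$ from $\pi_\ell=n$, pass to $z_{[\ell,n-1]}p^\infty\gesseq z_{[k,n-1]}p^\infty$, observe that every subword of $\zeta p^\infty$ has this form, and then apply Definition~\ref{barb}. Your first-difference limit argument makes explicit the step that the paper simply asserts with ``Therefore, $z_{[\ell, n-1]}p^{\infty} \gesseq z_{[i, n-1]}p^\infty$,'' and your separate treatment of subwords with $k\ge n$ is exactly the paper's remark that all subwords of $\zeta p^\infty$ are of the form $z_{[i,n-1]}p^\infty$ for $1\le i\le n-1$.
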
 

\begin{proof}  We will prove the statement for $\b(\zeta p^\infty)$; the one for $\b(\zeta q^\infty)$ follows similarly.  Since Lemma \ref{betainduces}  implies that $s^{(m)}$ induces $\pi$ for any $m \geq \frac{n-1}{2}$ and $\beta > \b(\zeta p^\infty)$, we have $z_{[\ell, n-1]}p^{2m} \wend \gess z_{[i, n-1]}p^{2m} \wend$ for all $1 \leq i < n$, $i \neq \ell$.   Therefore, $z_{[\ell, n-1]}p^{\infty} \gesseq z_{[i, n-1]}p^\infty$ for all $1 \leq i \leq n$.  
Since all subwords of $\zeta p^\infty$ are of the form $z_{[i, n-1]}p^\infty$ for some $1 \leq i \leq n-1$, we conclude that $z_{[\ell, n-1]}p^\infty$ is the maximal  subword of $\zeta p^\infty$.  By Definition \ref{barb}, $\b(\zeta p^\infty)$ is equal to the largest real solution to $f_{z_{[\ell, n-1]}p^\infty}(\beta) = 1$ in the case that $z_{[\ell, n-1]}p^\infty \gess u$ and is equal to $1$ otherwise.   \end{proof}

In the following propositions, we construct a word $w$ such that $\B(\pi) = \bar{\beta}(w) = \b(w)$.  The constructions depend on features of $\pi$ such as the parity of $n - \ell$ and whether $\pi$ is regular, cornered or collapsed.  The proposition associated to each type of permutation is listed in Table~\ref{tab:cases}.  The last column contains the word $w$ such that $\B(\pi) = \b(w)$ defined by these propositions.

\begin{table}[h]
\begin{tabular}{|c|c|l|c|}
\hline
\multirow{3}{*}{$\pi$ regular} &  $n-\ell$  is odd & Proposition~\ref{prop:regularnlodd} & $w  = \zeta p^\infty$ \\
\cline{2-4}
 &  $n-\ell$ is even and $\pi_n=1$ & Proposition~\ref{prop:regularnone} & $w = \zeta (0 z_{[\ell, n-1]} )^\infty$ \\
\cline{2-4}
 &  $n-\ell$ is even and $\pi_n \neq 1$ & Proposition~\ref{prop:regularnleven} & $w = \zeta q^\infty$ \\
\hline
\multirow{2}{*}{$\pi$ cornered} & $\pi_{n-2}\pi_{n-1}\pi_n = (n{-}1)1n$ & Proposition~\ref{prop:corneredn} & $w = \zeta q^\infty = \zeta ((N{-}2)0)^\infty$ \\
\cline{2-4} & $\pi_{n-2}\pi_{n-1}\pi_{n} = 2n1$ & Proposition~\ref{prop:cornered2} & $w = \zeta p^\infty = \zeta (0(N{-}2))^\infty$ \\
\hline

\multirow{4}{*}{$\pi$ collapsed} & \multirow{2}{*}{$n-\ell$ is odd} & \multirow{2}{*}{Proposition~\ref{prop:collapsednlodd}} &  $w = \zeta^{(k)} (p^{(k)})^\infty$, \\
& & & $\zeta^{(k)}$ minimizing $z_{[\ell, n-1]}^{(k)} p^{(k)}$ \\
\cline{2-4} & \multirow{2}{*}{$n-\ell$ is even} & \multirow{2}{*}{Proposition~\ref{prop:collapsednleven}} & $w = \zeta^{(k)} (q^{(k)})^\infty$, \\
& & & $\zeta^{(k)}$ minimizing $z_{[\ell, n-1]}^{(k)} q^{(k)}$ \\
\hline
\end{tabular}
\caption{The different cases for $\pi$ and the propositions that construct $w$ with $\B(\pi)=\b(w)$ in each case.}
\label{tab:cases}
\end{table}

\begin{prop} \label{prop:regularnlodd}  Let $\pi$ be a regular permutation  such that $n-\ell$ is odd. 
Then
$$\B(\pi) = \b(\zeta p^\infty).$$  \end{prop}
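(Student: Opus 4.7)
The plan is to establish the two inequalities $\B(\pi) \leq \b(\zeta p^\infty)$ and $\B(\pi) \geq \b(\zeta p^\infty)$ separately. The upper bound is essentially immediate from the machinery developed above: for any $\beta > \b(\zeta p^\infty)$, condition~\eqref{eq:smdefined} holds, so Proposition~\ref{prop:pat} produces an $m_0$ such that $s^{(m_0)} \in \Wb$ and $\Pat(s^{(m_0)}, \Sigma_{-\beta}, n) = \pi$. Hence $\pi \in \Al(\Sigma_{-\beta})$ for every such $\beta$, and Definition~\ref{def:Bpi} gives $\B(\pi) \leq \b(\zeta p^\infty)$.

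For the lower bound I would take an arbitrary word $v \in \Wb$ with $\Pat(v, \Sigma_{-\beta}, n) = \pi$ for some $\beta > 1$, and prove $\bar\beta(v) \geq \b(\zeta p^\infty)$. Since $\pi$ is regular, Lemma~\ref{beginszeta} together with the uniqueness clause of Lemma~\ref{numberseg} forces $v_{[1, n-1]} = \zeta$. Because $v$ realizes $\pi$ and $\pi_x = \pi_n + 1$, we have $v_{[n, \infty)} \less v_{[x, \infty)} = p\, v_{[n, \infty)}$, and Lemma~\ref{dinfinity} upgrades this to $v_{[n, \infty)} \less p^\infty$. Prepending the prefix $z_{[\ell, n-1]}$, whose length $n - \ell$ is odd, flips the alternating-lex inequality and produces
$$v_{[\ell, \infty)} = z_{[\ell, n-1]} v_{[n, \infty)} \gess z_{[\ell, n-1]} p^\infty.$$

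By Proposition~\ref{prop:bb}, $z_{[\ell, n-1]} p^\infty$ is the largest subword of $\zeta p^\infty$, so the second part of Lemma~\ref{betab} (applied with its $v$ set to $\zeta p^\infty$ and its $w$ set to our $v$) gives $\bar\beta(v) \geq \bar\beta(\zeta p^\infty)$. Lemma~\ref{betcor} combined with Proposition~\ref{prop:bb} shows that $\bar\beta(\zeta p^\infty) = \b(\zeta p^\infty)$, so taking the infimum over all such $v$ yields $\B(\pi) \geq \b(\zeta p^\infty)$.

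The only non-routine step is the parity flip at the heart of the lower bound: because $n - \ell$ is odd, the natural constraint $v_{[n, \infty)} \less p^\infty$ at position $n$ translates into the reverse inequality, a \emph{dominance} of $v_{[\ell, \infty)}$ over $z_{[\ell, n-1]} p^\infty$, at position $\ell$. This sign change is exactly why $s^{(m)}$ was defined with an \emph{even} power $p^{2m}$, so that the approximating words and any other realization of $\pi$ satisfy matching alternating-lex relations. Once the parity bookkeeping is pinned down, both inequalities reduce to citations of results already in place.
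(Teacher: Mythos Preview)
Your proof is correct and follows essentially the same approach as the paper's own proof: both halves invoke exactly the same results (Proposition~\ref{prop:pat} for the upper bound; Lemma~\ref{dinfinity}, Proposition~\ref{prop:bb}, Lemma~\ref{betab}, and Lemma~\ref{betcor} for the lower bound), and the parity flip from $n-\ell$ odd is the same key observation. Your version is slightly more explicit in justifying $v_{[1,n-1]}=\zeta$ via Lemma~\ref{beginszeta} and the uniqueness clause of Lemma~\ref{numberseg}, which the paper leaves implicit.
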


\begin{proof} To show $\B(\pi) \leq \b(\zeta p^\infty)$, let $\beta > \b(\zeta p^\infty)$. By Proposition \ref{prop:pat}, there exists an $m_0 \geq \frac{n-1}{2}$ such that for all $m \geq m_0$, we have $s^{(m)}\in\Wb$ and $\Pat(s^{(m)}, \Sigma_{-\beta}, n ) = \pi$.  Thus, $\pi \in \Al(\Sigma_{-\beta})$ and $\B(\pi) \leq\beta$. 

To show $\B(\pi) \geq \b(\zeta p^\infty)$, let $w = \zeta w_{[n, \infty)} \in \mathcal{W}_N$ be a word inducing $\pi$.  Then we have $w_{[n, \infty)} \less w_{[x, \infty)}=pw_{[n, \infty)}$, and so, by Lemma \ref{dinfinity}, $w_{[n, \infty)} \less p^\infty$.  Since $n-\ell$ is odd, we obtain $w_{[\ell, \infty)} = z_{[\ell, n-1]} w_{[n, \infty)} \gess z_{[\ell, n-1]} p^\infty$. Moreover, Lemma \ref{betab} implies that $\bar{\beta}(w) \ge \bar{\beta}(\zeta p^\infty)$ because Proposition \ref{prop:bb} states that $z_{[\ell, n-1]} p^\infty$ is the largest subword of $\zeta p^\infty$.  Since Proposition \ref{betcor} tells us that $\bar{\beta}(\zeta p^\infty) = \b(\zeta p^\infty)$, we may now conclude that $\B(\pi) \geq \b(\zeta p^\infty)$. 
\end{proof}

\begin{example}  Let $\pi = 516324$, a regular permutation such that $n- \ell=6-3$ is odd.  Then $\hat{\pi} = 642\star13$, $\asc(\hat{\pi}) = 1$ and $\N(\pi) =2$.  A $-2$-segmentation of $\hat{\pi}$ is given by $(e_0, e_1, e_2) = (0, 5, 6)$, defining the prefix $\zeta = 00100$.  Since $h - x = \ell - x=3-1$ is even and $p = 00100$ has odd length, we use (\ref{eq:smdef}.2) to obtain $\wend = p z_{[x, h-1]} \wmax = p 00  \wmax$.  By Proposition \ref{prop:regularnlodd}, $\B(\pi) = \b(\zeta p^\infty) \approx 1.466$, where the value of $\b(\zeta p^\infty)$ is determined by Theorem \ref{thm:P}. Hence,  by Lemma \ref{betainduces}, $s^{(m)} = 00100(00100)^{2m+1}00 \wmax$ induces $\pi$ for all $m \geq 3$ and $\beta > \b(\zeta p^\infty)$.  Moreover, Lemma \ref{lem:inWb} guarantees that given such $\beta$ there exists an $m_0 \geq 3$ such that, for any $m \geq m_0$, we have $s^{(m)} \in \Wb$ and $\Pat(s^{(m)}, \Sigma_{-\beta}, n) = \pi$.  \end{example}

\begin{prop} \label{prop:regularnone} Let $\pi$ be a regular permutation such that $n-\ell$ is even and $\pi_n = 1$. 
Suppose that $\beta > \b(\zeta 0(z_{[\ell, n-1]})^\infty)$, and let
$$w = \zeta \wmin.$$
Then $w \in \Wb$, $w$ induces $\pi$, and 
$$\B(\pi) = \b(\zeta (0 z_{[\ell, n-1]})^\infty).$$
\end{prop}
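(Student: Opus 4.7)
The proof divides into three parts: establishing $w = \zeta\wmin \in \Wb$, showing that $w$ induces $\pi$, and computing $\B(\pi)$.

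For $w \in \Wb$, the shifts $w_{[i, \infty)}$ at positions $i \geq n+1$ are tails of $\wmax$ and hence $\lesseq \wmax$ by Equation~\eqref{eq:wmaxsubwords}, while $w_{[n, \infty)} = \wmin \lesseq \wmax$. For $1 \leq i < n$, $w_{[i, \infty)} = z_{[i, n-1]}\, 0\, \wmax$, and Lemma~\ref{dinfinity} with $d = z_{[i, n-1]}\, 0$ converts the required inequality $w_{[i, \infty)} \lesseq \wmax$ into $(z_{[i, n-1]} 0)^\infty \lesseq \wmax$. Applying Lemma~\ref{largeenough} to $w' := \zeta(0 z_{[\ell, n-1]})^\infty$, the hypothesis $\beta > \b(w')$ makes every subword of $w'$ lie strictly between $\wmin$ and $\wmax$; in particular $w'_{[n+1, \infty)} = (z_{[\ell, n-1]} 0)^\infty \less \wmax$ settles the case $i = \ell$. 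For $i \neq \ell$, I would use iterated applications of Lemma~\ref{flipping} starting from $\pi_i < \pi_\ell$, together with a careful parity analysis, to deduce $(z_{[i, n-1]} 0)^\infty \lesseq \wmax$.

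Next I would show $w$ induces $\pi$ via the $w_{[x, \infty)}$ form of Lemma~\ref{pattern} (the $w_{[y, \infty)}$ form is unavailable because $y$ is undefined when $\pi_n = 1$, but $x$ exists since $\pi_n \neq n$). The first hypothesis $w_{[x, \infty)} = p\wmin \gess \wmin = w_{[n, \infty)}$ follows from $w_{[x, \infty)} \in \Wb$ (hence $\gesseq \wmin$) together with $w_{[x, \infty)} \neq \wmin$: the prefix $z_{[x, n-1]}\, 0$ has length at least two while $\wmin = 0\wmax$ begins with a single $0$ followed by $\wmax_1 \geq 1$, and a short case analysis excludes exceptional alignments with the periodic structure of $\wmax$. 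For the second hypothesis, the absence of a subword $w_{[c, \infty)}$ strictly between $w_{[n, \infty)}$ and $w_{[x, \infty)}$, I would adapt the proof of Lemma~\ref{nothingbetween}: any such $w_{[c, \infty)}$ would force a repetition within $\zeta$ contradicting either the primitivity statement of Lemma~\ref{d2} or the validity of the minimal $-N$-segmentation of $\hat\pi$.

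The upper bound $\B(\pi) \leq \b(\zeta(0 z_{[\ell, n-1]})^\infty)$ is then immediate. For the matching lower bound, let $v$ be any word inducing $\pi$; by Lemma~\ref{beginszeta}, $v_{[1, n-1]} = \zeta$ and $v_{[\ell, \infty)} = z_{[\ell, n-1]} v_{[n, \infty)}$. I would first verify (by an argument parallel to Proposition~\ref{prop:bb}, using $z_\ell = N - 1$) that $(z_{[\ell, n-1]} 0)^\infty$ is the maximum subword of $w'$, so $\b(w')$ equals the largest real root of $f_{(z_{[\ell, n-1]} 0)^\infty}(x) = 1$. To apply Lemma~\ref{betab}, it suffices to exhibit a subword of $v$ that is $\gesseq (z_{[\ell, n-1]} 0)^\infty$. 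Comparing $v_{[n, \infty)}$ with $(0 z_{[\ell, n-1]})^\infty$: if $v_{[n, \infty)} \gesseq (0 z_{[\ell, n-1]})^\infty$ then canceling the even-length prefix $z_{[\ell, n-1]}$ gives $v_{[\ell, \infty)} \gesseq (z_{[\ell, n-1]} 0)^\infty$; otherwise $v_{[n, \infty)} \less (0 z_{[\ell, n-1]})^\infty$ forces $v_n = 0$, and canceling the leading $0$ (odd position, reversing direction) yields $v_{[n+1, \infty)} \gess (z_{[\ell, n-1]} 0)^\infty$. In either case Lemma~\ref{betab} gives $\bar\beta(v) \geq \b(w')$, so Definition~\ref{def:Bpi} yields the lower bound. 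I expect the main obstacles to lie in the second hypothesis of Lemma~\ref{pattern} and in the maximality claim for the subword $(z_{[\ell, n-1]} 0)^\infty$ of $w'$, both requiring careful case analysis mirroring Lemmas~\ref{nothingbetween} and~\ref{d2}.
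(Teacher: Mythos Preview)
Your proposal has a genuine gap in the step showing that $w=\zeta\wmin$ induces $\pi$. You plan to invoke Lemma~\ref{pattern} in its $x$-form, which requires verifying that no $1\le c\le n$ satisfies $w_{[n,\infty)}\less w_{[c,\infty)}\less w_{[x,\infty)}$. You say you would ``adapt the proof of Lemma~\ref{nothingbetween}'', but that proof hinges on the specific structure of $s=\zeta p^{n-2}\cdots$: the many consecutive copies of $p$ force any candidate $s_{[c,\infty)}$ to begin with $p^{n-2}$ and then align with the periodic block, leading to a contradiction via primitivity. The word $w=\zeta\wmin$ contains \emph{no} such block of repeated $p$'s after $\zeta$, so that alignment argument simply does not transfer. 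Concretely, you would need to show that for every $c\neq n,x$ one has $z_{[c,n-1]}\wmin\gesseq z_{[x,n-1]}\wmin$; this is equivalent to knowing that $w_{[x,\infty)}$ is the second smallest among the first $n$ shifts, which is essentially the statement you are trying to prove.

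The paper sidesteps this entirely by \emph{not} invoking Lemma~\ref{pattern}. Instead it reproves the $S(i,j)$ reduction argument directly, exploiting the special feature $\pi_n=1$: the endpoint cases are trivial (there is never any $\pi_i<\pi_n$, and once one shows no $c<n$ has $w_{[c,\infty)}\lesseq\wmin$, the case $i=n$ is immediate), and every reduction chain reaches an endpoint in one step because the alternatives ``$\pi_{j+m'}<\pi_n=1$'' are impossible. This is the key idea you are missing.

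Two smaller points. First, your order of proof makes life harder: the paper proves ``$w$ induces $\pi$'' \emph{before} ``$w\in\Wb$'', so that $w\in\Wb$ follows in one line from $w_{[i,\infty)}\lesseq w_{[\ell,\infty)}=z_{[\ell,n-1]}\wmin\less\wmax$. Your attempt to prove $w\in\Wb$ first forces a separate argument for each $i\neq\ell$. Second, for the lower bound the paper's route is more direct than your case split: from $v\in\Wb$ one has $v_{[n,\infty)}\gesseq\wmin$ automatically, whence (using that $n-\ell$ is even) $\wmax\gesseq v_{[\ell,\infty)}\gesseq z_{[\ell,n-1]}\wmin=z_{[\ell,n-1]}0\,\wmax$, and Lemma~\ref{dinfinity} gives $\wmax\gesseq(z_{[\ell,n-1]}0)^\infty$ without any comparison of $v_{[n,\infty)}$ to a periodic word.
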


\begin{proof}  We will show that $w \in \Wb$, $\Pat(w, \Sigma_{-\beta}, n) = \pi$ and $\B(\pi) = \b(\zeta(0z_{[\ell, n-1]})^\infty)$.  We begin by noting that Lemma \ref{largeenough} applied to $\zeta (0 z_{[\ell, n-1]})^\infty$ for $k = \ell$ implies that $\wmin \less (z_{[\ell, n-1]}0)^\infty \less \wmax$, and Lemma \ref{dinfinity} gives 
\begin{equation} \label{ellnone} z_{[\ell, n-1]} 0 \wmax = z_{[\ell, n-1]} \wmin \less \wmax.\end{equation}  

We will first show that $\Pat(w, \Sigma_{-}, n) = \pi$.  As in Section \ref{sec:-N}, this will follow by showing that 
\begin{enumerate}[a)]
\item there is no $1 \leq c < n$ such that $w_{[c, \infty)} \less w_{[n, \infty)} = \wmin$,
\item $\Pat(w, \Sigma_{-}, n)$ is defined, and
\item $w$ induces $\pi$, which we do by following the proof of Lemma \ref{pattern}.  
\end{enumerate}

Notice that if there were an index $1 \leq c < n$ such that $w_{[c, \infty)} \lesseq \wmin$, then $w_c = 0$ and $w_{[c+ 1, \infty)} \gesseq \wmax$.  To prove a), suppose for a contradiction that $w_{[c, \infty)} \gesseq \wmax$ for some $2 \leq c \leq n$.  By Equation~\eqref{ellnone}, it follows that $c \neq \ell$ and also that $z_{[c, n-1]} \wmin \gesseq z_{[\ell, n-1]} \wmin$.   If $c < \ell$, then Lemma \ref{flipping} implies that $z_{[c, c + n - \ell - 1]} \lesseq z_{[\ell, n-1]}$, hence $z_{[c, c + n - \ell -1]} = z_{[\ell, n-1]}$.  Applying Lemma \ref{flipping} $n-\ell$ times to $\pi_{\ell} > \pi_{c}$ gives $1 = \pi_n > \pi_{c + n - \ell}$, a contradiction.  If $c > \ell$, then Lemma \ref{flipping} implies that $z_{[c, n-1]} \lesseq z_{[\ell, \ell + n - c -1]}$, thus $z_{[c, n-1]} = z_{[\ell, \ell + n - c -1]}$.   Since $\pi_{\ell} > \pi_{\ell + n -c}$, applying Lemma \ref{flipping} $n - c$ times implies that $n - c$ is even, since otherwise we would obtain $\pi_{\ell + n - c} < \pi_{n} =1$, a contradiction.  Hence, if such an index $c$ exists, we must have $c > \ell$, $n - c$ is even and $z_{[c, n-1]} = z_{[\ell, \ell + n - c - 1]}$.  But then it would follow that $w_{[c, \infty)} = z_{[c, n-1]} \wmin \lesseq z_{[\ell, n-1]} \wmin = w_{[\ell, \infty)}$, a contradiction to the fact that $w_{[c, \infty)} \gesseq \wmax \gess w_{[\ell, \infty)}$.

To prove b), note that for $i, j \leq n$, the equality $w_{[i, \infty)} = w_{[j, \infty)}$ together with a) implies that these two words have the first instance of $\wmin$ appearing at the same position.  By a), there is no $1 \leq c < n$ such that $w_{[c, \infty)} = \wmin$, which forces $i = j$ in the previous equality.   Therefore, $\Pat(w, \Sigma_{-}, n)$ is defined.

Finally, to prove c), for $1 \leq i, j \leq n$, let $S(i, j)$ be the statement 
$$\pi_i < \pi_j \text{ implies }  w_{[i, \infty)} \less w_{[j, \infty)}.$$
To show $\Pat(w, \Sigma_{-}, n) = \pi$, we will prove $S(i, j)$ for all $1 \leq i, j \leq n$, with $i \neq j$.   We begin by verifying the endpoint cases, which we will often reduce to in the remainder of the proof.  If $j = n$, the statement $S(i, n)$ holds because we never have $\pi_i < \pi_n = 1$.  If $i = n$, the statement $S(n, j)$ holds because part a) implies there is no $1 \leq c \leq n$ such that $w_{[c, \infty)} \less w_{[n, \infty)}$.

We are left with the case when $1 \leq i, j < n$.  Suppose that $\pi_i < \pi_j$.  Let $m$ be such that $w_{[i, i + m - 1]} = w_{[j, j + m-1]}$ and $w_{i + m} \neq w_{j + m}$.  First assume that $i + m, j + m \leq n-1$.  If $m$ is even, then Lemma \ref{flipping} applied $m$ times to $\pi_i < \pi_j$ implies that $\pi_{i + m} < \pi_{j + m}$.  By Lemma \ref{flipping}, we must have $w_{i + m} \leq w_{j + m}$, thus $w_{i + m} < w_{j + m}$.  Therefore, $w_{[i + m, \infty)} \less w_{[j + m, \infty)}$ and  we obtain $w_{[i, \infty)} \less w_{[j, \infty)}$.  Similarly, if $m$ is odd, then Lemma \ref{flipping} applied $m$ times to $\pi_i < \pi_j$ implies that $\pi_{i + m} > \pi_{j + m}$.  Hence, by Lemma \ref{flipping}, we must have $w_{i + m} > w_{j + m}$ because $w_{i + m} \neq w_{j + m}$.  Therefore, $w_{[i + m, \infty)} \gess w_{[j + m, \infty)}$, and thus $w_{[i, \infty)} \less w_{[j, \infty)}$.  This shows that whenever $i + m, j + m \leq n -1$, the statement $S(i, j)$ holds.

Suppose now that $i + m \geq n$ or $j + m \geq n$ and let $m'$ be the minimal index such that either $i + m' = n$ or $j + m' = n$. 
\begin{itemize}
\item  If $i + m' = n$ and $m'$ is even, then $w_{[i, i + m' - 1]} = w_{[j, j + m' -1]}$ and Lemma \ref{flipping} implies that $\pi_n = \pi_{i + m'} < \pi_{j + m'}$.  In the first paragraph of the proof of part c), we have shown that $S(n, j + m')$ holds.  Thus, $w_{[n, \infty)} \less w_{[j + m', \infty)}$ and we obtain $w_{[i, \infty)} \less w_{[j, \infty)}$.  We conclude that $S(i, j)$ holds.  

\item If $i + m' = n$ and $m'$ is odd, then $w_{[i, i + m' - 1]} = w_{[j, j + m' - 1]}$ and Lemma \ref{flipping}  implies that $1 =\pi_{n} = \pi_{i + m'} > \pi_{j + m'}$, a contradiction. 

\item  If $j + m' = n$ and $m'$ is even, and $w_{[i, i + m' - 1]} = w_{[j, j + m' -1]}$and Lemma \ref{flipping} implies that $\pi_{i + m'} < \pi_{j + m'} = \pi_n = 1$, a contradiction.  

\item If $j + m' = n$ and $m'$ is odd, then  $w_{[i, i + m'-1]} = w_{[j, j + m' -1]}$ and Lemma \ref{flipping} implies that $\pi_{i + m'} > \pi_{j + m'} = \pi_{n} = 1$.  In the first paragraph of the proof of part c), we verified that $S(i + m', n)$ holds.  Thus, $w_{[i + m', \infty)} \gess w_{[n, \infty)}$ and we obtain $w_{[i, \infty)} \less w_{[j, \infty)}$.   We conclude that $S(i, j)$ holds.  
\end{itemize}

We have shown that $S(i,j)$ holds in all cases, hence $\Pat(w, \Sigma_{-}, n) = \pi$.  Moreover, $w \in \Wb$ because $w_{[i, \infty)} \lesseq w_{[\ell, \infty)}$ for all $1 \leq i < n$ and $w_{[\ell, \infty)} = z_{[\ell, n-1]} \wmin \less \wmax$ by Equation~\eqref{ellnone}. 

The above argument shows that for any $\beta > \b(\zeta(0z_{[\ell, n-1]})^\infty)$, we have $w = \zeta \wmin \in \Wb$ and $\Pat(w, \Sigma_{-\beta}, n) = \pi$. It follows that $\B(\pi) \leq \b(\zeta(0z_{[\ell, n-1]})^\infty)$

Finally, to show that $\B(\pi) \geq \b(\zeta(0z_{[\ell, n-1]})^\infty)$, let $ \b(\zeta(0z_{[\ell, n-1]})^\infty) > \beta > 1$.  We will show that no word in $\Wb$ induces $\pi$.  Suppose for contradiction that $v \in \Wb$ induces $\pi$. Lemma \ref{beginszeta}  implies that $v$ begins with $\zeta$, that is, $v= \zeta v_{[n, \infty)}$.  By Lemma \ref{lem:Wb}, we have $\wmax \gesseq v_{[\ell, \infty)}$ and $ v_{[n, \infty)} \gesseq \wmin$.  The fact that  $n - \ell$ is even implies that $v_{[\ell, \infty)} = z_{[\ell, n-1]} v_{[n, \infty)} \gesseq z_{[\ell, n-1]} \wmin$.  Therefore, $\wmax \gesseq z_{[\ell, n-1]} 0 \wmax$, and by Lemma~\ref{dinfinity}, $\wmax \gesseq (z_{[\ell, n-1]} 0)^\infty$.  It follows that $\beta \geq \bar{\beta}(\zeta(0 z_{[\ell, n-1]})^\infty) = \b(\zeta (0 z_{[\ell, n-1]})^\infty)$, where the equality comes from Lemma~\ref{betcor},  a contradiction to our choice of $\beta$.  It follows that  $\B(\pi) \ge \b(\zeta(0 z_{[\ell, n-1]})^\infty)$.  
\end{proof}

\begin{example} \label{nlevenone} Let $\pi = 6435721$, a regular permutation such that $n-\ell$ is even and $\pi_n = 1$. Then $\hat{\pi} = {\star} 153742$,  $\asc(\hat{\pi}) = 2$ and $\N(\pi) = 3$.  A $-3$-segmentation of $\hat{\pi}$ is given by $(e_0, e_1, e_2, e_3) = (0, 2, 4, 7)$, defining the prefix $\zeta = 211220$.  Therefore, by Proposition \ref{prop:regularnleven}, $w = 211220 \wmin$ and $\Pat(w, \Sigma_{-\beta}, n) = \pi$ for all $\beta > \b(\zeta (0z_{[\ell, n-1]})^\infty) = \frac{3 + \sqrt{5}}{2} \approx 2.618$. \end{example}

\begin{prop} \label{prop:regularnleven}
Let $\pi$ be a regular permutation such that $n-\ell$ is even and $\pi_n \neq 1$. 
Then
$$\B(\pi) = \b(\zeta q^\infty).$$ 
   \end{prop}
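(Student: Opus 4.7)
The proof will closely parallel that of Proposition~\ref{prop:regularnlodd}, with the roles of $s^{(m)}$, $p$, and $x$ replaced by $t^{(m)}$, $q$, and $y$, and with the parity of $n-\ell$ changing how canceling the prefix $z_{[\ell,n-1]}$ interacts with $\less$. The plan is to prove both inequalities $\B(\pi) \le \b(\zeta q^\infty)$ and $\B(\pi) \ge \b(\zeta q^\infty)$.

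For the upper bound, I will fix $\beta > \b(\zeta q^\infty)$. Since $\pi$ is regular and $n-\ell$ is even with $\pi_n \neq 1$, condition~\eqref{eq:tmdefined} holds. Proposition~\ref{prop:pat} then produces an integer $m_0$ such that $t^{(m)} \in \Wb$ and $\Pat(t^{(m)}, \Sigma_{-\beta}, n) = \pi$ for every $m \ge m_0$. Hence $\pi \in \Al(\Sigma_{-\beta})$, so $\B(\pi) \le \beta$; taking the infimum over such $\beta$ gives $\B(\pi) \le \b(\zeta q^\infty)$.

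For the lower bound, let $w = \zeta w_{[n,\infty)} \in \mathcal{W}_N$ be any word inducing $\pi$, and let me extract an inequality involving $q^\infty$. Because $\pi_y = \pi_n - 1 < \pi_n$, inducing $\pi$ forces $w_{[y,\infty)} \less w_{[n,\infty)}$, that is, $q\, w_{[n,\infty)} \less w_{[n,\infty)}$. By Lemma~\ref{dinfinity} this implies $q^\infty \less w_{[n,\infty)}$. Since $n-\ell$ is even, prepending the even-length word $z_{[\ell,n-1]}$ preserves the direction of $\less$, giving
\[
w_{[\ell,\infty)} \;=\; z_{[\ell,n-1]}\, w_{[n,\infty)} \;\gess\; z_{[\ell,n-1]}\, q^\infty.
\]
By Proposition~\ref{prop:bb}, the word $z_{[\ell,n-1]}q^\infty$ is the largest subword of $\zeta q^\infty$, so Lemma~\ref{betab} yields $\bar\beta(w) \ge \bar\beta(\zeta q^\infty)$. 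Combining with Lemma~\ref{betcor}, which identifies $\bar\beta(\zeta q^\infty)$ with $\b(\zeta q^\infty)$, and taking the infimum over all words $w$ realizing $\pi$ per Definition~\ref{def:Bpi}, we conclude $\B(\pi) \ge \b(\zeta q^\infty)$.

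There is no real obstacle beyond checking the parity bookkeeping; the key point is that when $n-\ell$ is even, the cancellation of the prefix $z_{[\ell,n-1]}$ does not flip the alternating-lex inequality, so the extremal subword $w_{[\ell,\infty)}$ is bounded below (rather than above) by $z_{[\ell,n-1]}q^\infty$, which is exactly the direction needed to invoke Lemma~\ref{betab}. Combining the two inequalities completes the proof.
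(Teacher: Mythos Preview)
Your proof is correct and follows exactly the approach the paper intends: the paper's own proof of this proposition consists only of the sentence ``The proof follows the same argument as Proposition~\ref{prop:regularnlodd},'' and you have faithfully carried out that adaptation, replacing $(s^{(m)},p,x)$ by $(t^{(m)},q,y)$ and correctly tracking that the even length of $z_{[\ell,n-1]}$ preserves (rather than flips) the alternating-lexicographic inequality. The invocation of Proposition~\ref{prop:pat}, Lemma~\ref{dinfinity}, Proposition~\ref{prop:bb}, Lemma~\ref{betab}, and Lemma~\ref{betcor} matches the paper's argument in Proposition~\ref{prop:regularnlodd} step for step.
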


\begin{proof} The proof follows the same argument as Proposition \ref{prop:regularnlodd}. \end{proof}

\begin{example} \label{nleven} 
Let $\pi = 15237864$, a regular permutation with $n - \ell$ even and $\pi_n \neq 1$.  Then $\hat{\pi} = 537\star2486$, $\asc(\hat{\pi}) = 3$ and $\N(\pi) = 4$.  A $-4$-segmentation of $\hat{\pi}$ is given by $(e_0, e_1, e_2, e_3, e_4) = (0, 2, 5, 6, 8)$, defining the prefix $\zeta = 0101332$.  Since $|q|$ is even, we use (\ref{eq:tmdef}.3) with $q =  1332$ and $\wend = z_{[y, h-1]}\wmax = 1 \wmax$.  From this, by Lemma \ref{betainduces} we obtain $t^{(m)} = 0101332(1332)^{2m} 1 \wmax$, and $t^{(m)}$ induces $\pi$ for all $\beta > \b(\zeta q^\infty)$ and $m \geq 4$. Moreover, by Lemma \ref{lem:inWb}, given a $\beta > \b(\zeta q^\infty) \approx 3.1544$, there exists an $m_0 \geq 4$ such that for any $m \geq m_0$, we have $t^{(m)} \in \Wb$ and $\Pat(t^{(m)}, \Sigma_{-\beta}, n) = \pi$. 
\end{example}
 
For the next to propositions, recall that we defined $N=\N(\pi)$. 

\begin{prop} \label{prop:corneredn} Let $\pi$ be a cornered permutation such that  $\pi_{n-2} \pi_{n-1} \pi_{n} = (n{-}1) 1 n$.  
Then
$$\B(\pi) = \b(\zeta q^\infty) = N-1,$$
where $\zeta$ is the unique prefix defined by a $-N$-segmentation with $e_{N-1} \geq n-1$.  
\end{prop}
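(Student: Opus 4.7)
The plan is to prove the two equalities $\B(\pi) = N-1$ and $\b(\zeta q^\infty) = N-1$ separately, starting with the latter (a short geometric-series computation), then using Proposition~\ref{prop:pat} for the upper bound $\B(\pi) \leq N-1$ and an alphabet-size argument for the lower bound $\B(\pi) \geq N-1$.

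For $\b(\zeta q^\infty) = N-1$, I would first note that the hypothesis $\pi_{n-2}\pi_{n-1}\pi_n = (n{-}1)1n$ forces $\ell = h = n$ and $y = n-2$, and that the discussion following Lemma~\ref{numberseg} identifies the selected prefix (with $e_{N-1} \geq n-1$) as having $q = z_{n-2}z_{n-1} = (N{-}2)0$. Since $n - \ell = 0$ is even and $\pi_n \neq 1$, Proposition~\ref{prop:bb} applies, and the largest subword of $\zeta q^\infty$ is $z_{[\ell, n-1]}q^\infty = q^\infty$ (the prefix $z_{[\ell, n-1]}$ being empty). A direct geometric-series summation yields
\[
f_{q^\infty}(\beta) = \frac{(N-1)\beta - 1}{\beta^2 - 1},
\]
and $f_{q^\infty}(\beta) = 1$ reduces to $\beta(\beta - (N-1)) = 0$, whose only root exceeding $1$ is $\beta = N-1$. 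This is valid provided $q^\infty \gess u$, which is confirmed for $N \geq 3$ by comparing the first few letters; for $N = 2$ one has $q^\infty = 0^\infty \less u$ by Equation~\eqref{eq:u-ab}, and Definition~\ref{barb} gives $\b(q^\infty) = 1 = N-1$ directly.

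For the upper bound $\B(\pi) \leq N-1$, I would invoke Proposition~\ref{prop:pat}. With $\pi_n \neq 1$ and $n-\ell$ even, condition~\eqref{eq:tmdefined} becomes simply $\beta > \b(\zeta q^\infty) = N-1$, so for any such $\beta$ the proposition furnishes an $m_0$ for which $t^{(m)} \in \Wb$ and $\Pat(t^{(m)}, \Sigma_{-\beta}, n) = \pi$ for all $m \geq m_0$ (here $\wend = q\,\Omega_\beta$ by case~(\ref{eq:tmdef}.3), since $h-y=|q|=2$ are both even). Hence $\pi \in \Al(\Sigma_{-\beta})$ for all $\beta > N-1$, giving $\B(\pi) \leq N-1$.

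For the lower bound $\B(\pi) \geq N-1$, suppose $1 < \beta < N-1$ and put $M = \lfloor \beta \rfloor + 1 \leq N-1$. Then $\Wb \subseteq \mathcal{W}_M \subseteq \mathcal{W}_{N-1}$, and because $\Sigma_{-\beta}$ restricted to $\Wb$ is literally the shift map on words and so coincides with $\Sigma_{-(N-1)}$ restricted to $\Wb$, we have $\Al(\Sigma_{-\beta}) \subseteq \Al(\Sigma_{-(N-1)})$. Since $\N(\pi) = N$ forces $\pi \notin \Al(\Sigma_{-(N-1)})$, we conclude $\pi \notin \Al(\Sigma_{-\beta})$, and hence $\B(\pi) \geq N-1$. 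The main thing to verify carefully (and the only potential obstacle) is that Proposition~\ref{prop:pat} applies verbatim to this cornered case rather than implicitly assuming $\pi$ is regular; inspecting the construction of $t^{(m)}$ in~\eqref{eq:tmdef} and the proofs of Lemmas~\ref{betainduces} and~\ref{lem:inWb} shows that they require only $\pi_n \neq 1$ and $n - \ell$ even, both of which hold here.
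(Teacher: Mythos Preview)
Your proposal is correct and follows essentially the same approach as the paper: identify $q=(N{-}2)0$ from the discussion after Lemma~\ref{numberseg}, compute $\b(\zeta q^\infty)=N-1$ by solving $f_{q^\infty}(\beta)=1$, obtain the upper bound $\B(\pi)\le N-1$ from Proposition~\ref{prop:pat}, and obtain the lower bound from the fact that $\N(\pi)=N$ forbids any word in $\mathcal{W}_{N-1}$ from inducing $\pi$. Your treatment is slightly more explicit than the paper's (you separate the $N=2$ edge case and spell out the inclusion $\Wb\subseteq\mathcal{W}_{N-1}$), but the argument is the same.
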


\begin{proof}  
First let us calculate $\b(\zeta q^\infty)$.  Since $\pi$ is a cornered permutation, by the observation directly following Lemma \ref{numberseg}, the unique prefix $\zeta$ associated to a $-N$-segmentation of $\hat{\pi}$ with $e_{N-1} \geq n-1$ satisfies $q = (N{-}2)0$, and $\zeta \in \{0, 1, \dots, N{-} 2 \}^{n-1}$.  By Lemma \ref{lem:unique_largest}, we may calculate $\b(\zeta q^\infty)$ directly by finding largest real solution to $f_{((N{-}2)0)^\infty)}(x) = 1$ and obtain $\b(\zeta q^\infty) = N-1$.  

By Theorem \ref{negativeshift}, there is no word $w \in \mathcal{W}_{N{-}1}$ inducing $\pi$.  Hence $\B(\pi) \geq {N{-}1} = \b(\zeta q^
\infty)$.  

To show $\B(\pi) \leq \b(\zeta q^\infty)$, let $\beta > \b(\zeta q^\infty)$.   By Proposition \ref{prop:pat}, there exists an $m_0 \geq \frac{n-1}{2}$ such that for all $m \geq m_0$,  we have $t^{(m)} \in \Wb$ and $\Pat(t^{(m)}, \Sigma_{-}, n) = \pi$.
 \end{proof}

\begin{example} \label{infismin} Let $\pi = 23654718$, a cornered permutation satisfying $\pi_{n-2} \pi_{n-1} \pi_{n} = (n-1)1 n$.  Then $\hat{\pi} = 8367451{\star}$, $\asc(\hat{\pi}) = 3$ and $\N(\pi) = 5$.  A valid $-5$ segmentation of $\hat{\pi}$ such that $e_{4} \geq 7$ is given by $(e_0, e_1, e_2, e_3, e_4, e_5) = (0, 2, 3, 5, 8, 8)$, defining the prefix $\zeta = 0132230$ with $q = 30$.  By Proposition \ref{prop:corneredn}, $\B(\pi) = 4$.  Moreover, by Lemmas~\ref{betainduces} and~\ref{lem:inWb}, $t^{(m)} = 0132230(30)^{2m} \wmax\in \Wb$ and $\Pat(t^{(m)}, \Sigma_{-\beta}, n) = \pi$ for all $\beta > 4$ and $m \geq m_0 = 4$.  In this example one can take $m_0 = 4$ because, by inspection, $t^{(m)} \in \mathcal{W}_{4} \subseteq \Wb$. 
\end{example}

\begin{prop} \label{prop:cornered2} Let $\pi$ be a cornered permutation such that $\pi_{n-2} \pi_{n-1} \pi_{n} = 2n1$.  
Then
$$\B(\pi) = \b(\zeta p^\infty) =  N-1,$$
where $\zeta$ is the unique prefix defined by a $-N$-segmentation with $e_{N-1} = n$.
\end{prop}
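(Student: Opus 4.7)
The plan is to mirror the proof of Proposition~\ref{prop:corneredn} in dual form, using $p$ and the words $s^{(m)}$ in place of $q$ and $t^{(m)}$. Since $\pi$ is cornered of the form $\pi_{n-2}\pi_{n-1}\pi_n = 2n1$, we have $\pi_n = 1$ (so $q$ is undefined) and $\ell = n-1$ (so $n-\ell = 1$ is odd), placing us exactly in the regime where condition~\eqref{eq:smdefined} and Equation~\eqref{eq:smdef} govern.

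First I would compute $\b(\zeta p^\infty)$ explicitly. By the observations following Lemma~\ref{numberseg}, the choice $e_{N-1} = n$ yields the unique prefix $\zeta \in \{0,1,\dots,N{-}2\}^{n-1}$ with $p = 0(N{-}2)$, and in particular $z_{n-1} = N{-}2$. Since $\ell = n-1$, Proposition~\ref{prop:bb} identifies $z_{[\ell,n-1]}p^\infty = ((N{-}2)\,0)^\infty$ as the largest subword of $\zeta p^\infty$, so $\b(\zeta p^\infty)$ is the largest real root of $f_{((N-2)0)^\infty}(\beta) = 1$. A geometric-series computation gives $f_{((N-2)0)^\infty}(\beta) = \frac{(N-1)\beta - 1}{\beta^2 - 1}$, so the equation becomes $\beta^2 = (N-1)\beta$, whose largest real solution is $N-1$ (with the convention $\b = 1$ when the maximal subword is $\lesseq u$ handling the degenerate case $N = 2$).

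For the lower bound $\B(\pi) \ge N-1$, I would suppose instead that some $\beta < N-1$ admits $w \in \Wb$ inducing $\pi$. Then every letter of $w$ lies in $\{0,1,\dots,\lfloor\beta\rfloor\} \subseteq \{0,1,\dots,N{-}2\}$, so $w \in \mathcal{W}_{N-1}$, and since $\Sb$ and $\Sigma_{-(N-1)}$ act identically on such words under the same alternating lexicographic order, $w$ would also induce $\pi$ under $\Sigma_{-(N-1)}$. This contradicts Theorem~\ref{negativeshift}, which gives $\N(\pi) = N$ for cornered $\pi$.

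For the upper bound $\B(\pi) \le N-1$, pick any $\beta > \b(\zeta p^\infty) = N-1$. Since $n-\ell$ is odd and $\pi_n = 1 \neq n$ (so $s^{(m)}$ is defined), condition~\eqref{eq:smdefined} holds, and Proposition~\ref{prop:pat} produces $m_0$ with $s^{(m)} \in \Wb$ and $\Pat(s^{(m)},\Sb,n) = \pi$ for all $m \ge m_0$; hence $\pi \in \Al(\Sb)$ and $\B(\pi) \le \beta$, and taking the infimum gives $\B(\pi) \le N-1$. I do not expect a deep obstacle; the only subtle point is checking that the parities ($h-x = 1$ is odd here) place us in case~(\ref{eq:smdef}.3) rather than the exceptional case~\eqref{eq:wend}, which makes the application of Proposition~\ref{prop:pat} routine.
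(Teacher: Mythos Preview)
Your proposal is correct and takes essentially the same approach as the paper, which simply says the proof ``follows in the same way as Proposition~\ref{prop:corneredn}, using now that $p = 0(N{-}2)$.'' You have accurately filled in the dual argument: identifying $\ell=n-1$, $x=n-2$, $h=\ell$ so that $h-x=1$ is odd (case~(\ref{eq:smdef}.3), not the exceptional case~\eqref{eq:wend}), computing the largest subword $z_{[\ell,n-1]}p^\infty=((N{-}2)0)^\infty$ and solving $f_{((N-2)0)^\infty}(\beta)=1$ to get $\b(\zeta p^\infty)=N-1$, invoking Theorem~\ref{negativeshift} for the lower bound, and Proposition~\ref{prop:pat} with $s^{(m)}$ for the upper bound.
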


\begin{proof}  The proof follows in the same way as Proposition \ref{prop:corneredn}, using now that $p = 0(N{-}2)$.  
 \end{proof}

\begin{example}  Let $\pi = 34251$, a cornered permutation of the form $\pi_{n-2} \pi_{n-1} \pi_{n} = 2 n 1$.  Then $\hat{\pi} = {\star} 5421$, $\asc(\hat{\pi}) = 0$ and $\N(\pi) = 2$.  A valid $-2$-segmentation of $\hat{\pi}$ such that $e_1 = 5$ is given by $(e_0, e_1, e_2) = (0, 5, 5)$, defining the prefix $\zeta = 0000$ with $p= 00$.  By Proposition \ref{prop:cornered2}, $\B(\pi) = 1$.  Moreover, by Lemma \ref{betainduces} and Lemma \ref{lem:inWb}, $t^{(m)} = 0000(00)^{2m} \wmin$, $t^{(m)} \in \Wb$ and $\Pat(t^{(m)}, \Sigma_{-\beta}, n) = \pi$ for all $\beta > 1$ and $m \geq 2$.  \end{example}

 \begin{prop} \label{prop:collapsednlodd} Let $\pi$ be a collapsed permutation such that $n-\ell$ is odd. For $1 \leq i \leq \min\{|p|, |q|\}$, let $\zeta^{(i)}$ be the prefixes obtained by the valid $-N$-segmentations of $\hat{\pi}$.  Define $p^{(i)} = z_{[x, n-1]}^{(i)}$.  Let $\zeta^{(k)}$ be a prefix such that $z_{[\ell, n-1]}^{(k)} p^{(k)}$ is minimal, with respect to $\less$, among these choices of segmentation.
 Then $$\B(\pi) = \b(\zeta^{(k)} (p^{(k)})^\infty).$$  \end{prop}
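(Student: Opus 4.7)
The argument follows the template of Proposition~\ref{prop:regularnlodd}, with the additional wrinkle that a collapsed permutation admits several valid prefixes $\zeta^{(1)},\dots,\zeta^{(\min\{|p|,|q|\})}$ (by Lemma~\ref{numberseg}) and we must single out the minimizing one. Since $\pi$ is collapsed, $\pi_n \notin \{1,n\}$, so $x$ and $p$ are defined; and $n-\ell$ odd means \eqref{eq:smdefined} is the relevant regime.

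For the upper bound $\B(\pi) \leq \b(\zeta^{(k)}(p^{(k)})^\infty)$, the plan is to fix any $\beta > \b(\zeta^{(k)}(p^{(k)})^\infty)$ and use $\zeta^{(k)}$ (together with $p^{(k)}$) as the data feeding the construction of Section~\ref{sec:words}, producing the words $s^{(m)} = \zeta^{(k)} (p^{(k)})^{2m}\wend$. Condition~\eqref{eq:smdefined} holds by choice of $\beta$, so Proposition~\ref{prop:pat} yields some $m_0 \geq (n-1)/2$ such that $s^{(m)} \in \Wb$ and $\Pat(s^{(m)}, \Sigma_{-\beta}, n) = \pi$ for all $m \geq m_0$. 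Hence $\pi \in \Al(\Sigma_{-\beta})$, giving $\B(\pi) \leq \beta$ and, letting $\beta$ decrease to $\b(\zeta^{(k)}(p^{(k)})^\infty)$, the desired inequality.

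For the lower bound $\B(\pi) \geq \b(\zeta^{(k)}(p^{(k)})^\infty)$, let $w \in \WN$ be any word inducing $\pi$. By Lemma~\ref{beginszeta}, $w_{[1,n-1]} = \zeta^{(i)}$ for some $i$, and setting $p^{(i)} = z^{(i)}_{[x,n-1]}$ gives $w_{[x,\infty)} = p^{(i)} w_{[n,\infty)}$. Since $\pi_n < \pi_x$, we have $w_{[n,\infty)} \less p^{(i)} w_{[n,\infty)}$, and Lemma~\ref{dinfinity} yields $w_{[n,\infty)} \less (p^{(i)})^\infty$. Because $n-\ell$ is odd, prepending the odd-length word $z^{(i)}_{[\ell,n-1]}$ flips the alternating lex order, so
$$w_{[\ell,\infty)} = z^{(i)}_{[\ell,n-1]} w_{[n,\infty)} \gess z^{(i)}_{[\ell,n-1]} (p^{(i)})^\infty.$$
Now the key point is that the minimality of $z^{(k)}_{[\ell,n-1]} p^{(k)}$ as a finite word transfers to the associated infinite word. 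Indeed, $|z^{(j)}_{[\ell,n-1]} p^{(j)}| = (n-\ell) + |p|$ is the same for every $j$, so if $z^{(i)}_{[\ell,n-1]} p^{(i)} \neq z^{(k)}_{[\ell,n-1]} p^{(k)}$, their first point of disagreement lies inside this common-length window and determines the $\less$ comparison of both the finite and the infinite versions; and if they agree, the infinite words coincide. Thus $z^{(i)}_{[\ell,n-1]}(p^{(i)})^\infty \gesseq z^{(k)}_{[\ell,n-1]}(p^{(k)})^\infty$, and combining,
$$w_{[\ell,\infty)} \gess z^{(k)}_{[\ell,n-1]}(p^{(k)})^\infty.$$
By Proposition~\ref{prop:bb}, $z^{(k)}_{[\ell,n-1]}(p^{(k)})^\infty$ is the maximal subword of $\zeta^{(k)}(p^{(k)})^\infty$, so Lemma~\ref{betab} gives $\bar{\beta}(w) \geq \bar{\beta}(\zeta^{(k)}(p^{(k)})^\infty)$, which equals $\b(\zeta^{(k)}(p^{(k)})^\infty)$ by Lemma~\ref{betcor}. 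Taking the infimum over all $w$ inducing $\pi$ and using Definition~\ref{def:Bpi} concludes the inequality.

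The main obstacle is the minimality-transfer step: one must carefully argue that picking a prefix $\zeta^{(k)}$ minimizing the finite word $z^{(k)}_{[\ell,n-1]} p^{(k)}$ also minimizes the infinite word $z^{(k)}_{[\ell,n-1]} (p^{(k)})^\infty$ in the alternating lex order. This is where the collapsed case genuinely differs from the regular case (Proposition~\ref{prop:regularnlodd}), where the uniqueness of $\zeta$ in Lemma~\ref{numberseg} renders this step trivial.
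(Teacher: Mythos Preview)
Your proposal is correct and follows essentially the same route as the paper's own proof: upper bound via Proposition~\ref{prop:pat} applied to the prefix $\zeta^{(k)}$, and lower bound by taking an arbitrary $w$ inducing $\pi$, invoking Lemma~\ref{beginszeta} to get $w_{[1,n-1]}=\zeta^{(i)}$ for some $i$, then using Lemma~\ref{dinfinity}, the odd parity of $n-\ell$, Proposition~\ref{prop:bb}, Lemma~\ref{betab}, and Lemma~\ref{betcor} exactly as the paper does. Your extra paragraph justifying that minimality of the finite word $z^{(k)}_{[\ell,n-1]}p^{(k)}$ transfers to minimality of the infinite word $z^{(k)}_{[\ell,n-1]}(p^{(k)})^\infty$ (via the common length $(n-\ell)+(n-x)$ of all the $z^{(j)}_{[\ell,n-1]}p^{(j)}$) makes explicit a step the paper simply asserts with a $\gesseq$; this is a welcome clarification rather than a different approach.
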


\begin{proof} To show $\B(\pi) \leq \b(\zeta^{(k)} ({p^{(k)}})^\infty)$, let $\beta > \b(\zeta^{(k)} (p^{(k)})^\infty)$.  By Proposition \ref{prop:pat} there exists an $m_0 \geq \frac{n-1}{2}$ such that for all $m \geq m_0$, we have $s^{(k, m)} \in \Wb$ and $\Pat(s^{(k, m)},\Sigma_{-\beta}, n)=\pi$.

Now we show $\B(\pi) \geq \b(\zeta^{(k)} ({p^{(k)}})^\infty)$.  Let $\b(\zeta^{(k)}({p^{(k)}})^\infty) > \beta > 1$ and let $w \in \Wb$ be a word inducing $\pi$.   By Lemma \ref{beginszeta}, $w$ must start with $\zeta^{(i)}$ for some $1 \leq i \leq \min \{ |p|, |q|\}$.   Write $w = \zeta^{(i)} w_{[n, \infty)}$.  Since $w$ induces $\pi$, we obtain $ w_{[n, \infty)} \less  w_{[x, \infty)} = p^{(i)} w_{[n, \infty)}$, and by Lemma \ref{dinfinity}, we have $ w_{[n, \infty)} \less (p^{(i)})^\infty$.  Since $n - \ell$ is odd, we obtain 
$$w_{[\ell, \infty)} = z^{(i)}_{[\ell, n-1]} w_{[n, \infty)} \gess z^{(i)}_{[\ell, n-1]} (p^{(i)})^\infty \gesseq z^{(k)}_{[\ell, n-1]}(p^{(k)})^\infty.$$
Moreover, Lemma \ref{betab}  implies that $\bar{\beta}(w) \ge \bar{\beta}(z^{(k)}_{[\ell, n-1]} (p^{(k)})^\infty)$ because, by Proposition \ref{prop:bb}, \\ $z^{(k)}_{[\ell, n-1]}(p^{(k)})^\infty$ is the largest subword of $\zeta^{(k)} (p^{(k)})^\infty$. Since Proposition \ref{betcor} tells us $\bar{\beta}(\zeta^{(k)} (p^{(k)})^\infty) = \b(\zeta^{(k)} (p^{(k)})^\infty)$, we may now conclude that $\B(\pi) \ge \b(\zeta^{(k)}(p^{(k)})^\infty).$ 
\end{proof}

\begin{example} Let $\pi = 41853762$, a collapsed permutation such that $n-\ell$ is odd.  Then $\hat{\pi} = 8 {\star} 7 1 3265$ and $\asc(\hat{\pi}) = 2$.  The only minimal segmentation $(e_0, e_1, e_2, e_3) = (0, 4, 6, 8)$ produces $\zeta = 0021021$, which satisfies $q = p^2$, where $p = 021$.  Therefore, $\pi$ is collapsed and $\N(\pi) = 4$.  There are three valid $-4$-segmentations of $\hat{\pi}$ giving rise to the prefixes $\zeta^{(1)} = 1032132$, $\zeta^{(2)} = 0031032$ and $\zeta^{(3)} = 0031021$.   Now we must choose the segmentation such that $z_{[\ell, n-1]}^{(i)}p^{(i)}$ is minimized.  In this case, it is $\zeta^{(1)}$ with this property.   
Since both $h-x$ and $|p|$ are even, we use (\ref{eq:smdef}.1) to give $\wend = z_{[x, h-1]} \wmin = 32 \wmin$. By Proposition \ref{prop:collapsednlodd}, we find $\B(\pi) = \b(\zeta^{(1)}(p^{(1)})^\infty) \approx 3.148$.  Moreover, by Lemma \ref{betainduces}, we obtain:
$s^{(1, m)} = 1032132 (32132)^{2m} 32 \wmin$ and $s^{(1, m)}$ induces $\pi$ for all $\beta > \b(\zeta^{(1)} (p^{(1)})^{ \infty})$ and $m \geq 4$.  Moreover, by Lemma \ref{lem:inWb}, given a $\beta > \b(\zeta^{(1)} (p^{(1)})^\infty)$, there exists an $m_0 \geq 4$ such that, for any $m \geq m_0$, we have $s^{(1, m)} \in \Wb$ and $\Pat(s^{(1, m)}, \Sigma_{-\beta}, n) = \pi$. \end{example}

 \begin{prop} \label{prop:collapsednleven} Let $\pi$ be a collapsed permutation such that $n-\ell$ is even. For $1 \leq i \leq \min\{|p|, |q|\}$, let $\zeta^{(i)}$ be the prefixes obtained by the valid $-N$-segmentations of $\hat{\pi}$.  Define $q^{(i)} = z_{[y, n-1]}^{(i)}$.  Let $\zeta^{(k)}$ be a prefix such that $z_{[\ell, n-1]}^{(k)} q^{(k)}$ is minimal among these choices of segmentation.
Then  $$\B(\pi) = \b(\zeta^{(k)} (q^{(k)})^\infty).$$  \end{prop}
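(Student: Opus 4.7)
My plan is to mirror the argument of Proposition~\ref{prop:collapsednlodd}, swapping the roles of $p^{(i)}$, $x$, and $s^{(k,m)}$ with $q^{(i)}$, $y$, and $t^{(k,m)}$, and accounting for $n-\ell$ being even rather than odd. Since $\pi$ is collapsed we have $\pi_n\notin\{1,n\}$; in particular $\pi_n\neq 1$, so condition~\eqref{eq:tmdefined} applies whenever $\beta>\b(\zeta^{(k)}(q^{(k)})^\infty)$, making Proposition~\ref{prop:pat} available for the sequence $t^{(k,m)}$.

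For the upper bound $\B(\pi)\leq\b(\zeta^{(k)}(q^{(k)})^\infty)$, I would fix $\beta>\b(\zeta^{(k)}(q^{(k)})^\infty)$ and appeal directly to Proposition~\ref{prop:pat}: it supplies some $m_0\geq\frac{n-1}{2}$ such that, for every $m\geq m_0$, the word $t^{(k,m)}$ lies in $\Wb$ and satisfies $\Pat(t^{(k,m)},\Sigma_{-\beta},n)=\pi$. Hence $\pi\in\Al(\Sigma_{-\beta})$, and taking the infimum yields the desired inequality.

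For the lower bound, I would take $1<\beta<\b(\zeta^{(k)}(q^{(k)})^\infty)$ and suppose, toward a contradiction, that some $w\in\Wb$ induces $\pi$. Lemma~\ref{beginszeta} forces $w=\zeta^{(i)}w_{[n,\infty)}$ for one of the valid prefixes $\zeta^{(i)}$. The relation $\pi_y<\pi_n$ gives $q^{(i)}w_{[n,\infty)}=w_{[y,\infty)}\less w_{[n,\infty)}$, and Lemma~\ref{dinfinity} then yields $(q^{(i)})^\infty\less w_{[n,\infty)}$. Because $n-\ell$ is even, prepending $z^{(i)}_{[\ell,n-1]}$ preserves the inequality, and the minimality of $\zeta^{(k)}$ gives
$$w_{[\ell,\infty)}\;\gess\;z^{(i)}_{[\ell,n-1]}(q^{(i)})^\infty\;\gesseq\;z^{(k)}_{[\ell,n-1]}(q^{(k)})^\infty.$$
Proposition~\ref{prop:bb} identifies $z^{(k)}_{[\ell,n-1]}(q^{(k)})^\infty$ as the maximal subword of $\zeta^{(k)}(q^{(k)})^\infty$, so Lemma~\ref{betab} gives $\bar\beta(w)\geq\bar\beta(\zeta^{(k)}(q^{(k)})^\infty)$, and Lemma~\ref{betcor} equates this with $\b(\zeta^{(k)}(q^{(k)})^\infty)$; since $w\in\Wb$ entails $\beta\geq\bar\beta(w)$, this contradicts the choice of $\beta$.

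The main subtlety is justifying the second inequality in the displayed chain, namely that minimality of the finite words $z^{(i)}_{[\ell,n-1]}q^{(i)}$ transfers to minimality of the periodic tails $z^{(i)}_{[\ell,n-1]}(q^{(i)})^\infty$. Here I would observe that $y$ is intrinsic to $\pi$, so every $q^{(i)}$ shares the common length $n-y$, and hence the finite words in question all have length $n-\ell+(n-y)$; any strict alternating-lex inequality between them is therefore witnessed inside this common prefix and extends verbatim to the periodic extensions, while equality of the finite prefixes forces equality of the infinite words. This is the only place where the even parity of $n-\ell$ intervenes differently than in the odd case, and it enters only through the sign of the preserved inequality.
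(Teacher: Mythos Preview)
Your proposal is correct and follows exactly the approach the paper intends: the paper's own proof reads simply ``The proof follows in the same way as Proposition~\ref{prop:collapsednlodd},'' and your argument is the natural $q$-analogue of that proof, with the parity adjustment handled correctly. Your final paragraph on why minimality of the finite words $z^{(i)}_{[\ell,n-1]}q^{(i)}$ (all of the common length $2n-\ell-y$) transfers to the periodic extensions is a point the paper leaves implicit even in Proposition~\ref{prop:collapsednlodd}, so you are being slightly more careful than the original.
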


\begin{proof} The proof follows in the same way as Proposition \ref{prop:collapsednlodd}.  \end{proof}

\begin{example} \label{infnotmin} 
Let $\pi = 564132$, a collapsed permutation such that $n-\ell$ is even. Then $\hat{\pi} = 3{\star}2164$ and  $\asc(\hat{\pi}) = 1$.  The only minimal segmentation $(e_0, e_1, e_2) = (0, 4, 6)$ produces $\zeta = 11000$, which satisfies $q = p^2$, where $p = 0$.  Therefore, $\pi$ is collapsed and $\N(\pi) = 3$.  There is only one valid $-3$-segmentation, given by $(e_0, e_1, e_2, e_3) = (0, 1, 4, 6)$.  This defines the prefix $\zeta^{(1)} = 22101$, with $q^{(1)} = 01$.  Since $h-y$ is odd and $|q|$ is even, we use Equation~(\ref{eq:tmdef}.1) to get $\wend =  z_{[y, h-1]} \wmin = 0 \wmin$.  By Proposition \ref{prop:collapsednleven}, $\B(\pi) = \b(\zeta^{(1)}(q^{(1)})^\infty) = 2$. By Lemma \ref{betainduces}, $s^{(1, m)} = 22101(01)^{2m} 0 \wmin$ induces $\pi$ for all $\beta > \b(\zeta^{(1)}(q^{(1)) \infty}) = 2$ and $m \geq 3$.   Moreover, by Lemma \ref{lem:inWb}, for every $\beta > 2$ there exists an $m_0 \geq 3$ such that, for all $m \geq m_0$, we have $s^{(1, m)} \in \Wb$ and $\Pat(s^{(1, m)}, \Sigma_{-\beta}, n) = \pi$.    \end{example}

\begin{thm} Let $\pi \in \S_{n}$, and let $\gamma > \B(\pi)$.  The propositions in this section, summarized in Table \ref{tab:cases},  give a construction for a word $v \in \mathcal{W}_{-\gamma}$ inducing $\pi$.  \end{thm}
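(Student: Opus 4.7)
The plan is a case analysis according to Table \ref{tab:cases}, with each subcase discharged by the corresponding proposition from this section. Given $\pi \in \S_n$, I would first decide whether $\pi$ is regular, cornered, or collapsed (these three types are exhaustive and mutually exclusive by the definitions in Section~\ref{sec:-N}), and then select the refined subcase via the parity of $n-\ell$, whether $\pi_n = 1$, or the specific form of $\pi_{n-2}\pi_{n-1}\pi_n$ for cornered permutations. In each of the seven resulting subcases, the corresponding proposition identifies $\B(\pi)$ with $\b(w_0)$ for a particular word $w_0$ listed in the last column of Table \ref{tab:cases} (one of $\zeta p^\infty$, $\zeta(0 z_{[\ell, n-1]})^\infty$, $\zeta q^\infty$, $\zeta^{(k)}(p^{(k)})^\infty$, or $\zeta^{(k)}(q^{(k)})^\infty$).

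For the six subcases whose construction involves a parametrized family $s^{(m)}$ or $t^{(m)}$ (or $s^{(k,m)}$, $t^{(k,m)}$ in the collapsed cases, where one must first select a prefix $\zeta^{(k)}$ minimizing $z_{[\ell, n-1]}^{(k)} p^{(k)}$ or $z_{[\ell, n-1]}^{(k)} q^{(k)}$ as described in Propositions \ref{prop:collapsednlodd} and \ref{prop:collapsednleven}), I would apply Proposition \ref{prop:pat} with $\beta = \gamma$. Since $\gamma > \B(\pi) = \b(w_0)$, Proposition \ref{prop:pat} furnishes an $m_0 \geq \frac{n-1}{2}$ such that $s^{(m)} \in \mathcal{W}_{-\gamma}$ (respectively $t^{(m)} \in \mathcal{W}_{-\gamma}$) and $\Pat(s^{(m)}, \Sigma_{-\gamma}, n) = \pi$ for all $m \geq m_0$, so I would fix any such $m$ and take $v$ to be that word. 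In the remaining regular subcase where $n - \ell$ is even and $\pi_n = 1$, neither $t^{(m)}$ is defined (since $y$ does not exist) nor is $s^{(m)}$ applicable; here Proposition \ref{prop:regularnone} handles the construction directly, providing $v = \zeta \omega_\gamma$ explicitly and verifying that $v \in \mathcal{W}_{-\gamma}$ and $\Pat(v, \Sigma_{-\gamma}, n) = \pi$ whenever $\gamma > \B(\pi) = \b(\zeta(0 z_{[\ell, n-1]})^\infty)$.

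No substantially new difficulty arises: every ingredient has already been established in Propositions \ref{prop:regularnlodd} through \ref{prop:collapsednleven} and in Proposition \ref{prop:pat}. The only bookkeeping task is to confirm that each subcase of Table \ref{tab:cases} triggers the correct proposition, and that the hypotheses of each construction---such as $\pi_n \neq n$ for $s^{(m)}$, $\pi_n \neq 1$ for $t^{(m)}$, and the existence of a valid $-N$-segmentation guaranteed by Lemma \ref{geq}---are automatically satisfied by the defining features of that subcase. The hardest part conceptually was already dispatched in proving the seven propositions themselves; here the work is purely organizational.
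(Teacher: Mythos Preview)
Your proposal is correct and matches the paper's intent: the theorem is stated as a summary of the section with no separate proof, precisely because each case is discharged by the proposition listed in Table~\ref{tab:cases}, invoking Proposition~\ref{prop:pat} for the six parametrized-family subcases and Proposition~\ref{prop:regularnone} directly for the remaining one. Your verification that the hypotheses of Proposition~\ref{prop:pat} (conditions~\eqref{eq:smdefined} or~\eqref{eq:tmdefined}) are met in each subcase is exactly the bookkeeping the paper leaves implicit.
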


\section{Computation of $\B(\pi)$}\label{sec:B}

In this section we find the negative shift-complexity, $\B(\pi)$, of a given permutation $\pi$ by expressing it as the largest real root of a certain polynomial $\P_{\pi}(x)$, in analogy to the construction in \cite{Elibeta} for $\beta$-shifts.  If $w$ is periodic, write $w = (w_{[1, r]})^\infty$ where $r$ is minimal with this property, and define
$$p_{w}(x) =  (-x)^r -1 + \sum_{j = 1}^r (w_{j}+1)(-x)^{r-j}.$$
More generally, if $w$ is eventually periodic, write $w = w_{[1, k]} (w_{[k+1, r]})^\infty$, where $k$ and $r$ are minimal  with this property, and define
$$p_{w}(x) = \left((-x)^{r-k} - 1\right) \left((-x)^{k} + \sum_{i = 1}^{k}(w_i + 1)(-x)^{k-i}\right) + \sum_{j = 1}^{r-k} (w_{k+j} + 1)(-x)^{r-k-j}.$$

Recall from Lemma \ref{numberseg} that, if $\pi$ is regular, there is a unique prefix $\zeta$ arising from a valid $-\N(\pi)$-segmentation of $\hat{\pi}$ as in Definition~\ref{segmentation}, and if $\pi$ is collapsed, there are $\min \{ |p|, |q| \}$ distinct such prefixes.

\begin{thm}\label{thm:P} For any $\pi \in \S_n$ with $n \geq 2$, let $N = \N(\pi)$, and define indices $\ell$, $x$ and $y$ by $\pi_{\ell} = n$, $\pi_x = \pi_n + 1$ (defined only if $\pi_n\neq n$) and $\pi_y = \pi_n - 1$ (defined only if $\pi_n\neq 1$).   Define a polynomial $\P_{\pi}(x)$ in each case as follows.

\begin{itemize}
\item If $\pi$ is regular, let $\zeta$ be the unique prefix arising from a valid $-N$-segmentation of $\hat{\pi}$, and consider three cases:
\begin{itemize} 
\item If $n - \ell$ is odd, let
$$\P_{\pi}(x) = p_{z_{[\ell, n-1]} (z_{[x, n-1]})^\infty}(x). $$
\item If $n-\ell$ is even and $\pi_n = 1$, let
$$\P_{\pi}(x) = p_{(z_{[\ell, n-1]}0)^\infty}(x).$$
\item If $n-\ell$ is even and $\pi_n \neq 1$, let 
$$\P_{\pi}(x) = p_{z_{[\ell, n-1]}(z_{[y, n-1]})^\infty}(x).$$
\end{itemize}
\item If $\pi$ is cornered, let 
$$\P_{\pi}(x) = x - (N-1).$$
\item If $\pi$ is collapsed, consider two cases:
\begin{itemize}
\item If $n-\ell$ is odd, let $\zeta^{(i)}$, for $1 \leq i \leq \min\{|p|, |q|\}$, be the prefixes arising from  valid $-N$-segmentations of $\hat{\pi}$, let $k$ be the value of $i$ that minimizes
$z^{(i)}_{[\ell, n-1]}(z^{(i)}_{[x, n-1]})^\infty$ with respect to $\less$, and let
$$\P_{\pi}(x) = p_{z^{(k)}_{[\ell, n-1]} (z^{(k)}_{[x, n-1]})^\infty}(x).$$
\item If $n-\ell$ is even, let $\zeta^{(i)}$, for $1 \leq i \leq \min \{ |p|, |q| \}$, be the prefixes arising from the valid $-N$-segmentations of $\hat{\pi}$, let $k$ be the value of $i$ that minimizes 
$z^{(i)}_{[\ell, n-1]}(z^{(i)}_{[y, n-1]})^\infty$ with respect to $\less$, and let
$$\P_{\pi}(x) = p_{z^{(k)}_{[\ell, n-1]} (z^{(k)}_{[y, n-1]})^\infty}(x).$$
\end{itemize}
\end{itemize}
Then $\B(\pi)$ is the largest real root $\beta \geq 1$ of $\P_{\pi}(x)$.  
\end{thm}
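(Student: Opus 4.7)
The plan is to combine the results of Section~\ref{sec:words} with a direct algebraic manipulation of the defining series of $f_v(x)$. The propositions summarized in Table~\ref{tab:cases} already express $\B(\pi) = \b(w)$ for an explicit eventually periodic word $w$ in each case, except in the cornered case where $\B(\pi) = N-1$ is given directly. By Definition~\ref{barb} together with Lemma~\ref{lem:unique_largest}, when $\b(w) > 1$ the value $\b(w)$ is the largest real solution $\beta > 1$ of $f_{w_{[l,\infty)}}(\beta) = 1$, where $w_{[l,\infty)}$ is a maximal subword of $w$ with respect to $\less$. The theorem therefore reduces to two tasks: identify this maximal subword explicitly in each row of Table~\ref{tab:cases}, and rewrite the equation $f_v(x) = 1$ as the polynomial equation $p_v(x) = 0$ for an eventually periodic~$v$.

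The second task is a routine geometric-series calculation that I would dispatch first. If $v = (v_{[1,r]})^\infty$ is purely periodic with minimal period $r$, summing $f_v(x) = -\sum_{j \ge 1}(v_j + 1)/(-x)^j$ as a geometric series and clearing the denominator $(-x)^r - 1$ (valid for $x > 1$) rearranges $f_v(x) = 1$ into $p_v(x) = 0$. For an eventually periodic word $v = v_{[1,k]}(v_{[k+1,r]})^\infty$ with $k$ and $r$ minimal, the same manipulation applied to the periodic tail, combined with the aperiodic-prefix contribution $\sum_{j=1}^k (v_j + 1)/(-x)^j$ and multiplication by $((-x)^{r-k} - 1)(-x)^k$, yields the stated formula for $p_v(x)$. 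This step is mechanical but requires care with signs and with the product structure of the two factors.

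For the first task, I would proceed case by case in parallel with Table~\ref{tab:cases}. When $\pi$ is regular or collapsed with $n-\ell$ odd, Proposition~\ref{prop:bb}, together with the argument underlying Propositions~\ref{prop:regularnlodd} and~\ref{prop:collapsednlodd}, identifies $z_{[\ell,n-1]}p^\infty$, respectively $z^{(k)}_{[\ell,n-1]}(p^{(k)})^\infty$, as the maximal subword of the constructed $w$; the cases with $n-\ell$ even and $\pi_n \neq 1$ are symmetric, using $q$ in place of $p$. In the regular case with $\pi_n = 1$, the proof of Proposition~\ref{prop:regularnone} already shows that every subword $w_{[c,\infty)}$ of $w = \zeta \wmin$ satisfies $w_{[c,\infty)} \lesseq z_{[\ell,n-1]} \wmin$; the same comparison, applied to $\zeta(0 z_{[\ell,n-1]})^\infty$, identifies $(z_{[\ell,n-1]} 0)^\infty$ as the maximal subword. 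The cornered case is immediate from Propositions~\ref{prop:corneredn} and~\ref{prop:cornered2}: since $\N(\pi) \ge 2$, the linear polynomial $\P_\pi(x) = x - (N-1)$ has largest real root $N-1 \ge 1$. The main obstacle is bookkeeping across these cases; in particular, one must verify that the exceptional definitions of $\wend$ in Equations~\eqref{eq:wend} and~\eqref{eq:wendq} do not disturb the maximal subword, which is essentially the content of Proposition~\ref{prop:bb}.
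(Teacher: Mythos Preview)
Your proposal is correct and follows essentially the same approach as the paper's own proof: both invoke the propositions of Section~\ref{sec:words} (Table~\ref{tab:cases}) to write $\B(\pi)=\b(w)$, use Proposition~\ref{prop:bb} (with a separate argument for the $\pi_n=1$ case) to identify the maximal subword $w_{[\ell,\infty)}$, and then clear denominators in $f_{w_{[\ell,\infty)}}(x)=1$ to obtain $p_{w_{[\ell,\infty)}}(x)=0$. One small remark: the exceptional choices of $\wend$ in Equations~\eqref{eq:wend} and~\eqref{eq:wendq} affect only the auxiliary words $s^{(m)}$ and $t^{(m)}$, not the limiting word $w=\zeta p^\infty$ (or $\zeta q^\infty$, etc.) whose maximal subword you need, so there is nothing extra to verify there; also, you should note explicitly, as the paper does, that when $w_{[\ell,\infty)}\lesseq u$ one has $\b(w)=1$, matching the ``largest real root $\beta\ge1$'' convention in the statement.
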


Notice that $\P_{\pi}(x)$ is always a monic polynomial with integer coefficients.  Moreover, for $\pi \in \S_n$, its degree is never greater than $n-1$.  
\begin{proof}
In the propositions of Section 4, summarized in Table~\ref{tab:cases}, for a given permutation $\pi$ we found a word $w$ such that $\B(\pi) = \b(w)$.   Suppose that we do not have that $n - \ell$ is even and $\pi_n = 1$, a case we will consider separately.   Then Proposition \ref{prop:bb} implies that $\b(w)$ is the largest real solution to $f_{w_{[\ell, \infty)}}(x) = 1$  if $w_{[\ell, \infty)} \gess u$ and $\b(w) = 1$ otherwise.

Now suppose that $n - \ell$ is even and $\pi_n =1$, in which case we found $w = \zeta (0 z_{[\ell, n-1]})^\infty$.  Moreover, by an argument similar to Proposition \ref{prop:bb}, the fact that $\zeta \wmin$ induces $\pi$ whenever $\beta > \b(\zeta (0 z_{[\ell, n-1]})^\infty)$ implies that $w_{[\ell, \infty)} = (z_{[\ell, n-1]}0)^\infty$ is the largest subword of $w$.  Therefore, Definition \ref{barb} implies that $\b(w)$ is the largest real solution to $f_{w_{[\ell, \infty)}}(x) = 1$ whenever $w_{[\ell, \infty)} \gess u$ and $\b(w) = 1$ otherwise.  

By rearranging the expression, finding the largest real solution to $f_{w_{[\ell, \infty)}}(x) = 1$ is equivalent to finding the largest real root of  $p_{w_{[\ell, \infty)}}(x)$.  With the word $w$ determined by each of the propositions in Table~\ref{tab:cases}, these are exactly the polynomials listed above.  Therefore, $\B(\pi)$ is equal to the largest real root $\beta \geq 1$ of $\P_{\pi}(x)$, and equal to $1$ if no such root exists (occurring exactly in the case that $w_{[\ell, \infty)} \less u$).
\end{proof}

\begin{table}[htb]
\begin{tabular}{|c |c|c|c|} 
 \hline
$\pi$ & $\B(\pi)$ & $\P_\pi(\beta)$ \\
 \hline
 1324, 1342, 1432, 2134, 2143, 2314, & 1 & $\beta -1$  \\
 
 2431, 3142, 3214, 3241, 3412, 3421, 4213 & & \\

 \hline
1423, 4231 & 1.618 & $\beta^2 - \beta - 1$ \\
\hline
 2341, 2413, 3124, 4123 & 1.755 & $\beta^3 - 2\beta^2 + \beta - 1$\\
 \hline
 4132 & 1.839 & $\beta^3 - \beta^2 - \beta - 1$ \\
 \hline
1234, 1243, 4312 & 2 & $\beta - 2$ \\
 \hline
4321 & 2.247 & $\beta^3 - 2\beta^2 - \beta + 1$ \\
\hline
\end{tabular}
\caption{The negative shift-complexity of all permutations of length $4$.}
\label{tab:4}
\end{table}

\begin{table}[htb]{\small
\begin{tabular}{|c |c|c|c|} 
 \hline
$\pi$ & $\B(\pi)$ & $\P_\pi(\beta)$ \\
 \hline
$13425, 14352, 14523, 15243, 15324, 15342, 15423, 15432,  21435, 21453,$ & \multirow{3}{*}{$1$} & \multirow{3}{*}{$\beta - 1$} \\
$23145, 23154, 24153, 24315, 24531, 32415, 32541, 34215, 34251, 35214,$ & & \\
$35421,  41523, 41532, 43152, 43512, 43521, 45231, 52143, 52314, 54213$ & & \\

 \hline

$ 52134$ & $1.3247$ & $\beta^3 - \beta - 1$ \\
  \hline 
$14532, 31452, 42135, 45213, 53241$ & $1.4656$ & $\beta^3 - \beta^2 - 1$ \\ 
  \hline 
 
 $14235, 15234, 23514, 25134, 25341, 31524, 32451,$ & \multirow{2}{*}{$1.7549$} &  \multirow{2}{*}{$\beta^3 - 2\beta^2 + \beta - 1$}\\ 

  $34125, 34152, 42315, 42351, 42513, 45321, 51342$ & & \\
  \hline 
$ 15243, 25143, 51324$ &  $1.8393$ & $\beta^3 - \beta^2 - \beta - 1$ \\
  \hline 
 $24351,  32514, 35142, 41325, 52431$ & $1.8832$ & $\beta^4 - 2\beta^3 + \beta^2 - 2\beta + 1$ \\
  \hline 
 $51432$ & $1.89718$ & $\beta^4 - 2\beta^3 + \beta^2 - \beta - 1$ \\
 \hline 
$51423$ & $1.92756$ & $\beta^4 - \beta^3 - \beta^2 - \beta - 1$ \\

\hline
 $12435, 12453, 13245, 13254, 15423,  21345, 21534,  23415, 23541,$  & \multirow{2}{*}{2} & \multirow{2}{*}{$\beta - 2$}\\ $25314, 25413, 31245, 31254, 31542, 32145, 32154, 43251, 45312, 54132$ &  &  \\ 
  \hline 
$54123$ & $2.1479$ & $\beta^3 - \beta^2 - 2\beta - 1$ \\ 
 \hline 
$53124$ & $2.17872$  & $\beta^4 - 3\beta^3 + 2\beta^2 - 1$ \\
 \hline 
$13542, 35412, 41253, 43125, 54231$ & $2.2056$  & $\beta^3 - 2\beta^2 - 1$ \\
  \hline 
$53241$ & $2.2938$ & $\beta^4 - 3\beta^3 + 2\beta^2 - 2$ \\
 \hline 
$12543, 14253, 14325, 21543, 25431, 42153, 43215$  & $2.3247$ & $\beta^3 - 3\beta^2 + 2\beta - 1$ \\
 \hline 

$12534$ & $2.4142$ & $\beta^2 - 2\beta - 1$\\
\hline
$53421$ & $2.44868$ & $\beta^4 - 3\beta^3 + 2\beta^2 - 2\beta + 1$ \\
 \hline 
$53412$ & $2.47098$ & $\beta^4 - 2\beta^3 - \beta^2 - 1$ \\
 \hline 
$52413$ & $2.48753$ & $\beta^4 - 3\beta^3 + 2\beta^2 - \beta - 2$ \\
 \hline 
 $23451, 24513, 45123, 41235, 52341$ & $2.5214$ & $\beta^4 - 3\beta^3 + \beta^2 - 2\beta + 2$ \\
  \hline 
 $13452, 13524$ & $2.5214$ & $\beta^3 - 3\beta^2 + 2\beta - 2$ \\
  \hline 
 $41352, 42531, 31425, 34521$ & $2.6180$ & $\beta^2 - 3\beta + 1$ \\
  \hline 
$53142$ & $2.66577$ & $\beta^4 - 3\beta^3 + 2\beta + 1$ \\
 \hline 
 $34512$ & $2.7321$ & $\beta^2 - 2\beta - 2$ \\
  \hline 
$35124$ & $2.7693$ & $\beta^3 - 3\beta^2 + \beta - 1$ \\
\hline
$51243$ & $2.7769$ & $\beta^4 - 3\beta^3 + \beta + 2$ \\
 \hline 
$51234$ & $2.79714$ & $\beta^4 - 2\beta^3 - 2\beta^2 - \beta + 1$ \\
 \hline 
 $45132$ & $2.8312$ & $\beta^3 - 2\beta^2 - 2\beta -1$ \\
 \hline
 $35241$ & $2.8794$ & $\beta^3 - 3\beta^2 + 1$ \\
 \hline 
$24135$ & $2.8933$ & $\beta^3 - 3\beta^2 + \beta - 2$ \\
 \hline 
$12345, 12354$ & $3$ &$ \beta - 3$ \\
 \hline 
$54321$ & $3.23402$ & $\beta^4 - 4\beta^3 + 3\beta^2 - 2\beta + 1$ \\
\hline
$54312$ & $3.24262$ & $\beta^4 - 3\beta^3 - \beta^2 + \beta - 1$ \\
\hline
\end{tabular}}
\caption{The negative shift-complexity of all permutations of length $5$.}
\label{tab:5}
\end{table}

\begin{example}  For $\pi = 15237864$ as in Example \ref{nleven}, we had $t^{(m)} = 0101332(1332)^{2m} 1 \wmax$.  Therefore, $z_{[\ell, n-1]} q^\infty = (3213)^\infty$.  We obtain $$p_{(3213)^\infty}(x) = (-x)^4 -1 + 4(-x)^3 + 3(-x)^2 + 2(-x) + 4.$$ It follows from Theorem~\ref{thm:P} that $\B(\pi)$ is the largest real root of
$$P_{\pi}(\beta) = \beta^4  - 4\beta^3 + 3\beta^2 - 2\beta + 3,$$
and $\B(\pi) \approx 3.154$. 
\end{example}

For permutations in $\S_3$, we have $B(\pi) = 1$ if $\pi \in\{123, 132, 213, 231, 321\}$.   The remaining permutation $\pi = 312$ has $\B(\pi) \approx 1.618$, the largest real root of the polynomial $\P_{\pi}(\beta) = \beta^2 - \beta - 1$.  

Carrying out the computations for all $\pi\in\S_4$ and $\pi \in \S_5$ we obtain Tables~\ref{tab:4} and~\ref{tab:5}.

\section*{Acknowledgments}
The authors thank Wolfgang Steiner for useful comments and suggestions.
The first author was partially supported by grant \#280575 from the Simons Foundation and by grant H98230-14-1-0125 from the NSA.

\end{document}